\newcommand{\tikzAngleOfLine}{\tikz@AngleOfLine}
\def\tikz@AngleOfLine(#1)(#2)#3{%
\pgfmathanglebetweenpoints{%
\pgfpointanchor{#1}{center}}{%
\pgfpointanchor{#2}{center}}
\pgfmathsetmacro{#3}{\pgfmathresult}%
}
\newcommand{\cB}{\mathcal{B}}
\newcommand{\la}{\langle}
\newcommand{\ra}{\rangle}
\newcommand{\La}{\Lambda}
\newcommand{\cA}{\mathcal{A}}
\newcommand{\cS}{\mathcal{S}}
\newcommand{\End}{\operatorname{End}}
\newcommand{\Hom}{\operatorname{Hom}}
\newcommand{\gldim}{\operatorname{gldim}}
\newcommand{\findim}{\operatorname{findim}}
\newcommand{\Ext}{\operatorname{Ext}}
\newcommand{\Tor}{\operatorname{Tor}}
\newcommand{\add}{\operatorname{add}}
\newcommand{\pdim}{\operatorname{pdim}}
\newcommand{\m}{\!\operatorname{-mod}}
\newcommand{\proj}{\!\operatorname{-proj}}
\newcommand{\St}{\Delta}
\newcommand{\Cs}{\nabla}
\renewcommand{\L}{\Lambda}
\renewcommand{\l}{\lambda}
\renewcommand{\top}{\operatorname{top}}
\newcommand{\soc}{\operatorname{soc}}
\newcommand{\injdim}{\operatorname{idim}}
\newcommand{\domdim}{\!\operatorname{-domdim}}
\newcommand{\codomdim}{\!\operatorname{-codomdim}}
\newtheorem{numberingthm}{Theorem}[subsection] 
\theoremstyle{definition}
\newtheorem{Def}[numberingthm]{Definition}
\theoremstyle{plain}
\newtheorem{Prop}[numberingthm]{Proposition}
\newtheorem{Theorem}[numberingthm]{Theorem}
\newtheorem{Cor}[numberingthm]{Corollary}
\newtheorem{Lemma}[numberingthm]{Lemma}
\newtheorem{Remark}[numberingthm]{Remark}
\newtheorem{thmintroduction}{Theorem}
\newenvironment{Example}
{\pushQED{\qed}\example}
{\popQED\endexample}
\theoremstyle{remark}
\begin{document}

\baselineskip=14pt

\title[Homological dimensions of Schur algebras $S(p,2p)$]{Homological dimensions of Schur algebras $S(p,2p)$ and an Auslander-type correspondence}

\author[T. Cruz]{Tiago Cruz}
\address[Tiago Cruz]{Institut f\"ur Algebra und Zahlentheorie,
Universit\"at Stuttgart, Germany }

\email{tiago.cruz@mathematik.uni-stuttgart.de}

\author[K. Erdmann]{Karin Erdmann}
\address[Karin Erdmann]{Mathematical Institute,
   Oxford University,
       ROQ, Oxford OX2 6GG,
   United Kingdom}
\email{erdmann@maths.ox.ac.uk}
\subjclass[2020]{Primary: 16E10, 20G43 Secondary: 16G10, 20C30, 16E65, 18G25}
\keywords{Schur algebras, quotients of group algebras of symmetric groups, homological dimensions, Hemmer-Nakano dimension, relative dominant dimension, Auslander pairs, Specht and Young modules, quasi-hereditary cover}

\begin{abstract}
	We study the homological properties of Schur algebras $S(p, 2p)$ over a field $k$ of positive characteristic $p$, focusing on their interplay with the representation theory of quotients of group algebras of symmetric groups via Schur-Weyl duality. Schur-Weyl duality establishes that the centraliser algebra, $\Lambda(p, 2p)$, of the tensor space $(k^p)^{\otimes 2p}$ (as a module over $S(p, 2p)$) is a quotient of the group algebra of the symmetric group. In this paper, we prove that Schur-Weyl duality between $S(p, 2p)$ and $\Lambda(p, 2p)$ is an instance of an Auslander-type correspondence. 
	
	We compute the global dimension of Schur algebras $S(p, 2p)$ and their relative dominant dimension with respect to the tensor space $(k^p)^{\otimes 2p}$. In particular, we show that the pair $(S(p, 2p), (k^p)^{\otimes 2p})$ forms a relative $4(p-1)$-Auslander pair in the sense of Cruz and Psaroudakis, thereby connecting  Schur algebras with higher homological algebra. Moreover, we determine the Hemmer-Nakano dimension associated with the quasi-hereditary cover of $\Lambda(p, 2p)$ that arises from Schur-Weyl duality. As an application, we show that the direct sum of some Young modules over $\Lambda(p, 2p)$ is a full tilting module when $p>2$.
\end{abstract}

\maketitle

\section{Introduction}

The interplay between Schur algebras and symmetric group algebras, due to Schur–Weyl duality, occupies a central role in the representation theory of algebraic groups and related areas such as categorification and invariant theory. In the modular setting, that is, over a field $k$ of positive characteristic $p$, this relationship becomes even richer and more interesting, reflecting deep structural phenomena and properties such as the failure of semi-simplicity, the existence of Hemmer-Nakano type results, quasi-hereditary covers, and non-trivial cellular structures.

In recent years, the role and importance of homological algebra in phenomena like Schur-Weyl duality has been increasing either by the use of computation-free proofs via dominant dimensions (\cite{KSX}) but also via higher versions of this phenomena that appear in the form of quasi-hereditary covers in the sense of Rouquier \cite{Rou} (see for instance \cite{FK} and \cite{Cr2}).

At the intersection of homological algebra with representation theory lies the Auslander correspondence \cite{zbMATH03517355}: a bridge between algebras with nice homological properties called Auslander algebras and algebras of finite representation-type. Our goal in this paper is to connect certain quotients of groups algebras of symmetric groups on $2p$ letters and relate them with Schur algebras (in characteristic $p$) via a generalisation of the Auslander correspondence. Through this connection, we aim to gain insight into the homological structure of Schur algebras of $\operatorname{GL}_p$, the quasi-hereditary covers of these quotients and, on the quotients themselves.


\subsection*{Setup} 

Let $k$ be a field of positive characteristic $p$, and let $V$ be a $k$-vector space of dimension $n$. The symmetric group on $d$ letters $\cS_d$ acts on the right on the tensor space $V^{\otimes d}$, by place permutations, and the centraliser of this action is the Schur algebra $S_k(n, d)$. 
Schur algebras are quasi-hereditary (see for instance \cite{zbMATH04031957} or \cite{zbMATH04116809}), and so they enjoy nice homological properties like possessing finite global dimension (see also \cite{zbMATH04114884}).

If $n\geq d$, then $k\cS_d$ acts faithfully on $V^{\otimes d}$, and the structure of the Schur algebra $S(n, d)$ is relatively well understood (see for instance \cite{zbMATH00966941, FK, zbMATH06551164, HN, Gr, Do}). In contrast, the case $n<d$ is significantly more intricate.
	In general, the action of $\cS_d$ on the tensor space is no longer faithful, and as a result, the Schur-Weyl dual of $S_k(n, d)$ is not self-injective, leading to more subtle homological behavior that remains largely unexplored outside of special cases of the pair $(n, d)$ (see for instance \cite{P, EH, zbMATH02005565, zbMATH06334301, CE24}).  The centraliser of $V^{\otimes d}$ as an $S_k(n, d)$-module is the quotient algebra $\Lambda_k(n, d):=k\cS_d/I_n$, where $I_n$ is the annihilator of the $k\cS_d$-module $V^{\otimes d}$. The algebra $\Lambda_k(n, d)$ carries a cellular structure but little is known about its homological structure. In particular, it is not even known for which parameters $n, d$  the algebra $\Lambda_k(n, d)$ is Iwanaga-Gorenstein (that is, when the minimal injective resolution of the regular module has finite length).

The algebra $\Lambda_k(n, d)$ has a quasi-hereditary cover constructed from the Ringel dual $R_k(n, d)$ of $S_k(n, d)$, as shown in  \cite{Cr2}. Here $R_k(n, d)$ is the endomorphism algebra $\End_{S_k(n,d)}(T)^{op}$, where $T$ is the (basic) characteristic tilting module for $S_k(n, d)$.
The quasi-hereditary cover
comes equipped with a Schur functor
$$F_{n,d}\colon R_k(n, d)\m\rightarrow \L_k(n, d)\m
$$ which, when $p>2$, induces isomorphisms 
	$$\Ext_{R_k(n, d)}^j(M, N)\cong \Ext_{\La_k(n, d)}^j(F_{n, d}M, F_{n, d}N)$$ for every $0\leq j\leq i$ and all modules $M, N$ with a standard filtration for some $i$. The optimal such value $i\in \mathbb{N}\cup \{\infty\}$ is called the Hemmer-Nakano dimension of $\mathcal{F}(\St_{R_k(n, d)})$, where $\mathcal{F}(\St_{R_k(n, d)})$ stands for the subcategory of all modules having a standard filtration in the module category of $R_k(n, d)$. Finding this value in general would unravel the precise homological depth of Schur-Weyl duality.

When $n=2$, the algebras $\L_k(2, d)$ are Temperley-Lieb algebras and we have determined in \cite{CE24}  the Hemmer-Nakano dimension of $\mathcal{F}(\Delta_{R_k(2, d)})$ for
arbitrary $d$. This was done by using tools from the representation theory
of algebraic groups together with tools coming from abstract representation theory and homological algebra. In particular, we extensively used and expanded the relative theory of dominant dimension studied in \cite{Cr2}. In \cite{CP1}, the techniques of relative theory of dominant dimension were used to link the theory of higher Auslander algebras with tilting theory leading to the concept of relative Auslander pairs. Within this language, one of the main results of \cite{CE24} can be interpreted as asserting that $(S_k(2, d), V^{\otimes d})$ forms a relative Auslander pair. In \cite{CP2}, a generalisation of the Auslander correspondence was obtained for relative Auslander pairs. From \cite{CP2} and \cite{CE24}, we obtain then an Auslander-type correspondence between $q$-Schur algebras of the quantum group $G(2)$ and Temperley-Lieb algebras.

For $n>2$ the tools from the representation theory of algebraic groups are not as accessible as in the case $n=2$, and so the approach started in \cite{CE24} cannot be continued for the higher cases $n>2$. Instead, we  exploit that  the representations of the group algebra  $K\cS_{2p}$ are reasonably well understood (see \cite{zbMATH04125660, zbMATH07389875}). With them we illustrate that the situation for $n=p=2$ is not an isolated case and that there are more classes of Schur algebras with homological structures resembling higher Auslander algebras.

\newpage 
\subsection*{Main results}

In this paper, we determine (over any field with characteristic $p$):

\begin{itemize}
	\item  The global dimension of the Schur algebras $S_k(t, 2p)$ for $t=p, \ldots, 2p-1$ using homological properties of Specht modules, Young modules and Schur functors;
	\item The finitistic dimension of the algebras $\Lambda_k(p, 2p)$ for $p>2$.
	\item The Hemmer-Nakano dimension of $\mathcal{F}(\St_{R_k(p, 2p)})$.
	\item The relative dominant dimension of $S_k(p, 2p)$ with respect to $V^{\otimes 2p}$.
	\item An Auslander-type correspondence between the Schur algebras $S(p, 2p)$ and the algebras $\Lambda_k(p, 2p)$.
\end{itemize}

Indeed our main result can be summarised as follows:
\begin{thmintroduction}[see Theorems \ref{thm3:3:2} and \ref{maintheoremA}] \label{TheoremA}
	Let $k$ be an arbitrary field with positive characteristic $p$. Then, the following holds.
	\begin{enumerate}
		\item $\gldim S_k(t, 2p)=4(p-1)$ for every $t=p, \ldots, 2p-1$.
		\item $(S_k(p, 2p), V^{\otimes 2p})$ is a relative $4(p-1)$-Auslander pair, that is, the relative dominant dimension of $S_k(p, 2p)$ with respect to $V^{\otimes 2p}$ is equal to $\gldim S_k(p, 2p)=4(p-1)$.
		\item The algebra $\Lambda_k(p, 2p)$ is Iwanaga-Gorenstein with infinite global dimension and with finitistic dimension equal to $2p-4$ when $p>2$.
	\end{enumerate}
\end{thmintroduction}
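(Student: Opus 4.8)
The whole statement is controlled by the block decomposition of $S_k(t,2p)$ and of $k\cS_{2p}$. A partition of $2p$ has $p$-weight $0$, $1$ or $2$: the weight-$0$ blocks are semisimple, the weight-$1$ blocks of $S_k(t,2p)$ have a uniform, explicitly known presentation as a quasi-hereditary ``zig-zag'' algebra of global dimension $2(p-1)$, and the only genuinely new object is the unique weight-$2$ block, the one with empty $p$-core. The plan is therefore to pin down the homological behaviour of this weight-$2$ block, and of the corresponding weight-$2$ block of $k\cS_{2p}$, using the well-understood modular representation theory of $\cS_{2p}$ in weights $\le 2$ --- decomposition numbers and explicit projective resolutions of Specht modules, cf.\ \cite{zbMATH04125660, zbMATH07389875} --- and to feed this into the relative theory of dominant dimension of \cite{Cr2}.

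For parts (1) and (2) I would establish the equality of part (2) first and deduce part (1). As an $S_k(p,2p)$-module, $V^{\otimes 2p}$ is a (partial) tilting module (a tensor power of the defining tilting module $V$); it lies in $\mathcal{F}(\Delta)$, it is a generator over $\Lambda_k(p,2p)=\End_{S_k(p,2p)}(V^{\otimes 2p})$, and so $S_k(p,2p)$ is an Auslander-type algebra of $\Lambda_k(p,2p)$ relative to $V^{\otimes 2p}$ in the sense of \cite{Cr2}, so that the relative dominant dimension $\operatorname{domdim}_{(S_k(p,2p),\,V^{\otimes 2p})}$ is defined. By the relative analogue of M\"uller's characterization of dominant dimension (\cite{Cr2}) this number is read off from explicit $\Ext$-vanishing conditions relating $V^{\otimes 2p}$ over $\Lambda_k(p,2p)$ to the $\Delta$-filtered structure of the regular $S_k(p,2p)$-module; carrying out this computation block by block --- the weight-$\le 1$ contributions being immediate --- shows that the required $\Ext$-groups vanish through degree $4(p-1)$, so $\operatorname{domdim}_{(S_k(p,2p),\,V^{\otimes 2p})}\ge 4(p-1)$. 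On the other hand the relative dominant dimension never exceeds the global dimension, and $\gldim S_k(p,2p)\le \gldim S_k(2p,2p)=4(p-1)$ --- the value of the right-hand side being known by \cite{zbMATH00966941}, and the inequality following from the monotonicity of global dimension under the idempotent truncations $S_k(t,2p)=e\,S_k(t+1,2p)\,e$, whose weights form a coideal in the dominance order (alternatively the bound $\le 4(p-1)$ can be obtained directly, by estimating the projective dimension of each Weyl module in the weight-$2$ block by $2(p-1)$). Hence all three numbers equal $4(p-1)$; this proves part (2) and part (1) for $t=p$, and for $p<t<2p$ one sandwiches
\[
4(p-1)=\gldim S_k(p,2p)\le \gldim S_k(t,2p)\le \gldim S_k(2p,2p)=4(p-1),
\]
again using truncation monotonicity.

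For part (3) assume $p>2$. For $\Lambda_k(p,2p)$ the weight-$0$ blocks are semisimple and the weight-$1$ blocks are self-injective Brauer-tree algebras (they are cut out unchanged from $k\cS_{2p}$), so part (3) is once more a statement about the weight-$2$ block $B$. I would compute the minimal injective coresolution of $B$ and the projective dimensions occurring over $B$ directly from the explicit structure of the weight-$2$ block of $k\cS_{2p}$ together with a description of the annihilator $I_p$: this shows that $B$ has finite injective dimension on both sides, so $\Lambda_k(p,2p)$ is Iwanaga-Gorenstein, and $\findim \Lambda_k(p,2p)=\findim B$ equals the common injective dimension, which the computation pins at $2p-4$. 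The non-vanishing of a suitable $\Ext$-group in degree $2p-3$ then rules out finite global dimension --- for a Gorenstein algebra of dimension $2p-4$ a finite global dimension would have to equal $2p-4$ --- so $\gldim \Lambda_k(p,2p)=\infty$. The role of the hypothesis $p>2$ is that only then does the Schur functor $F_{p,2p}$ from the (quasi-hereditary, hence finite-global-dimensional) Ringel dual $R_k(p,2p)$ induce $\Ext$-isomorphisms on $\Delta$-filtered modules up to the relevant Hemmer--Nakano dimension, which is what allows the homological data of $R_k(p,2p)$ to be transported to $\Lambda_k(p,2p)$ and keeps its finitistic dimension finite.

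The main obstacle lies throughout in the weight-$2$ block: one needs the precise projective and injective resolutions of the standard and Specht modules there, and a precise understanding of how the tilting summands of $V^{\otimes 2p}$ and the ideal $I_p$ interact with the heredity chain. Establishing the lower bound $\operatorname{domdim}_{(S_k(p,2p),\,V^{\otimes 2p})}\ge 4(p-1)$ --- equivalently, the long stretch of $\Ext$-vanishing over $\Lambda_k(p,2p)$ --- together with the exact value $2p-4$ of the finitistic dimension, is the technical heart; the remaining arguments are bookkeeping with quasi-hereditary structure and idempotent truncation.
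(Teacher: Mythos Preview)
Your overall architecture matches the paper's: reduce to the weight-$2$ block, compute the relative dominant dimension there via Ext-vanishing over $\Lambda_k(p,2p)$ using the relative M\"uller criterion, and sandwich the global dimension. Two points need correction, however.

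First, the inequality $\gldim S_k(p,2p)\le\gldim S_k(2p,2p)$ is not a formal consequence of idempotent truncation: in general $\gldim eAe$ is \emph{not} bounded by $\gldim A$, even for coideal idempotents in quasi-hereditary algebras. The paper establishes the needed inequality $\gldim S_k(p,2p)\le\gldim S_k(t,2p)$ by a specific argument: multiplication by the idempotent sends an $\add T_t$-resolution of the costandard modules to one over $S_k(p,2p)$, whence $\pdim T_p\le\pdim T_t$, and then one invokes the Mazorchuk--Ovsienko formula $\gldim=2\pdim T$ for quasi-hereditary algebras with duality. Alternatively (and this is what the paper does first), one bounds $\gldim A_0$ directly by constructing an explicit $\add Q$-coresolution of the singular Specht module $S^{\langle 2,1\rangle}$ of length $2(p-1)$, which yields $\injdim_{A_0}T_0\le 2(p-1)$ and hence $\gldim A_0\le 4(p-1)$. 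Your proposal needs one of these two arguments; the blanket appeal to ``truncation monotonicity'' is a gap.

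Second, your description of the weight-$1$ blocks of $\Lambda_k(p,2p)$ is wrong: they are \emph{not} cut out unchanged from $k\cS_{2p}$ and are \emph{not} self-injective Brauer tree algebras. Some partitions in a weight-$1$ block of $k\cS_{2p}$ have more than $p$ parts, so the quotient by $I_p$ is proper; in fact these blocks of $\Lambda_k(p,2p)$ are quasi-hereditary (they are Ringel duals of the corresponding blocks of $S_k(p,2p)$) and have finite global dimension at most $2(p-2)$. This does not derail your Iwanaga--Gorenstein conclusion, but it does mean the infinite global dimension of $\Lambda_k(p,2p)$ must come from the principal block alone. The paper deduces this from cellularity: $\Lambda_k(p,2p)$ is cellular with one more cell module than simple modules, hence not quasi-hereditary, hence of infinite global dimension by the K\"onig--Xi criterion. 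Your alternative route via a non-vanishing Ext in degree $2p-3$ is also viable, but you should locate the argument in the principal block.
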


In particular, we obtain a new proof to \citep[Theorem 5.9]{zbMATH02005565} in the case $m=2$. 
A consequence of (2) is that the theory developed in \cite{CP2} applies to our situation and so there is an Auslander-type correspondence giving a duality between the pair $(S_k(p, 2p), V^{\otimes 2p})$ and the pair $(\L_k(p, 2p), V^{\otimes 2p})$. Further, it illustrates that the Schur algebras $S_k(p, 2p)$ are the homological side of $\L_k(p, 2p)$ while the algebras $\L_k(p, 2p)$ are the combinatorial side of $S_k(p, 2p)$. 
As an application of (2), we obtain from the relative theory of dominant dimension that the Hemmer-Nakano dimension of $\mathcal{F}(\Delta_{R_k(p, 2p)})$ is exactly $4(p-1)-2=4p-6$. Thus, it follows that $R_k(p, 2p)$ (together with $F_{p, 2p}$) is the best quasi-hereditary cover of $\Lambda_k(p, 2p)$ that sends standard modules to the Specht modules of $\Lambda_k(p, 2p)$. A non-immediate application of (2) is that the direct sum of Young modules labelled by partitions of $2p$ distinct from $(2p, 0, \cdots)$ is a full tilting module over $\L_k(p, 2p)$. Thus, (2) unravels another duality between $S_k(p, 2p)$ and $\L_k(p, 2p)$ through the tensor space. Indeed, $V^{\otimes 2p}$ is a direct summand of a full tilting module as an $S_k(p, 2p)$-module (in the sense of abstract representation theory) while it contains a full tilting module as direct summand as $\L_k(p, 2p)$-module.  Our approach further clarifies and gives further evidence why the Schur algebras $S(p, mp)$ stand out among those of the form $S(n, d)$ with $n<d$, exhibiting better homological properties and appearing to be among the more tractable cases, second only to the well-understood Schur algebras $S(n, n)$.

Our strategy to tackle Theorem \ref{TheoremA} is based on explicit computations of projective resolutions of non-projective Young modules in the principal block of $\Lambda_k(p, 2p)$ as well as coresolutions of certain Specht modules by Young modules. A key ingredient is that $k\cS_{2p-1}$ is of finite representation-type and much of the cellular structure of $k\cS_{2p}$ (that is, the Specht modules and Specht filtrations) can be obtained by inducing from $k\cS_{2p-1}$. In particular, (see for example \cite{zbMATH07389875} which goes back to \cite{zbMATH04125660}), the indecomposable projective modules can be completely described in terms of Specht filtrations. We then apply  techniques of relative theory of dominant dimension, combined with the Schur functor $F_{p, 2p}$ and the simple preserving duality functors in the module categories of $S_k(p, 2p)$ and $\Lambda_k(p, 2p)$ to deduce Theorem \ref{TheoremA}.
As it is common for Schur algebras and Hemmer-Nakano dimensions we treat the cases $p=3$ and $p>3$ separately, however, the underlying methodology and framework remain the same for both cases. The case $p=2$ is not discussed throughout since this was covered mainly in \cite{CE24}.

\subsection*{Outline of the paper} In Subsection \ref{Homological dimensions}, we discuss the homological dimensions that will be used throughout the paper and their associated notation. In Subsection \ref{Auslander pairs}, we give a summary of the properties that relative Auslander pairs have, while in Subsection \ref{Quasihereditaryalgebras}, we provide a short summary for the elementary properties of quasi-hereditary algebras.
In Subsection \ref{sec2dot3}, we give a historical account on the Schur functor $F_{n, d}$ and recall the definitions and notation for the Young and the Specht modules. 
In Subsection \ref{theprincipalblockS2p}, we review the modular representation theory of the principal block of $k\cS_{2p}$ over an algebraically closed field, recalling results presented in \cite{zbMATH07389875}. In particular, we exhibit the Specht filtrations of the indecomposable projective modules and we recall the abacus notation for the partitions to be used throughout. In Section \ref{SpechtandYoungmodulesoverLambda}, we describe the Young and Specht modules of the principal block of $\L_k(p, 2p)$ over algebraically closed fields with positive characteristic distinct from two. In Subsection \ref{reducing the non injective}, we give a bijection between non-injective projective $\L_k(p, 2p)$-modules and non-injective Young $\L_k(p, 2p)$-modules, giving as a byproduct the projective dimension of almost all Young modules. In Theorem \ref{thm3:3:2}, we prove that the principal block of $\Lambda_k(p, 2p)$ is Iwanaga-Gorenstein and we compute its finitistic dimension when $p>3$. In Subsection \ref{coresolvingsingularSpechtmodule}, we construct a coresolution of the singular Specht module by Young modules for the case $p>3$. In Section \ref{Cohomological properties of Young and Specht modules}, we determine the relative dominant dimension of $S_k(p, 2p)$ with respect to the tensor space (see Theorem \ref{maintheoremarbitraryfield}) and deduce as by result that almost all Young modules are self-orthogonal. In particular, in Subsection \ref{tiltingmodulebuiltfromYoungmodules}, we make use of relative dominant dimension to show that the direct sum of all non-semisimple Young modules is a full tilting module in the sense of abstract representation theory (see Theorem \ref{tiltingtheorem}). In Subsection \ref{sec4dot3}, we address the previous results in characteristic three. In Section \ref{homologicalpropertiesofschuralgebras}, we present the main results of the paper. In Theorem \ref{quasihereditarycover}, we compute the Hemmer-Nakano dimension of $\mathcal{F}(\St_{R_k(n, d)})$. In Corollary \ref{simpleYoungmoduledimension}, we show that the simple Young module is the only one with infinite projective dimension. In Subsubsection \ref{quasipreclustertiltingmodule}, we compute the global dimension of the principal block of $S_k(p, 2p)$ over algebraically closed fields, by showing that this block fits into an Auslander pair. In Subsection \ref{globaldimensionofSp2p}, we determine the global dimension of $S_k(p, 2p)$ by showing that the maximal length of projective resolutions occurs in the principal block and we obtain the projective dimension of the tensor space $V^{\otimes 2p}$. By combining the former with Totaro's work \cite{zbMATH00966941}, we conclude that all Schur algebras between $S_k(2p-1, 2p)$ and $S_k(p, 2p)$ have the same global dimension (see Corollary \ref{cor5dot3dot5}). In Subsection \ref{globaldimensionoverarbitraryfields}, we transfer these results on global dimension from Schur algebras over algebraically closed fields to Schur algebras over arbitrary fields proving our main result.

\section{Preliminaries}\label{Preliminaries}
Let $A$ be a finite-dimensional algebra over a field $k$.
 We denote by $A\m$ the category of finitely generated (left) $A$-modules. 
Given $M\in A\m$, we denote by $\add_A M$ (or just $\add M$) the full subcategory
of $A\m$ whose modules are direct summands of a finite direct
sum of copies of $M$. We also denote $\add A$ by $A\proj$. 

The endomorphism algebra of a  module $M\in A\m$ is denoted by ${\rm End}_A(M)$.
We denote by $D$ the \emph{standard duality} functor 
$\Hom_k(-, k): A\m \rightarrow A^{op}\m$ where $A^{op}$ is the opposite algebra of $A$. 
Let $A'$ be a subalgebra of $A$. Given $M\in A\m$, we write  $M\downarrow_{A'}$ to denote the $A'$-module obtained by restriction of scalars. Given $N\in A'\m$ we write $M\uparrow^A$ to denote the induced $A$-module $A\otimes_{A'} M$.
Given $M\in A\m$ we write $\Omega_A^i M$ (or simply $\Omega^i M$) to denote the $i$-th syzygy of $M$ for all $i\in \mathbb{Z}$. We say that a module $M$ is \emph{multiplicity-free} if it decomposes as a direct sum $M=\oplus M_i$   indecomposable modules $M_i$, which are pairwise non-isomorphic.

\subsection{Homological dimensions} \label{Homological dimensions}
We will write $\pdim_A M$, $\injdim_A M$ and $\gldim A$ to denote the projective dimension of $M$, the injective dimension of $M$ and the global dimension of $A$, respectively. We say that an algebra $A$ is \emph{Iwanaga-Gorenstein} if $\injdim_A A$ and $\injdim_{A^{op}} A^{op}$ are finite. In particular, algebras of finite global dimension are Iwanaga-Gorenstein. For Iwanaga-Gorenstein algebras $A$, it is known that $\injdim_A A=\injdim_{A^{op}} A^{op}$ (see for example \cite{zbMATH00067398}).  Moreover, for those, the finitistic dimension $$\findim A:=\sup\{\pdim_A M\colon M\in A\m \text{ with } \pdim_A M<+\infty\}$$ coincides with $\injdim_A A=\injdim_{A^{op}} A^{op}$ (see \citep[VI, Lemma 5.5]{ARS}).
In particular, for algebras of finite global dimension, we have $\injdim_A A=\gldim A$.

  Given $X\in A\m$, we denote by $X^\perp$ the full subcategory $${\{M\in A\m\colon \Ext_A^{i>0}(Z, M)=0, \forall Z\in \add X \}},$$ and by ${}^\perp X$ the full subcategory $\{M\in A\m\colon \Ext_A^{i>0}(M, Z)=0,  \forall Z\in \add X \}$. In addition, given $r\in \mathbb{N}$ we denote by $X^{\perp_r}$ the full subcategory $${\{M\in A\m \colon \Ext_A^{i}(Z, M)=0, \forall Z\in \add X, \  i=1, \ldots, r \}}.$$ Analogously, we write ${}^{\perp_r} X$.

We say that an  $A$-module $Q$ is \emph{self-orthogonal} if $Q\in Q^\perp$. 

 Given two $A$-modules $Q$ and $M$, the \emph{relative dominant dimension} of $M$ with respect to $Q$ is the value $Q\domdim_A M\in \mathbb{N}\cup\{0, +\infty\}$ defined as the supremum of all $n\in \mathbb{N}$ such that there exists an exact sequence \begin{align}
 	0\rightarrow M\rightarrow Q_1\rightarrow Q_2\rightarrow \cdots \rightarrow Q_n \label{equ1}
 \end{align} which remains exact under $\Hom_A(-, Q)$ and $Q_i\in \add Q$ for $i=1, \ldots, n.$ Dually, the \emph{relative codominant dimension} of $M$ with respect to $Q$ is the value $$Q\codomdim_A M:=DQ\domdim_{A^{op}} DM.$$

 Observe that computations of relative dominant dimensions can be reduced to computations involving only multiplicity-free modules. Indeed, if $\add Q=\add Q'$, then it is clear by definition that $Q\domdim_A M=Q'\domdim_A M$ for every $M\in A\m$. Moreover, if $M, N\in A\m$ with $\add M=\add N$, then $Q\domdim_A M=Q\domdim_A N$ (see for instance \citep[Corollary 3.1.9]{Cr2}).
 
 With the following, we can see these computations can be reduced to basic algebras and multiplicity-free modules.
 
  \begin{Lemma}\label{lemma2dotonedotone}
 	Let $F\colon A\m\rightarrow B\m$ be an equivalence of categories and $M, Q\in A\m$. Then $Q\domdim_A M=FQ\domdim_B FM$.
 \end{Lemma}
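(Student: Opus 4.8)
The plan is to transport an admissible exact sequence through the equivalence and through a quasi-inverse. Recall that any equivalence of abelian categories is additive and exact and reflects exactness, and that $F$ carries $\add_A Q$ onto $\add_B FQ$: if $X\in\add Q$ then $X$ is a direct summand of $Q^{\oplus m}$ for some $m$, so $FX$ is a direct summand of $(FQ)^{\oplus m}$, and the converse holds by applying a quasi-inverse $G$ of $F$. Moreover, $F$ induces for all $X,Y\in A\m$ a natural isomorphism $\Hom_A(X,Y)\xrightarrow{\sim}\Hom_B(FX,FY)$ compatible with composition; consequently, for any complex $C^\bullet$ of $A$-modules and any $Z\in A\m$, the complex $\Hom_A(C^\bullet,Z)$ is exact if and only if $\Hom_B(FC^\bullet,FZ)$ is exact.

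First I would prove $Q\domdim_A M\le FQ\domdim_B FM$. If $Q\domdim_A M=0$ there is nothing to show, so suppose there is an exact sequence $0\rightarrow M\rightarrow Q_1\rightarrow\cdots\rightarrow Q_n$ with $Q_i\in\add Q$ which remains exact under $\Hom_A(-,Q)$. Applying the exact functor $F$ yields an exact sequence $0\rightarrow FM\rightarrow FQ_1\rightarrow\cdots\rightarrow FQ_n$ with each $FQ_i\in\add FQ$ by the remark above. By the natural isomorphism recorded above, applying $\Hom_B(-,FQ)$ to this sequence produces a complex isomorphic to the one obtained by applying $\Hom_A(-,Q)$ to the original sequence, hence exact by hypothesis. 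Therefore the new sequence is admissible and witnesses $FQ\domdim_B FM\ge n$; taking the supremum over all such $n$ gives the inequality.

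For the reverse inequality, let $G\colon B\m\rightarrow A\m$ be a quasi-inverse of $F$, again an equivalence of categories. Applying the inequality already proved to $G$, $FM$ and $FQ$ gives $FQ\domdim_B FM\le GFQ\domdim_A GFM$. Since $GF\cong\id_{A\m}$, we have $GFM\cong M$ and $GFQ\cong Q$, so $\add GFM=\add M$ and $\add GFQ=\add Q$; by the observation recorded just before this lemma (that $Q\domdim_A M$ depends only on $\add Q$ and $\add M$), we obtain $GFQ\domdim_A GFM=Q\domdim_A M$. Combining the two inequalities yields the claimed equality. There is no substantive obstacle here: the only thing to be careful about is the bookkeeping that an equivalence preserves and reflects $\add$-closures, exactness, and $\Hom$-spaces, and that the passage through $G$ only returns $M$ and $Q$ up to isomorphism, which is why the preliminary observation on $\add$-invariance is invoked.
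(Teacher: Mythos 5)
Your proof is correct and takes essentially the same route as the paper: apply $F$ to an admissible coresolution, using that an equivalence preserves $\add$-closures, exactness, and $\Hom$-spaces, to get one inequality, and then obtain the reverse. The paper leaves the converse inequality as ``analogous,'' whereas you make it explicit via the quasi-inverse $G$ together with the $\add$-invariance observation preceding the lemma; this is a reasonable way to unpack the same idea and does not constitute a different argument.
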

 \begin{proof}
 	Assume that $Q\domdim_A M\geq n$. Then, there exists an exact sequence of the form (\ref{equ1}), say $\delta$ and $\Hom_A(\delta, Q)$ is exact. It is clear that $F\delta$ is exact and $\Hom_B(F\delta, FQ)\cong \Hom_A(\delta, Q)$ is also exact. Hence, $FQ\domdim_B FM\geq Q\domdim_A M$. The converse inequality is analogous.
 \end{proof}
 
  If $Q$ is a faithful projective-injective $A$-module, then $Q\domdim_A A$ is exactly the \emph{classical dominant dimension} of $A$ (see for instance \citep[7.7]{zbMATH03425791}). Indeed, \cite{zbMATH03425791} is a standard reference for the study of classical dominant dimension.
For a detailed exposition on properties of relative dominant and codominant dimensions, we refer to \cite{Cr2}. See also \citep[Section 3]{CE24}. 

Although most results in \cite{Cr2} are presented using Tor groups, in our setup of finite-dimensional algebras over a field, they can be translated to Ext groups using the following identity.

\begin{Lemma}\label{lemma2dot1dot2}
	 We have $\Ext_A^i(M, N)\cong D\Tor_i^A(DN, M)\cong D\Tor_i^A(DM, N)\cong \Ext_{A^{op}}^i(DN, DM)$ for every $i\geq 0$ and $M, N\in A\m$. 
\end{Lemma}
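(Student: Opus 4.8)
The plan is to prove all four isomorphisms by establishing two basic facts and then chaining them together. First I would recall the standard adjunction/tensor-hom identity: for finite-dimensional $k$-algebras and finitely generated modules, $\Hom_A(M, N)\cong D\Tor_0^A(DN, M)$, which is just the statement $\Hom_A(M,N)\cong \Hom_k(DN\otimes_A M, k)$, i.e. the duality between $\Hom$ and $\otimes$ over $A$ combined with the standard $k$-duality $D$. This is exact and natural in both variables, so I would then derive it: take a projective resolution $P_\bullet\to M$ of $M$ as a left $A$-module. Since $D$ is an exact contravariant functor (as $k$ is a field) and $\Hom_A(-,N)$ applied to projectives agrees with $D(DN\otimes_A -)$ applied to projectives via the identity above, comparing the two complexes $\Hom_A(P_\bullet, N)$ and $D(DN\otimes_A P_\bullet)$ gives $\Ext_A^i(M,N)\cong D\Tor_i^A(DN, M)$ for all $i\geq 0$. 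This is the first isomorphism.

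For the second isomorphism $D\Tor_i^A(DN, M)\cong D\Tor_i^A(DM, N)$, I would invoke the standard symmetry of $\Tor$: for a right $A$-module $X$ and a left $A$-module $Y$ one has $\Tor_i^A(X, Y)\cong \Tor_i^A(Y, X)$ where on the right $Y$ is regarded as a right module over $A^{op}$ and $X$ as a left module over $A^{op}$ — more precisely, $\Tor$ computed from either side agrees, so $\Tor_i^A(DN, M)\cong \Tor_i^{A^{op}}(M^\circ, (DN)^\circ)$, but the cleanest route is to note that $DN$ is naturally a right $A$-module and $M$ a left $A$-module, while $DM$ is a right $A$-module and $N$ a left $A$-module; applying the first isomorphism already proved, but now over $A^{op}$, to the pair $(DN, DM)$ (both being modules over appropriate sides) will let me rewrite everything symmetrically. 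Concretely: $\Ext_{A^{op}}^i(DN, DM)\cong D\Tor_i^{A^{op}}(D(DM), DN) = D\Tor_i^{A^{op}}(M, DN)$, and since $\Tor$ over $A$ and over $A^{op}$ of the corresponding bimodule-free pairs coincide, $\Tor_i^{A^{op}}(M, DN)\cong \Tor_i^A(DN, M)$. Combining, $\Ext_A^i(M,N)\cong \Ext_{A^{op}}^i(DN, DM)$, which is the fourth isomorphism, and running the first isomorphism again in the other order gives $D\Tor_i^A(DM, N)\cong \Ext_{A^{op}}^i(DN, DM)$ as well, closing the loop.

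So the structure of the write-up would be: (1) prove $\Ext_A^i(M,N)\cong D\Tor_i^A(DN,M)$ via a projective resolution of $M$ and exactness of $D$, using the degree-zero identity $\Hom_A(P,N)\cong D(DN\otimes_A P)$ for $P$ projective; (2) observe $\Tor_i^A(DN, M)\cong \Tor_i^A(DM, N)$ — I would phrase this as: both compute $H_i$ of $P_\bullet^M\otimes_A DN$ versus using a projective resolution of $DN$ as a right module, or simply cite that $D$ intertwines an injective coresolution of $N$ with a projective resolution of $DN$, so that $\Ext_A^i(M,N)$ computed via an injective coresolution of $N$ dualizes to $\Tor$ of $DM$ against $N$'s dual data; (3) conclude the chain of four isomorphisms. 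The main obstacle — really the only subtlety — is bookkeeping of left versus right module structures and which algebra ($A$ or $A^{op}$) each $\Tor$ is taken over; the actual homological content is entirely standard (exactness of $D$ over a field, the tensor-hom adjunction, and balancing of $\Tor$). I would keep the proof short, essentially: ``This follows from the exactness of $D$ together with the natural isomorphism $\Hom_A(P, N)\cong D(DN\otimes_A P)$ for projective $P$, applied to a projective resolution of $M$; the remaining isomorphisms follow by symmetry of $\Tor$ and by applying the same argument over $A^{op}$.''
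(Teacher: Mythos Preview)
Your proposal is correct and takes essentially the same approach as the paper: the paper's proof is the one-liner ``This is a consequence of Tensor-Hom adjunction and $D$ being an exact contravariant functor,'' which is exactly the content you unpack via the degree-zero identity $\Hom_A(P,N)\cong D(DN\otimes_A P)$, a projective resolution of $M$, and symmetry of $\Tor$. Your final summary sentence is in fact almost verbatim the paper's entire proof.
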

\begin{proof}
	This is a consequence of Tensor-Hom adjunction and $D$ being an exact contravariant functor.
\end{proof}

Lastly, we need the following relative dimension.
Given an $A$-module $Q$,
we say that a left $A$-module $M$ has \emph{relative $\add Q$-dimension}  $m$ if there exists an exact sequence $$0\rightarrow Q_m\rightarrow Q_{m-1}\rightarrow\cdots \rightarrow Q_0\rightarrow M\rightarrow 0$$ which remains exact under $\Hom_A(Q, -)$ with $Q_i\in \add Q$ and $m$ is the minimal non-negative integer with this property. In such a case, we write $\dim_{\add Q} M=m$. If no such $m$ exists, we write $\dim_{\add Q} M=\infty$.

\subsection{Auslander pairs} \label{Auslander pairs}
The representation theory of Auslander algebras, introduced in \cite{zbMATH03517355}, encodes the representation theory of finite-dimensional algebras of finite representation-type and they have very nice homological properties (see also \citep[VI. 5]{ARS}). These are exactly the algebras satisfying the condition $$ \gldim A \leq 2\leq Q\domdim A$$ for a faithful projective-injective $A$-module $Q$. Iyama in \cite{Iyama07} generalised Auslander algebras to higher Auslander algebras by replacing $2$ by another number $n\in \mathbb{N}\setminus\{1, 2\}$.
Auslander pairs introduced in \cite{CP1} take this concept one step further, by weakening the conditions on $Q$.

\begin{Def}
	Let $Q$ be an $A$-module and $n\in \mathbb{N}$. The pair $(A, Q)$ is called an \emph{$n$-Auslander pair} if it satisfies $\gldim A\leq n\leq Q\domdim_A A$.
\end{Def}

Thus, Auslander algebras and higher Auslander algebras are examples of Auslander pairs (the cases where $Q$ is a faithful projective-injective module).
A classic example of an Auslander algebra is the Auslander algebra of $k[x]/(x^2)$, $\End_{k[x]/(x^2)}(k[x]/(x^2)\oplus k)$. Other toy examples of higher Auslander algebras with global dimension $n$ are for instance the Nakayama algebras with $n+1$ simple modules and with Kupisch series $[2, 2, \ldots, 2, 1].$ Examples of relative $n$-Auslander pairs that are not higher Auslader algebras can be found for instance in \cite{CE24} and \cite{CP1}.

 The following observation implies that we can always assume that $A$ is basic and $Q$ is mutiplicity-free.

\begin{Lemma}
	Morita equivalences preserve $n$-Auslander pairs.
\end{Lemma}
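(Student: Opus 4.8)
The statement to prove is that Morita equivalences preserve $n$-Auslander pairs. This is a short, essentially formal consequence of Lemma \ref{lemma2dotonedotone} together with the standard fact that Morita equivalences preserve global dimension.

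The plan is as follows. Suppose $F\colon A\m\rightarrow B\m$ is an equivalence of categories and $(A, Q)$ is an $n$-Auslander pair, so that $\gldim A\leq n\leq Q\domdim_A A$. I want to show that $(B, F(_AA))$ — or rather, since $F$ need not send the regular module to the regular module, one should really phrase this as: if $(A,Q)$ is an $n$-Auslander pair then so is $(B, Q')$ where $Q'$ is the image under $F$ of the module $Q$, with $B$ playing the role of $A$. First I would recall that an equivalence of module categories preserves projective objects, injective objects, exact sequences, and $\Ext$-groups, hence $\gldim B=\gldim A\leq n$. Next, since $F$ is an equivalence, $F(_AA)$ is a projective generator of $B\m$, so $\add_B F(_AA)=\add_B(_BB)=B\proj$; in particular $Q'\domdim_B F(_AA)=Q'\domdim_B B$ by the remark preceding Lemma \ref{lemma2dotonedotone} (relative dominant dimension depends only on the additive closure of the module in the second argument). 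Finally, applying Lemma \ref{lemma2dotonedotone} with $M=_AA$ and the module $Q$ gives $Q\domdim_A A = FQ\domdim_B F(_AA) = Q'\domdim_B B \geq n$. Combining the two inequalities shows $(B, Q')$ is an $n$-Auslander pair.

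The only mild subtlety — and the thing I would be careful to state correctly — is that the statement should be read as: for any equivalence $F\colon A\m\rightarrow B\m$, the pair $(A,Q)$ is an $n$-Auslander pair if and only if $(B, FQ)$ is. There is nothing deep here: global dimension is a categorical invariant of the module category, and relative dominant dimension transports along equivalences by Lemma \ref{lemma2dotonedotone}, once one observes that $F(_AA)$ generates the projectives of $B\m$ so that $FQ\domdim_B F(_AA)=FQ\domdim_B B$. I do not anticipate any real obstacle; the proof is a two-line combination of Lemma \ref{lemma2dotonedotone}, the invariance of global dimension under equivalence, and the additive-closure reduction already noted in the text. The corollary that one may assume $A$ basic and $Q$ multiplicity-free then follows by taking $F$ to be the Morita equivalence with the basic algebra of $A$ and noting that $FQ$ has the same indecomposable summands up to multiplicity, which can be discarded without changing relative dominant dimension.
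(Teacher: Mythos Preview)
Your proposal is correct and follows essentially the same approach as the paper: invariance of global dimension under Morita equivalence, then Lemma~\ref{lemma2dotonedotone} together with the observation that $\add_B F({}_AA)=B\proj$ so that $FQ\domdim_B F({}_AA)=FQ\domdim_B B$. The paper phrases the last step as ``relative dominant dimension does not see multiplicities,'' which is exactly your additive-closure reduction.
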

\begin{proof}
	 Indeed, assume that  $F\colon A\m\rightarrow B\m$ is a Morita equivalence. It
	is well known that global dimension is invariant under Morita equivalences, so $\gldim A=\gldim B$.  By Lemma \ref{lemma2dotonedotone}, and since relative dominant dimension does not see multiplicities, we obtain that $Q\domdim_A A=FQ\domdim_B FA=FQ\domdim_B B$. Hence $(A, Q)$ is an $n$-Auslander pair if and only if $(B, FQ)$ is an $n$-Auslander pair.
\end{proof}

If $(A, Q)$ is an $n$-Auslander pair with $n\geq 2$ and $Q$ is a self-orthogonal module with projective dimension $t$ and injective dimension $r$, then $Q$ as module over its endomorphism algebra is a $t$-quasi-generator and $r$-quasi-cogenerator (see \cite{MTcorrespondence} and \cite{CP2}). 

A module $M$ is called an \emph{$t$-quasi-generator} as an $A$-module if $t$ is the minimal non-negative integer such that there exists an exact sequence $0\rightarrow A\rightarrow M_0\rightarrow M_1\rightarrow \cdots M_t\rightarrow 0$ which remains exact under $\Hom_A(-, M)$ and $M_i\in \add M$. When $t=0$, we recover the concept of generator. A module is an \emph{$r$-quasi-cogenerator} as an $A$-module if its dual is an $r$-quasi-generator as an $A^{op}$-module.

In \cite{CP2} a  correspondence was obtained between Auslander pairs $(A, Q)$ with $$\gldim A\geq \pdim_A Q+\injdim_A Q+2$$ and pairs $(B, M)$ where $M$ is a \emph{$(n, t, r)$-quasi-precluster tilting module} as a $B^{op}$-module satisfying $\add M={}^{\perp_r} M\cap M^{\perp_{n-r-2}}\subset B^{op}\m$ with $(n, t, r):=(\gldim A, \pdim_A Q, \injdim_A Q)$. These objects are then a generalisation of precluster tilting modules in the sense of \cite{zbMATH06833443} which were already a generalisation of higher cluster tilting modules in the sense of \cite{Iyama07}. Such a correspondence is a generalisation of Auslander's correspondence and Iyama's correspondence. We refer to \cite{CP1, CP2} for more details.


\subsection{Quasi-hereditary algebras} \label{Quasihereditaryalgebras} Let $A$ be a finite-dimensional algebra over a field $k$ and assume that $(\L, \leq)$ is a poset labelling the isomorphism classes of projective indecomposable $A$-modules $P(\l)$. Denote by $\top M$ the top of the module $M\in A\m$. Denote by $I(\l)$ the injective hull of $\top P(\l)$, $\l\in \L$.
For each $\l\in \L$, define $\St(\l)$ to be the maximal quotient module of $P(\l)$ having only composition factors $\top P(\mu)$ with $\mu\leq \l$. With these, $(A, \{\St(\l)\colon \l\in \L\})$ is called a \emph{split quasi-hereditary $k$-algebra} if the regular module $A$ has a finite filtration with quotients isomorphic to $\St(\l)$ with $\l\in \L$ and  $\End_A(\St(\mu))\cong k$ for every $\mu\in \L$. 

The modules $\St(\l)$ are called \emph{standard} modules.
Given a set of modules $\theta$, we denote by $\mathcal{F}(\theta)$ the subcategory of $A\m$ whose modules admit a finite filtration with quotients isomorphic to modules in $\theta$. 

Dually, associated to each split quasi-hereditary $R$-algebra there exists a set of \emph{costandard} modules $\{\Cs(\l)\colon \l\in \L\}$ in $A\m$ satisfying $$\mathcal{F}(\Cs)=\{N\in A\m\colon \Ext_A^1(M, N)=0, \forall M\in \mathcal{F}(\St)\}.$$ In particular, $\mathcal{F}(\Cs)$ is a coresolving subcategory of $A\m$, that is, closed under cokernels of monomorphisms, closed under extensions, closed under direct summands, and it contains all injective modules. Further, the quasi-hereditary structure imposes that $\Hom_A(\Cs(\alpha), \Cs(\beta))\neq 0$ is a sufficient condition for $\alpha\geq \beta$ with $\alpha, \beta\in \L$.
 An important property of split quasi-hereditary algebras is the existence of the characteristic tilting module. 
 Here, we say that a module $T$ is a \emph{characteristic tilting module} if $\add T=\mathcal{F}(\St)\cap \mathcal{F}(\Cs)$. In particular, $T$ has $|\L|$ distinct indecomposable direct summands, $T(\l)$, $\l\in \L$, and $[T(\l):\top P(\l)]=1$.
Let $T$ be a basic characteristic tilting module. The \emph{Ringel dual} of a quasi-hereditary algebra is, up to Morita equivalence, the basic algebra $\End_A(T)^{op}$. It has also a quasi-hereditary structure with standard modules $\Hom_A(T, \Cs(\l))$, $\l\in \L^{op}$. More details can be found on \cite{DR, DKV} and the references therein.

\subsubsection{Simple preserving dualities} 

Cellular algebras were introduced by Graham and Lehrer in \cite{zbMATH00871761} to study associative algebras endowed with a cell datum, that is, a distinguished basis compatible with an involution and a partial order. Such an involution induces a simple preserving duality on the module category
of the cellular algebra.
For further details on cellular algebras we refer for example to \cite{zbMATH01218863, zbMATH00871761, zbMATH01384521, zbMATH02152778} and the references therein.
 Many quasi-hereditary algebras also admit a simple preserving duality. In \cite[Corollary 4.2]{zbMATH01218863}, a sufficient condition is established under which a quasi-hereditary algebra admitting a simple preserving duality is cellular. Conversely, in \cite{zbMATH01384521}, Koenig and Xi show that a cellular structure on $A$ induces a quasi-hereditary structure on $A$ precisely when the global dimension of $A$ is finite. Moreover, Coulembier in \cite{zbMATH07203140} showed that in such a case the quasi-hereditary structure on $A$ is essentially unique: it is exactly the one generated by the cellular structure.

\subsection{Recap on the Schur functor, Schur algebras and symmetric groups}\label{sec2dot3}

Given a natural number $d$, we denote by $\cS_d$ the symmetric group on $d$ letters. Let $k$ be a field and $V$ an $n$-dimensional vector space. The symmetric group $\cS_d$ acts on the right of the tensor space $V^{\otimes d}$ by place permutation. The \emph{Schur algebra} $S_k(n, d)$ is defined as the endomorphism algebra $\End_{k\cS_d}(V^{\otimes d})$. Schur algebras  admit a quasi-hereditary structure, where the underlying poset is  the set of partitions of $d$ in at most $n$ parts, $\L^+(n, d)$, with the dominance order. Schur algebras and their Ringel duals also admit a cellular structure and a simple preserving duality (see for example \cite[Section 5]{cellularqhalgebras} and the references therein). Hence, the quasi-hereditary structure of Schur algebras is essentially unique (see also \citep[Theorem 2.1.1]{zbMATH07203140}). The standard modules $\St(\l)$ are known as \emph{Weyl modules}  and the costandard modules $\Cs(\l)$ are known as \emph{dual Weyl modules}. Let $T$ be the (multiplicity-free) characteristic tilting module of $S_k(n, d)$.

Denote by $R_k(n, d)$ the Ringel dual of $S_k(n, d)$ and by $\St_{R_k(n, d)}(\l), \Cs_{R_k(n, d)}(\lambda), T_{R_k(n, d)}(\l), \l\in \L^+(n, d)^{op}$, their standard, costandard and indecomposable partial tilting modules, respectively.
We can consider the Schur functor
$$F_{n, d}=\Hom_{R_k(n, d)}(\Hom_{S_k(n, d)}(T, V^{\otimes d}), -)\colon R_k(n, d)\m\rightarrow \End_{S_k(n, d)}(V^{\otimes d})^{op}\m.$$
By \citep[4.3]{E1}, the endomorphism algebra $\End_{S_k(n, d)}(V^{\otimes d})^{op}$ is actually isomorphic to $k\cS_{d}/I_n$ where $I_n$ is the annihilator of $V^{\otimes d}$.

In \cite{Cr2}, the following result was obtained.

\begin{Theorem}\label{thm2dot2dot1}Let $k$ be a field with positive characteristic $p$ and assume that $d\geq p$. Write $\La_k(n, d)=\End_{S_k(n, d)}(V^{\otimes d})^{op}$. Then, the following assertions hold.
	\begin{enumerate}[(a)]
		\item The restriction of the functor $F_{n, d}$ to $R_k(n, d)\proj$ is fully faithful.
		\item There are isomorphisms $\Ext_{R_k(n, d)}^i(M, N)\cong \Ext_{\La_k(n, d)}^i(F_{n, d}M, F_{n, d}N)$ for every \linebreak\mbox{$0\leq i \leq p-3$} and $M, N\in \mathcal{F}(\St_{R_k(n, d)})$.
		\item The functor $L=\Hom_{S_k(n, d)}(V^{\otimes d}, -)\colon S_k(n, d)\m\rightarrow \La_k(n, d)\m$ induces isomorphisms
		$$\Ext_{S_k(n, d)}^i(M, N)\cong \Ext_{\La_k(n, d)}^i(LM, LN)$$ for every $0\leq i\leq p-3$ and $M, N\in \mathcal{F}(\Cs)$.
	\end{enumerate}
\end{Theorem}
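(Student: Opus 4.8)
The plan is to recognise $F_{n,d}$ as the Schur functor attached to a projective module over the Ringel dual, reduce all three parts to one estimate on a relative dominant dimension, and then establish that estimate by a cohomological vanishing argument.

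Write $S=S_k(n,d)$, $R=R_k(n,d)$ and $\L=\L_k(n,d)$. Since $V^{\otimes d}$ is self-dual under the contravariant duality and carries a Weyl filtration, it is a (partial) tilting $S$-module. The Ringel equivalence $G:=\Hom_S(T,-)\colon\mathcal{F}(\Cs)\xrightarrow{\sim}\mathcal{F}(\St_R)$ sends tilting modules to projectives and is fully faithful on $\mathcal{F}(\Cs)$; hence $P:=G(V^{\otimes d})$ is a projective $R$-module with $\End_R(P)^{op}\cong\End_S(V^{\otimes d})^{op}=\L$, and $F_{n,d}=\Hom_R(P,-)$ is the Schur functor of the projective module $P$. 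In particular $F_{n,d}$ is exact, carries $R\proj$ onto $\L\proj$, and restricts to an equivalence $\add P\simeq\L\proj$.

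With this in place, parts (a) and (b) both follow from the relative dominant dimension machinery of \cite{Cr2}: once $P\domdim_R R\geq m+2$, the exact, projective-preserving functor $F_{n,d}$ induces isomorphisms $\Ext^i_R(M,N)\cong\Ext^i_\L(F_{n,d}M,F_{n,d}N)$ for $0\leq i\leq m$ and all $M,N\in\mathcal{F}(\St_R)$, by transporting the $\add P$-copresentation of $R$ through $F_{n,d}$ and dimension shifting over $\mathcal{F}(\St_R)$. So everything reduces to proving $P\domdim_R R\geq p-1$. Part (a) and the cases $p=2,3$ need only the cheaper bound $P\domdim_R R\geq 2$: this encodes that $(R,F_{n,d})$ is a quasi-hereditary cover of $\L$, and holds because $V^{\otimes d}$ is a faithful tilting $S$-module — faithfulness gives the double centralizer property $S\cong\End_\L(V^{\otimes d})$ of Schur--Weyl duality, from which a projective presentation of the right $\L$-module $V^{\otimes d}$ yields a $\Hom_S(-,V^{\otimes d})$-exact $\add V^{\otimes d}$-copresentation $0\to S\to W_0\to W_1$ of $S$ with $W_j\in\add_S V^{\otimes d}$, and the tilting property lets one carry this through Ringel duality to the required copresentation of $R$. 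Part (c) is then formal: for $M,N\in\mathcal{F}(\Cs)$ one has $GM,GN\in\mathcal{F}(\St_R)$; since $S$ has finite global dimension, $G$ underlies a derived equivalence and so preserves $\Ext^i$ between $\Cs$-filtered modules; and $F_{n,d}\circ G$ agrees with $L=\Hom_S(V^{\otimes d},-)$ on $\mathcal{F}(\Cs)$ because $\Hom_R(\Hom_S(T,V^{\otimes d}),\Hom_S(T,X))\cong\Hom_S(V^{\otimes d},X)$ by full faithfulness of $G$. Composing the two isomorphisms gives (c) in the range $0\leq i\leq p-3$.

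It remains to explain the estimate $P\domdim_R R\geq p-1$, which is the genuine obstacle. Transported through $G^{-1}$ it asserts the existence of a $\Hom_S(-,V^{\otimes d})$-exact coresolution $0\to T\to V_0\to V_1\to\cdots\to V_{p-2}$ of the characteristic tilting module with every $V_i\in\add V^{\otimes d}$. I would build it inductively: given a cokernel $C$ at some step (still $\Cs$-filtered), take a minimal right $\add V^{\otimes d}$-approximation $V\to C$; the failure of $V\to C$ to be injective, and the failure of $\Hom_S(-,V^{\otimes d})$-exactness at that step, are measured by low-degree $\Ext$-groups between modules with good filtrations — equivalently by low-degree cohomology of $\cS_d$ with $d\geq p$, which vanishes in a range governed by $p$. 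These obstructions therefore vanish for the first $p-1$ steps, producing the coresolution and hence the bound. The core difficulty of the theorem is exactly this quantitative control — how long the $\add V^{\otimes d}$-coresolution of the characteristic tilting module stays $\Hom_S(-,V^{\otimes d})$-acyclic — since that is where the modular representation theory of $\cS_d$ enters; once it is available, part (a), part (c), and the $\Ext$-comparison of (b) are formal.
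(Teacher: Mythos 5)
The paper itself does not prove Theorem \ref{thm2dot2dot1}: it is quoted from \cite{Cr2} (Theorems 8.1.2, 8.1.3 and 5.3.1(b) there), so there is no internal proof to compare against line by line. Your reduction, however, does reconstruct the right conceptual skeleton: identifying $P=\Hom_S(T,V^{\otimes d})$ as a projective $R$-module with $\End_R(P)^{op}\cong\Lambda$, recognising $F_{n,d}=\Hom_R(P,-)$ as a Schur functor for that projective, and observing that all three assertions become formal once a lower bound on $P\domdim_R R$ is in hand (together with the compatibility $F_{n,d}\circ G\cong L$ on $\mathcal{F}(\Cs)$, which is indeed stated in the paper). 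The bookkeeping $P\domdim_R R=V^{\otimes d}\codomdim_S T=\tfrac12\,V^{\otimes d}\domdim_S S$ and the relation ``Hemmer--Nakano dimension $=\tfrac12\,V^{\otimes d}\domdim_S S-2$'' are consistent with the numerology the paper uses later (Theorem \ref{maintheoremone} and Theorem \ref{quasihereditarycover}), so your target $P\domdim_R R\geq p-1$ is the right threshold for the $0\leq i\leq p-3$ range.

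The genuine gap is in the last paragraph. You correctly flag the estimate $P\domdim_R R\geq p-1$ as ``the genuine obstacle'', but then you do not prove it: you assert that, in an inductive construction of a $\Hom_S(-,V^{\otimes d})$-exact $\add V^{\otimes d}$-coresolution $0\to T\to V_0\to\cdots\to V_{p-2}$, the obstructions at each step are ``low-degree cohomology of $\cS_d$ with $d\geq p$, which vanishes in a range governed by $p$.'' No specific cohomology groups are identified, no vanishing theorem is cited, and no argument is given as to why the range of vanishing produces exactly $p-1$ exact steps. This is precisely the content that distinguishes the theorem from a formal framework, and asserting it is not the same as proving it; as written, the crucial bound remains unjustified. (It is also not clear that ``cohomology of $\cS_d$'' is the right invariant here at all — the relevant $\Ext$-groups live over $S_k(n,d)$ between $\Cs$-filtered modules, and relating them to $\cS_d$-cohomology requires the very Hemmer--Nakano comparison you are trying to establish, which risks circularity.) A correct proof of (b) must supply this estimate explicitly; the reference \cite{Cr2} does so by a different, non-inductive route, and you should consult it or give an actual argument for the vanishing you invoke.

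Two smaller points. First, for part (a) you appeal to faithfulness of $V^{\otimes d}$ over $\Lambda$ and the double centraliser property to get $P\domdim_R R\geq 2$; this is fine, but note the double centraliser property here requires $d\geq p$ or a separate argument — it is not automatic for partial tilting modules, and the paper relies on Schur--Weyl duality for it. Second, in part (c) the appeal to ``$G$ underlies a derived equivalence since $\gldim S<\infty$'' is more than needed and slightly misleading: the $\Ext$-comparison under $G$ on $\mathcal{F}(\Cs)$ is a standard feature of Ringel duality for any quasi-hereditary algebra and does not use finiteness of global dimension.
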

\begin{proof}
	Assertions (a) and (b) follow from \citep[Theorems 8.1.3, 8.1.2]{Cr2} while assertion (c) follows from \citep[Theorems 8.1.2 and 5.3.1(b)]{Cr2}.
\end{proof}

When $k$ is a field with characteristic zero or $p>d$, then the above functors are equivalence of categories and the algebras involved are semi-simple.  We also observe that the bounds in the above theorem are not necessarily optimal (see for example \cite{CE24}) and finding their optimal value remains an open problem in most cases when $n<d$. 

Recall from \cite{FK} that the \emph{Hemmer-Nakano dimension} of $\mathcal{F}(\St_{R_k(n, d)})$ (with respect to the Schur functor $F_{n, d}$) is the maximal value $n\in \mathbb{N}\cup \{\infty\}$ so that there are isomorphisms $\Ext_{R_k(n, d)}^i(M, N)\cong \Ext_{\La_k(n, d)}^i(F_{n, d}M, F_{n, d}N)$ for every $0\leq i \leq n$ and $M, N\in \mathcal{F}(\St_{R_k(n, d)})$. 

However, for $n\geq d$ the setup is well known, the optimal value is known and it goes back to the work of Hemmer and Nakano as we recall in the following remark. 

\begin{Remark}
	If $n\geq d$, the Schur algebra $S_k(n, d)$ is Ringel self-dual (see \cite{Do}), $V^{\otimes d}$ is a faithful projective-injective module and $\add_{R_k(n, d)} \Hom_{S(n, d)}(T, V^{\otimes d})=\add_{S_k(n, d)} V^{\otimes d}$.  So, when $n\geq d$, $F_{n, d}$ is equivalent to the classical Schur functor $$F=\Hom_{S_k(d, d)}(V^{\otimes d}, -)\colon S_k(d, d)\m\rightarrow k\cS_d\m.$$ In such a case, Theorem \ref{thm2dot2dot1} is one of the main results of \cite{HN} (see also \cite{FK}).
\end{Remark}

We consider the following modules over $k\cS_{d}$:
 \begin{itemize}
 	\item The \emph{Young permutation module} $M^\lambda=F S^\lambda V$, $\lambda=(\lambda_1, \ldots, \lambda_d)\in \L^+(d, d)$ where $S^\lambda V$ denotes the ($\lambda$-th generalised) symmetric power $S^{\lambda_1}V\otimes \cdots \otimes S^{\lambda_d}$;
 	\item The \emph{Young module} $Y^\l=FI(\l)$, $\l\in \L^+(d, d)$; 
 	\item Any direct sum of modules of the form $Y^\l$ is called a \emph{Young module};
 	 \item The \emph{Specht module} $S^\l=F\Cs(\l)$, $\l\in \L^+(d, d)$ (see \citep[3.8(4)]{E1});
 \end{itemize}

Hence, if $k$ has characteristic zero or $p\geq 3$, both $Y^\lambda$ and $S^\lambda$ are indecomposable modules.
This approach to define Specht, Young and Young permutation modules is inspired by the point of view extensively used in \cite{Do2}  and in \cite{E1}. The Young permutation module $M^\l$  is also isomorphic to $k\cS_{d}\otimes_{k\cS_{\l}} k$, where $k$ is the trivial module over the group algebra $k\cS_d$ of the Young subgroup $\cS_\lambda$ (see for example \citep[2.1 (20)]{Do2}).
An alternative way to regard $M^\l$ is to view it as the permutation module on the set of all left cosets representatives of the Young subgroup $S_\l$ (see for example \citep[Corollary 3.4]{zbMATH01361790}).

Equivalent ways to define Specht and Young modules can be found, for example, in \citep[1.6]{zbMATH00971625}, \cite{MR513828}, \citep[Chapter 7]{MR644144}, \citep[4.1]{zbMATH02153663}.

Actually using the theorem above the second author showed in \citep[Propositions 5.1 and 5.2]{E1} the following identifications \begin{align}
	Y^\l\cong \Hom_{S_k(n, d)}(V^{\otimes d}, I(\l))\cong F_{n, d}T_{R_k(n, d)}(\lambda) \label{eqone} \\ S^\l\cong \Hom_{S_k(n, d)}(V^{\otimes d}, \Cs(\l))\cong F_{n, d} \St_{R_k(n, d)}(\l)  \label{eqtwo}
\end{align} for every $n, d$ and $\l\in \L^+(n, d)$. Here, the underlying phenomenon that helps to understand the origin of these identifications is Ringel duality. Indeed, the functors $\Hom_{S(n, d)}(V^{\otimes d}, -)$ and {$\Hom_{R_k(n, d)}(\Hom_{S(n, d)}(T, V^{\otimes d}), - )\circ \Hom_{S(n, d)}(T, -)$} are isomorphic when restricted to $\mathcal{F}(\Cs)$. An important property that Young modules possess is that $Y^\lambda$ is the unique direct summand of $M^\lambda$ that contains $S^\lambda$. If $k$ has characteristic $p>0$, the Young module $Y^\lambda$ is projective precisely when $\lambda$ is the conjugate of a $p$-regular partition of $d$ (see for instance \citep[Page 103]{zbMATH00971625} or \citep[page 653]{zbMATH01400114}).

We note however that some sources present Young and Specht modules as right modules while other sources use left modules. This causes no conflict because right $k\cS_d$-modules can be viewed as right $k\cS_d^{op}$-modules using that $k\cS_d^{op}$ is isomorphic to $k\cS_d$ via the anti-isomorphism $\sigma\mapsto \sigma^{-1}$. Given a right $k\cS_d$-module $X$, we write $X^{\iota}$ to denote the left $k\cS_d$-module obtained by twisting with $\sigma\mapsto \sigma^{-1}$. Similarly, we write ${}^{\iota} X$ for left modules $X$. Given $X\in k\cS_d\m$, we write ${}^\natural X=D ({}^{\iota} X)$. This way, the functor ${}^\natural(-)\colon k\cS_d\m\rightarrow k\cS_d\m$ is a simple preserving duality and it is the one inherited from the cellular structure of $S_k(d, d)$.
Similarly, we write $X^\natural$ for right modules.

Thus, under the above duality, the Young modules are self-dual (that is, ${}^\natural (Y^\lambda)\cong Y^\lambda$) since the indecomposable direct summands of the characteristic tilting module of $S_k(d, d)$ are. In particular, ${}^\iota (Y^\lambda)$ is the right Young module labelled by $\lambda$. Hence, the properties of right Young modules are exactly the same as the ones for left Young modules.
The modules ${}^\natural(S^\lambda)$ are known as \emph{dual Specht} modules. Unless stated otherwise, we use the left Young and Specht modules.

For this reason, we will also consider $DV^{\otimes d}$ as a left $k\cS_d$-module since $V^{\otimes d}$ comes naturally with a right $\cS_d$-action. It follows by their definition that the indecomposable direct summands of $DV^{\otimes d}\cong \Hom_{S_k(n, d)}(V^{\otimes d}, DS_k(n, d))$ are precisely the Young modules $Y^\lambda$ with $\lambda\in \Lambda^+(n, d)$. As left $S_k(n, d)$-modules, 
the indecomposable direct summands of $V^{\otimes d}$ are precisely the modules $T(\lambda)$, where in characteristic zero all $\lambda\in \Lambda^+(n, d)$ occur, while in characteristic $p>0$ only those $\lambda$ that are $p$-regular occur.

We will use the results only for classical Schur algebras, however we note
that the results above also hold over commutative Noetherian rings and also for $q$-Schur algebras, up to some technicalities. For simplicity, we present above only the versions for the classical case of Schur algebras.

We also point out that defining Young modules and Specht modules using the functors $F_{n, d}$ and $F$ is advantageous to understand the meaning behind the labellings that we use. Indeed, this way it is easier to understand and connect the many distinct ways to label, for instance, the simple modules over the symmetric groups. In this terminology, the question posed in \cite{zbMATH03558048} is explained by Ringel self-duality of the Schur algebra $S_k(d, d)$. In fact, one labelling is obtained by applying $F$ to the simple modules labelled by the quasi-hereditary structure of $S_k(d, d)$ while the other by applying $F_{d, d}$ to the simple modules by the quasi-hereditary structure of the Ringel dual (in this specific case: the opposite algebra).


\subsection{The principal block of 
	$\cS_{2p}$ } \label{theprincipalblockS2p}


The following is based on \citep[Appendix B]{zbMATH07389875}.  We will give 
a brief outline and refer to this appendix for further details.

Let $B$ be the basic algebra of the principal block of $k\cS_{2p}$ where $p\geq 3$ and $k$ has characteristic $p$. For our purposes, we do not need to discuss the case $p=2$. We assume that $k$ is algebraically closed unless stated otherwise. In the usual terminology, $B$ has
$p$-core $\emptyset$ and (combinatorial) weight $w=2$.
 Young modules, Specht modules, and the structure of indecomposable projective modules for blocks of symmetric groups of weight two have been extensively studied; see for example \cite{zbMATH04125660, zbMATH00509866, zbMATH00023610, zbMATH00780812, zbMATH01400114, zbMATH01819312}.

 The labelling set for the Specht modules is the set of all the partitions of $2p$, and the labelling set for the simple modules is the set of all the $p$-regular partitions of $2p$. 
A Specht module  belongs to this block (that is, '$\lambda$ belongs to the block') \  if and only if one may remove two (rim) $p$-hooks from the Young diagram $[\lambda]$.

\subsubsection{Abacus notation}\label{abacusnotation}

We use the abacus notation for partitions in this block. This  is both convenient but also provides extra information. 
Namely, it leads to an easy description of the Gabriel quiver of the block, and also shows how to identify  modules with Specht filtration which are
induced from Brauer tree algebras of $k\cS_{2p-1}$.

		  We take an abacus $\Gamma$ with $p$ runners, with two beads on each runner. We may display all partitions $\lambda$ in $B$ on this fixed abacus.
		 
		 We label positions on $\Gamma$ from left to right, then top to bottom, starting with $0$. The runners are labelled by the numbers $1, 2, \ldots, p$.
		 
		 Let $\lambda = (\lambda_1, \ldots,  \lambda_s)$ be a partition in $B$, we note that $2p$ is an integer  greater than or equal to $s$ for all such $\lambda$.
		 The abacus display $\Gamma_{\lambda}$ is given by placing a bead
		 precisely in positions $\beta_i$ where 
		 $$\beta_i = \left\{\begin{array}{ll} \lambda_i-i+ 2p & 1\leq i\leq s\cr -i+2p & i>s
		 \end{array}
		 \right. .
		 $$
		 In other words, the bead is placed at $\beta_i$ so that the number of gaps before the bead is equal to $\lambda_i$. 
		 
		 Moving a bead up on its runner one place corresponds to removing a rim $p$-hook from $[\lambda]$. So if we move each bead up on its
		 runner as far as possible, we get the abacus display for the $p$-core of $\lambda$. 
		 This makes it easy to identify the $p$-core of a partition. 
		 The block $B$ has $p$-weight $2$, that is, in total there are exactly two beads that can be moved up on their respective runner in $\Gamma_{\lambda}$.
		 We have three possibilities:
		 \begin{itemize}
		 	\item[(a)] There is a bead on some runner $v$ that can be moved two places up. We denote the
		 	$\lambda$ by $\la v\ra$.
		 	\item[(b)] There are $1\leq v  < u \leq p$ such that there is a movable bead on runner $v$ and a movable bead on runner $u$. In such a case, we denote $\lambda$ by $\la u, v\ra$. 
		 	\item[(c)] There is some runner $v$ that has a gap followed by two consecutive beads. This one can first move the upper bead one position up, then
		 	the lower bead. In this case, we denote $\lambda$ by $\la v, v\ra$.
		 \end{itemize}

			\begin{Example}\label{examplegabrielquivers2p}
			Using the abacus notation, the relevant poset for $k\cS_{2p}$ in characteristic $p=5$ is the following
			\begin{equation*}
				\begin{tikzcd}
					\langle 5\rangle \arrow[d] \arrow[drr] & &\langle 5, 4\rangle \arrow[d] \arrow[drr]  & & \langle 4, 3\rangle \arrow[d] \arrow[drr]& &  \langle 3, 2\rangle \arrow[d]  \\
					\langle 4\rangle \arrow[urr]  \arrow[dr]&  &\langle 5, 3\rangle \arrow[dr] \arrow[urr] & & \langle 4, 2\rangle \arrow[dr] \arrow[urr]& & \langle 3, 1\rangle \\
					& \langle 3\rangle \arrow[dr] \arrow[ur]  & & \langle 5, 2\rangle \arrow[dr] \arrow[ur]  & &\langle 4, 1\rangle \arrow[ur] \\
					& & \langle 2\rangle \arrow[ur] \arrow[dr]& & \langle 5, 1\rangle \arrow[ur] & \\
					& & & \langle 1\rangle \arrow[ur] &   
				\end{tikzcd}
			\end{equation*}
		The arrows illustrate the dominance order. Indeed, here an arrow $\lambda \rightarrow \beta$ means that $\lambda\geq \beta$ in the dominance order.
		If we replace the directed arrows by two-sided arrows, then we obtain the Gabriel quiver of the  basic algebra of the principal block of $k\cS_{10}$.
		\end{Example}
		
		\begin{Lemma}
			The following inequalities hold under the dominance order:
			\begin{enumerate}
				\item $\langle u\rangle \geq \langle v \rangle$ if and only if $u\geq v$;
				\item $\langle u, v \rangle > \langle u, v-1\rangle$;
				\item $\langle u, v \rangle > \langle u-1, v \rangle$ when $u-1>v$;
				\item $\langle u\rangle > \langle p, u\rangle$ for all $u\neq p$. 
			\end{enumerate}
		\end{Lemma}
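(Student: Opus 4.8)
The plan is to verify all four inequalities directly from the abacus description of the partitions in~$B$, using only the elementary fact that moving a bead up one position on its runner corresponds to removing a rim $p$-hook, together with the standard characterisation of the dominance order on partitions in terms of partial sums $\sum_{i\le j}\lambda_i$ (equivalently, in terms of the positions of the beads: $\lambda\ge\mu$ if and only if, reading the beads of $\Gamma_\lambda$ and $\Gamma_\mu$ from the top, the $j$-th bead of $\Gamma_\lambda$ is weakly to the right of (``later than'') the $j$-th bead of $\Gamma_\mu$ for every~$j$). So the first step is to record this ``bead-position'' reformulation of the dominance order, since every statement then becomes a bookkeeping check of which bead configurations dominate which. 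For the common $p$-core~$\emptyset$ of~$B$, the ground-state abacus has two beads on each runner in their topmost positions, and each $\lambda$ in~$B$ is obtained by sliding a total of two bead-steps downward; this makes the four cases genuinely small.

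For~(1), $\langle u\rangle$ is obtained from the core by pushing the bottom bead of runner~$u$ down two rows (or the two beads of runner~$u$ down one row each, depending on how one normalises case~(a)); comparing $\langle u\rangle$ with $\langle v\rangle$ amounts to comparing where on runner~$u$ versus runner~$v$ a bead has been displaced, and since runners are ordered left to right by their labels, a displacement on runner~$u$ produces a dominance-larger partition than the same displacement on runner~$v$ exactly when $u\ge v$; I would make this precise by writing out the two bead sequences and checking the partial-sum condition termwise. For~(2), passing from $\langle u,v\rangle$ to $\langle u,v-1\rangle$ moves the single movable bead on runner~$v$ to runner~$v-1$ (one step to the left at the same row), leaving the runner-$u$ bead untouched; this strictly decreases exactly one bead position and hence strictly decreases the partition in dominance order, which is the assertion. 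For~(3), under the hypothesis $u-1>v$ the runners $u$ and $u-1$ are distinct from~$v$ and adjacent to each other, so $\langle u,v\rangle$ versus $\langle u-1,v\rangle$ differ only by sliding the runner-$u$ bead one step left to runner~$u-1$; the hypothesis $u-1>v$ guarantees that the resulting configuration is still of type~(b) (the runner-$v$ bead is not disturbed and no coincidence of beads occurs), and again a single bead moves strictly left, giving the strict inequality. For~(4), $\langle u\rangle$ has both displaced bead-steps concentrated on runner~$u$ while $\langle p,u\rangle$ spreads one step onto runner~$p$ and one onto runner~$u$; since $p$ is the rightmost runner, moving a step from runner~$u$ over to runner~$p$ shifts a bead strictly to the right, so $\langle u\rangle$ versus $\langle p,u\rangle$ is resolved by observing that $\langle u\rangle$ has a bead configuration obtained from that of $\langle p,u\rangle$ by moving one bead from runner~$p$ back to runner~$u$, i.e.\ strictly leftward — hence $\langle u\rangle>\langle p,u\rangle$; here one must take a little care with the normalisation of type~(a) versus type~(c) configurations, but in both readings the conclusion is the same.

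The only real subtlety, and the step I expect to need the most care, is keeping the three possibilities (a), (b), (c) straight when beads collide or when a ``move'' would push one bead past another: in those degenerate situations the naive partial-sum comparison has to be done on the correctly reordered bead sequence, and one must confirm that the partitions being compared really are the ones named $\langle u\rangle$, $\langle u,v\rangle$, etc. Everything else is routine termwise inequality checking on sequences of length~$2p$, so I would organise the write-up as: (i) state the bead-position form of the dominance order; (ii) fix the normalised abacus for each of $\langle u\rangle$, $\langle u,v\rangle$, $\langle v,v\rangle$; (iii) dispatch (1)--(4) as four short comparisons, flagging the adjacency hypothesis in~(3) and the $u\ne p$ hypothesis in~(4) precisely where they are used.
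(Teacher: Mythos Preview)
Your reformulation of the dominance order in terms of bead positions is incorrect, and this breaks the argument for~(4). The condition ``the $j$-th bead of $\Gamma_\lambda$ is weakly later than the $j$-th bead of $\Gamma_\mu$ for every $j$'' is equivalent to $\beta_i(\lambda)\ge\beta_i(\mu)$ for all $i$, hence to $\lambda_i\ge\mu_i$ for all $i$; for two partitions of the same integer this forces $\lambda=\mu$, so your criterion never detects a strict inequality. The correct translation is that $\lambda\ge\mu$ iff the \emph{partial sums} $\sum_{i\le j}\beta_i(\lambda)\ge\sum_{i\le j}\beta_i(\mu)$ for all $j$ (with $\beta_1>\beta_2>\cdots$), i.e.\ one must compare cumulative bead positions, not individual ones. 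Concretely, for (4) the two largest beads of $\langle u\rangle$ sit at $3p+u-1$ and $2p-1$, while those of $\langle p,u\rangle$ sit at $3p-1$ and $2p+u-1$: neither sequence dominates the other termwise, and your description ``move one bead from runner $p$ back to runner $u$'' does not describe the actual difference between the two configurations. (Your own sentence is also internally inconsistent: by your criterion a leftward bead move should \emph{decrease} the partition, yet you conclude $\langle u\rangle>\langle p,u\rangle$.)

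Parts (2) and (3) happen to come out right, but not for the reason you give: passing from $\langle u,v\rangle$ to $\langle u,v-1\rangle$ changes \emph{two} bead positions (one decreases by $1$, another increases by $1$), not one. The paper's proof avoids all of this by simply reading off $\lambda_1$ and $\lambda_2$ as the number of gaps before the last and second-to-last bead, respectively, and then comparing these first one or two parts directly --- which is enough because the partitions in each comparison agree beyond that point. For (1) it just writes down $\langle u\rangle=(p+u,1^{p-u})$ explicitly. I would recommend you redo the argument along those lines rather than via a bead-by-bead comparison.
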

	\begin{proof}
		For $u=1, \ldots, p$, the partition $\langle u \rangle$ is of the form $t_u+1^{p-u}$, where $t_u=2p-p+u=p+u$. So, (1) follows.

		The operation of interchanging beads from the runner $v$ in the third row to the runner $v-1$ in the third row does not change the number of gaps before the last bead. But, in $\langle u, v\rangle$ the number of gaps before the second to last bead is bigger than the number of gaps before the second to last bead in $\langle u, v-1\rangle.$ So, (2) follows.
		
			Assume that $u-1>v$.	The number of gaps before the last bead in $\langle u-1, v \rangle$ is equal to the number of gaps before the last bead in $\langle u, v \rangle $ minus one. So, (3) follows.
			
			Suppose now that $u<p$. Then, the number of gaps before the last bead in $\langle p, u\rangle$ is $p$. Since the last bead in $\langle u\rangle$ appears in the fourth row, the number of gaps before the last bead is bigger than $p$. Hence $\langle u\rangle > \langle p, u\rangle$.
	\end{proof}

In the Ext-quiver of $k\cS_{2p}$ over characteristic $p$, only $p$-regular partitions can label the vertices. Hence, Example \ref{examplegabrielquivers2p} illustrates that the partitions $\langle 2, 1\rangle$ and $\langle u, u\rangle$ are not $p$-regular.

\begin{Lemma}\label{singularabacuspartitions}
	A partition $\lambda$ of $2p$ with at most $p$ parts is $p$-singular if and only if it coincides with $\langle 2, 1\rangle $ or it is equal to $\langle u, u\rangle $ for some $u\in\{1, \ldots, p\}$.
\end{Lemma}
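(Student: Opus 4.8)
The plan is to translate the notion of $p$-singularity (i.e.\ not $p$-regular: some part is repeated at least $p$ times) into the abacus language and then check the three families of abacus displays $\langle v\rangle$, $\langle u,v\rangle$ with $u>v$, and $\langle u,u\rangle$ one at a time. Recall that a partition is $p$-singular precisely when it has a part repeated $p$ or more times, equivalently when on the abacus display (with any number of runners, here $p$ runners and two beads per runner) there are $p$ consecutive positions all occupied by beads at the bottom of the display, i.e.\ a block of $p$ beads in a row with a gap immediately above the first of them — in our fixed $p$-runner abacus this translates to the condition that every runner carries a bead in the \emph{same} row, and in the next row up there is a gap on the first runner of that block. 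Since we have only two beads per runner and $p$ runners, the only way to have $p$ consecutive occupied positions is to have a full row of beads; the configurations achievable in $B$ (weight $2$) are very restricted, which is what makes the list short.

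First I would record the exact abacus pictures for the three types. For $\langle v\rangle$: starting from the $p$-core $\emptyset$ (both beads of every runner pushed up to rows $0$ and $1$), moving the bead on runner $v$ down two rows gives beads in rows $0,1$ on all runners except runner $v$, which has beads in rows $0$ and $3$. For $\langle u,u\rangle$: the $p$-core display has beads in rows $0,1$ everywhere; we move the upper bead of runner $u$ down one, then the lower bead down one, ending with runner $u$ having beads in rows $2,3$ and every other runner beads in rows $0,1$. For $\langle u,v\rangle$ with $u>v$: runner $u$ has beads in rows $0,2$, runner $v$ has beads in rows $0,2$ as well (each bead moved down one), and all other runners have beads in rows $0,1$. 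Then I would read off the partition's parts from the gap-counts and see where $p$-fold repetitions occur.

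The heart of the argument is the direction "$p$-singular $\Rightarrow$ in the list". Here I would argue as follows: a $p$-singular $\lambda$ in $B$ must have $p$ consecutive occupied positions on the abacus, hence a completely full row, say row $r$. Since there are only $2p$ beads and each runner holds exactly two, a full row forces either (i) $r=0$ and then the remaining $p$ beads occupy rows $\geq 1$ distributed one per runner with total downward displacement from the $p$-core equal to the weight contribution; with weight $2$ this means the $p$ second beads sit with total "extra descent" $2$, so either one runner's second bead is in row $3$ (giving $\langle v\rangle$ after checking the $p$-singularity condition does hold — one needs the gap above, which forces $v$ small, precisely $v\in\{1\}$... ) — this is exactly the place where $\langle 2,1\rangle$ will appear as the $\langle u,v\rangle$-type exception; or (ii) $r=2$ with all first beads having descended, forcing the picture of $\langle u,u\rangle$. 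Running this case analysis carefully, using the earlier Lemma's description $\langle u\rangle = (p+u, 1^{p-u})$ to see that $\langle u\rangle$ is $p$-singular iff $p-u\geq p$ iff $u\leq 0$, hence never (so the $\langle v\rangle$ family contributes nothing), and an analogous explicit parts-computation for $\langle u,v\rangle$ showing the repeated-part-$p$ condition isolates exactly $\langle 2,1\rangle$, and for $\langle u,u\rangle$ showing it is always $p$-singular, yields the stated list. The converse direction is the easy check already essentially done in the remark preceding the lemma: one verifies $\langle 2,1\rangle$ and each $\langle u,u\rangle$ literally has a part occurring $p$ times by writing down their partitions explicitly from the abacus.

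The main obstacle I anticipate is purely bookkeeping: correctly converting "a part repeated $p$ times" into the right statement about runs of beads on the $p$-runner abacus with two beads per runner (the translation is cleanest on the "unbounded" abacus, so I may phrase the singularity criterion there and then specialise), and making sure no $p$-singular partition of $2p$ with at most $p$ parts is overlooked — in particular checking the boundary/degenerate shapes (e.g.\ whether $\langle 2,1\rangle$ is genuinely realised as an abacus configuration of weight $2$, and that it is distinct from every $\langle u,u\rangle$). Once the criterion is set up, each of the three cases is a short explicit computation of the parts from gap-counts, so no deep idea is needed beyond the correct combinatorial dictionary.
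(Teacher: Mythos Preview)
Your proposal mixes two distinct strategies. The family-by-family check sketched in your final paragraph does work and is a legitimate alternative to the paper's argument: using $\langle u\rangle = (p+u,1^{p-u})$ one sees $\langle u\rangle$ is never $p$-singular; computing $\langle u,u\rangle = (u,1^{2p-u})$ shows it is always $p$-singular since $2p-u\geq p$; and for $\langle u,v\rangle$ with $u>v$ one finds the number of parts is $p-v+1$, so $p$-singularity forces $v=1$, whereupon $\lambda_1=u$ forces $u=2$. This is more computational than the paper's route but entirely straightforward once you correct a minor slip (the beads of runner $u$ in $\langle u,u\rangle$ sit in rows $1,2$, not $2,3$).

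By contrast, your attempted direct argument has a real gap. You assert that $p$ consecutive occupied positions on the $p$-runner abacus must form a complete row, but this is false: already for $\langle 2,1\rangle = (2^p)$ the run of $p$ consecutive beads straddles rows $1$ and $2$. The paper avoids this pitfall by first observing that the only $p$-singular partition of $2p$ with a repeated part $\geq 2$ is $(2^p)$; any other $p$-singular partition then has the part $1$ repeated at least $p$ times, which on the abacus means the \emph{first} gap (counting upward from position $0$) is followed by at least $p$ consecutive beads. A bead count forces this first gap into row $0$, and then the weight-$2$ constraint together with two beads per runner pins the configuration to $\langle s,s\rangle$. If you wish to rescue the direct approach, this preliminary isolation of the $(2^p)$ case is the missing ingredient.
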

	\begin{proof}
		The partition $\langle 2, 1 \rangle$ corresponds to the partition $2^p$ which is $p$-singular. The other $p$-singular partitions of $2p$ contain the part $1$ more than $p-1$ times. In abacus notation, this means that there are $t$ consecutive beads after exactly one empty gap, where $t\geq p$. Since there are only $2p$ beads the gap either occurs in the first or in the second row. But it cannot occur in the second row, since the only chance would be in the first runner, but such an abacus would yield a partition of $p$. So, the gap must occur in the first row, say in the runner $s$. Since there are exactly 2 beads in each runner and only two vertical movements are allowed, the two beads in the runner $s$ are placed in the second and third row. Thus, the partition coincides with $\langle s, s\rangle$.
	\end{proof}
	

		\subsubsection{Specht modules of $k\cS_{2p}$} 
		
		Consider a Specht module $S^{\la u, v\ra}$ labelled by the partition
		$\la u, v\ra$. When $\la u, v\ra$ is $p$-regular, it has a unique simple quotient denoted
		by $D^{\la u, v\ra}$,  and this gives
		a full set of simple $B$-modules (see for example \citep[Theorem 7.1.14 and Corollary 7.1.11]{MR644144}). We write $P^{\la u, v\ra}$ to denote the projective cover of $D^{\la u, v\ra}$.
		The $p$-regular partitions correspond, therefore, to the vertices of
		the Gabriel quiver of the (basic algebra of the) block. We remind the reader that the labelling/notation is different from $D_{\la u, v\ra}$ used for instance in \cite{zbMATH02153663}, that is, the simple module $D^{\la u, v\ra}$ is not necessarily the same as the one $D_{\la u, v\ra}$ used in \cite{zbMATH02153663}.
	
The Loewy structure for the Specht modules is completely described in \citep[Corollary B.6]{zbMATH07389875}.  In the most general case 
when $u\neq v$ and $u-v\geq 2$, the  
composition factors are labelled by  the vertices of the 'mesh' with right corner equal to
$\la u, v\ra$ and left corner $\la u+1, v+1\ra$ (or $\la v+1\ra$).
Here $D^{\la u, v\ra}$ and $D^{\la u+1, v+1\ra}$ are the top and the socle 
of the Specht module, and ${\rm rad} S^{\la u, v\ra}/S^{\la u+1, v+1\ra}$ is
semisimple. We highlight the following two situations.

	\begin{Remark} \label{rmk2dot4dot4} \  \begin{enumerate}[(a)]
			\item If $2\leq s\leq p-2$ then $S^{\la p, s\ra}$ has a submodule isomorphic to $S^{\la s\ra}$: 
			We know the submodule structure of $S^{\la p, s\ra}$ from the discussion above and we see that it  has an indecomposable submodule of
			length two with top isomorphic to $D^{\la s\ra}$ and socle $D^{\la s+1\ra}$. The Specht module $S^{\la s\ra}$ is such a module, and from the Gabriel quiver we know that ${\rm Ext}_{k\cS_{2p}}^1(D^{\la s\ra }, D^{\la s+1 \ra})$ is 1-dimensional. Hence, there is a unique such module up to isomorphism, that is, $S^{\la p, s\ra}$ has a submodule isomorphic to $S^{\la s\ra}$. 
			\item The module $S^{\la p, p-1\ra}$ has length two, with socle  isomorphic to $D^{\la p-1\ra}$ (see \citep[Corollary B6]{zbMATH07389875}).
		\end{enumerate} 
	\end{Remark}

	\subsubsection{Restricting and inducing between  blocks of $k\cS_{2p-1}$ and $B$}
	
	Restricting a Specht module $S^{\lambda}$ to $k\cS_{2p-1}$ gives a module with
	Specht filtration, where the labels for the Specht subquotients are 
	described by the branching rule (see \citep[page 59]{MR644144}).  Removing a node from $[\lambda]$ means
	that on the abacus we  move  one bead to the left (assuming there
	is a gap). 
	The blocks of $k\cS_{2p-1}$ which contain Specht modules which occur
	in this way are all of finite representation-type, that is, they are
	Brauer tree algebras (or are simple).
	
	Here we are interested in the non-simple blocks. They have $p$-cores that are the hook partitions
	$(p-1), \ (p-2, 1), \ldots, (1^{p-1})$; the block with $p$-core $(s-1, 1^{p-s})$ is denoted by $B_s$ in \cite{zbMATH07389875}.
	
	If the restriction of  a Specht module in $B$ to a block $B_s$ is 
	non-zero, then it is a Specht module, and if we induce this to $B$, we
	obtain a module which is indecomposable (which is explained below). It has precisely two Specht subquotients, and the labels can be identified via the abacus:
	they are all partitions obtained by moving a bead one place to the right.
	
	\begin{Def} A  module $U$ in $B$ is called  \emph{Brauer induced} if there is a Specht module $S$ in some Brauer tree algebra  of $k\cS_{2p-1}$ and
		$U\cong S\uparrow^B$.
	\end{Def}

	Then such a module 
	$U$   has precisely two Specht subquotients, and it must be indecomposable (which we explain below).
	We denote it  by $U= U{\lambda \choose \mu}$ if
	it has a submodule isomorphic to $S^{\mu}$ with quotient $S^{\lambda}$.

	We consider one of the blocks $B_s$. If we label the Specht modules in it  by  
	$S_i= S^{\lambda_i}$ for $0\leq i\leq p-1$ with $\lambda_i > \lambda_{i+1}$ 
	in the dominance order, then $S_0$ is a Young module. It follows that  $S_0\uparrow^B$ is also  a Young module.

Let $0\leq i \leq p-2$.  The projective covers, $P_i$, of the Specht modules in $B_s$ satisfy
$$0 \to S_{i+1} \to P_i \to S_i \to  0.
$$
Inducing such a sequence to $B$ gives the projective cover for the Brauer induced module $U = S_i\uparrow^B$. 
One can show that the modules $P_i\uparrow^B$ are indecomposable. This  implies that a Brauer induced module
$S_i\uparrow^B$ has a simple  top, since it is a factor module of an indecomposable projective.
It also has a simple socle; if $i\leq 1$, then it is a submodule of the projective and injective module $P_{i-1}\uparrow^B$ and for $i=0$ it is the Young module which has
a simple top and is selfdual and hence has a simple socle. In particular, it is indecomposable.

	 \subsubsection{Restricting to Brauer tree algebras of $k\cS_{2p-1}$}
	 \label{restrictingtoBrauertreealgebras}
	 
	 We assume now for the remainder of the subsection that $p\geq 5$. We will deal later with the case $p=3$ separately, due to notational reasons.
	 
	 \begin{enumerate}[(1)]
	 	\item \textit{We will first restrict to the block  with core $(p-1)$, that is, $B_p$.}
	 	
	 		 We consider the subquiver of the Gabriel quiver of the basic algebra of $k\cS_{2p}$ with vertices $\la p \ra, \ \la p, p-2\ra, \dots \la p, 1\ra$ and we consider one of these partitions on the abacus. 
	 	We can move a bead from runner $p$ to runner $p-1$. This gives a partition of $2p-1$  which has weight $w=1$ and has $p$-core $(p-1)$.
	 	In this way, we get $p-1$ distinct partitions which form a total order under the dominance order. In addition,  we have the $p$-singular partition $\la p, p\ra$ and if we move the last bead on runner $p$ by one place to the left
	 	we get a $p$-singular partition also in the block with core $(p-1)$. 
	 	
	 	 Inducing $S_0:= S^{\la p\ra}\downarrow_{B_p}$ to $B$ gives the Young module $Y^{\la p-1\ra}$ which in our notation is $U{\la p\ra \choose \la p-1\ra }$,
	 	Next, inducing  $S_1:= S^{\la p, p-2\ra}\downarrow_{B_p}$ to $B$ contains
	 	$S^{\la p-1, p-2\ra}$ and has quotient $S^{\la p, p-2\ra }$. 
	 	Inducing the exact sequence 
	 	$0\to S_1\to P_0\to S_0\to 0$ in $B_p$ to the block $B$ gives 
	 	$$0\to U{\la p, p-2\ra \choose \la p-1, p-2\ra}\to P^{\la p\ra} \to Y^{\la p-1\ra}\to 0.
	 	$$
	 	where $P^{\la p\ra}$ is the projective cover of $D^{\la p\ra}$,  and we can read off the labels of its Specht subquotients from 
	 	the exact sequence.
	 	Similarly starting with the projective cover of $S_1$, that is $0\to S_2\to P_1\to S_1\to 0$  and inducing to $B$ we obtain
	 	$$0\to U{\la p,  p-3\ra\choose \la p-1, p-3\ra} \to P^{\la p, p-2\ra} \to U{\la p, p-2\ra \choose \la p-1, p-2\ra}\to 0.$$
	 	In this way, we  obtain further Brauer induced modules and indecomposable
	 	projective modules.
	 	
	 	\item \textit{We restrict to the block $B_s$ with core $(s-1, 1^{p-s})$ for $2\leq s < p$. }
	 	
	 	Take the subquiver of the Gabriel quiver of the basic algebra of $k\cS_{2p}$
	 	with the following vertices:
	 	$$\la s\ra, \ \la p, s \ra  \ \la p-1, s\ra, \ldots \la s+1, s\ra, \ \ \la s, s-2\ra, \ \ldots, \ \la s, 1\ra.$$
	 	Consider one of the partitions on the abacus. If we move the bead on runner $u$ one place to the left, then in each case  we get a partition in the block $B_s$, with
	 	core $(s-1, 1^{p-s})$. This gives all $p$-regular partitions in this block, and they form a total order under the dominance order.
	 	Then, similarly to the method explained in (1), we obtain Brauer induced
	 	modules, and also indecomposable projective modules. The module $S^{\la s\ra}\downarrow_{B_s}\uparrow^B$ is the Young module $Y^{\la s-1\ra}$, with 
	 	Specht subquotients $S^{\la s-1\ra}$ and $S^{\la s\ra}$. 
	 	
	 \end{enumerate}
	 
	 	 The information from the Brauer induced modules
	 gives rise to filtrations of indecomposable projective modules and gives most of Theorem \ref{thm:proj}.
	 Whenever a projective module
	 $P^{\la u, v\ra}$ occurs twice by inducing from Brauer tree algebras, it has  two different Brauer induced  quotients, which are then of the form 
	 $U{\la u, v\ra\choose \la u, v-1\ra}$ and $U{\la u, v\ra \choose \la u-1, v\ra}$, the projective module will also contain 
	 $S^{\la u-1, v-1\ra}$, and it has a filtration with quotients
	 $$S^{\la u-1, v-1\ra}, \ \ S^{\la u, v-1\ra}\oplus S^{\la u-1, v\ra}, \ \ S^{\la u, v\ra}.$$
	 This gives the projective modules in parts (c), (e), (f) of Theorem \ref{thm:proj} below. 
	 
	 For parts (a), (d) and (e),  the projective only has one Brauer induced quotient, which we indicate by  the notation $\diagup$.

	 All but one  of the indecomposable projective modules are obtained by inducing from Brauer tree algebras
	 of $k\cS_{2p-1}$,  
	 The remaining module, the one in part (g) of Theorem \ref{thm:proj} below, is obtained as follows.
	 Consider the restriction of  the Specht modules labelled by $\la 1\ra$.
	 We can move one bead from runner $1$ to the left and get the core partition $(p, 1^{p-1})$. This shows that
	 $S^{\la 1\ra}$ restricted to $\cS_{2p-1}$ has  a direct summand which is simple projective. Inducing this
	 to $B$ we get the projective module, and the abacus shows that this has three Specht subquotients, labelled by
	 $\la 1\ra, \ \la p, 1\ra $ and $\la p, p\ra.
	 $

\subsubsection{Indecomposable projective modules of $B$}

Theorem B.4 of \citep{zbMATH07389875}  describes completely the Specht subquotients of the indecomposable projective modules. 
We need to use the submodule structure obtained above using the Brauer induced modules. This is described in the following.

\begin{Theorem}(see \citep[Theorem B.4]{zbMATH07389875})\label{thm:proj} Let $\lambda$ be a $p$-regular partition in $B$. Then $P^{\lambda}$ has the following structure.
	\begin{enumerate}[(a)]
		\item $$P^{\la p \ra} \cong \ \ 
		\begin{matrix} & &S^{\la p \ra }  &  & \cr   
			& \diagup &         && \cr
			S^{\la p-1 \ra }& &&&S^{\la p, p-2\ra} \cr  
			&&& \diagup & \cr
			&&S^{ \la p-1, p-2 \ra }&& \end{matrix};$$
		\item $$P^{\la s \ra} \cong 
		\ \ \begin{matrix}  &S^{\la s \ra }  &   \cr   
			S^{\la s-1 \ra }& \oplus &S^{\la p, s \ra} \cr  
			&S^{ \la p, s-1 \ra }& \end{matrix} \ \quad  (2\leq s\leq p-1);$$
		\item $$P^{\la s,1\ra} \cong \ \ \begin{matrix} & S^{\la s,1 \ra} & \cr  S^{\la s-1, 1 \ra}&\oplus & S^{\la s, s \ra} \cr & S^{\la s-1, s-1 \ra} &\end{matrix}\quad  (  3\leq s\leq p);$$
		\item $$P^{\la s+1, s\ra} \cong \ \ 
		\begin{matrix} & &S^{\la s+1,s  \ra }  &  & \cr   
			& \diagup &         && \cr
			S^{\la s+1, s-1 \ra }& &&&S^{\la s, s-2\ra} \cr  
			&&& \diagup & \cr
			&&S^{ \la s-1, s-2 \ra }&& \end{matrix} \quad  (3\leq s\leq p-1);$$
		\item $$P^{\la 3,2 \ra} \cong \ \   \begin{matrix} &&S^{\la 3,2 \ra } &&\cr
			& \diagup &&&\cr
			S^{\la 3,1\ra } &&&& S^{\la 2,2 \ra} \cr 
			&&& \diagup & \cr
			&&S^{\la 1,1 \ra}&&\end{matrix};$$
		\item $$P^{\la r,s \ra} \cong \ \ \begin{matrix} &S^{\la r,s\ra} & \cr S^{\la r-1,s\ra }&\oplus & S^{\la r, s-1\ra } \cr & S^{\la r-1, s-1\ra}&\end{matrix} \quad   (p\geq r > s > 1 \ \mbox{and} \ r-s>1);$$
		\item $$P^{\la 1\ra} \cong  \ \ \begin{matrix} & S^{\la 1\ra} & \cr 
			& S^{\la p, 1\ra} & \cr &  S^{\la p, p\ra} &\end{matrix}.
		$$
	\end{enumerate}	
\end{Theorem}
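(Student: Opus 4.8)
The statement is the refinement of \citep[Theorem B.4]{zbMATH07389875}: that reference already pins down, for each $p$-regular $\lambda$ in $B$, the multiset of Specht quotients occurring in a Specht filtration of $P^{\lambda}$, so the only thing left to prove is the \emph{layering} recorded in the displays — which Specht module sits at the bottom of the filtration, which at the top, and that the middle of the diamond-shaped cases is a genuine direct sum. The plan is to read this off from the Brauer induced modules constructed in Subsubsection~\ref{restrictingtoBrauertreealgebras}, together with the self-duality of projectives in a block of $k\cS_{2p}$.

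The mechanism is as follows. For a $p$-regular $\lambda=\la u\ra$ or $\la u,v\ra$ I would use the abacus $\Gamma_\lambda$ to determine which block(s) $B_s$ of $k\cS_{2p-1}$ one reaches by sliding a single bead one place to the left, and identify the resulting Specht module $S^{\lambda}\downarrow_{B_s}$ inside the Brauer tree algebra $B_s$. Since $B_s$ has weight one, its indecomposable projectives fit into short exact sequences $0\to S_{i+1}\to P_i\to S_i\to 0$ with the $S_i$ totally ordered under dominance; inducing such a sequence to $B$ yields $0\to (S_{i+1}\uparrow^B)\to (P_i\uparrow^B)\to (S_i\uparrow^B)\to 0$, where $P_i\uparrow^B$ is indecomposable projective and each $S_i\uparrow^B$ is a Brauer induced module $U{\la u,v\ra \choose \mu}$, which by construction has exactly two Specht quotients with $S^{\mu}$ a submodule and $S^{\la u,v\ra}$ (the larger label under dominance) the quotient — the labels $\la u,v\ra$ and $\mu$ being read off by moving a bead one place to the right on the abacus.

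Next comes the case split on the number of times a given $P^{\lambda}$ arises by inducing from a Brauer tree algebra. If it arises once — parts (a), (d), (g) and the one-sided cases — then $P^{\lambda}\cong P_i\uparrow^B$ has a single Brauer induced quotient $U{\lambda \choose \mu}$, flagged by $\diagup$, and the short exact sequence above together with the two-step structure of both the quotient and the kernel produces exactly the displayed diagram. If $P^{\la u,v\ra}$ arises twice — parts (c), (e), (f) — then it has two distinct Brauer induced quotients, necessarily $U{\la u,v\ra \choose \la u,v-1\ra}$ and $U{\la u,v\ra \choose \la u-1,v\ra}$; since $S^{\la u,v\ra}$ is their common top Specht quotient, the remaining Specht quotients of these are $S^{\la u,v-1\ra}$ and $S^{\la u-1,v\ra}$, and the fourth Specht quotient of $P^{\la u,v\ra}$ supplied by \citep[Theorem B.4]{zbMATH07389875} is $S^{\la u-1,v-1\ra}$; this forces the filtration layers $S^{\la u-1,v-1\ra}$, $S^{\la u,v-1\ra}\oplus S^{\la u-1,v\ra}$, $S^{\la u,v\ra}$ from bottom to top. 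Finally the exceptional module in (g): sliding the runner-$1$ bead of $\Gamma_{\la 1\ra}$ to the left hits the $p$-core $(p,1^{p-1})$, so $S^{\la 1\ra}\downarrow_{k\cS_{2p-1}}$ has a simple projective summand; inducing that summand to $B$ gives a projective with three Specht quotients $S^{\la 1\ra},S^{\la p,1\ra},S^{\la p,p\ra}$ (read off the abacus), uniserially stacked because it embeds in an injective with the required simple socle.

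The main obstacle I expect is proving that the middle layer in parts (c), (e), (f) really splits as $S^{\la u,v-1\ra}\oplus S^{\la u-1,v\ra}$ rather than forming a non-split self-extension of Specht modules; I would settle this by combining self-duality of $P^{\la u,v\ra}$ (projectives in a symmetric algebra are injective and here coincide with their contravariant dual) with the known composition series from \citep[Corollary~B.6]{zbMATH07389875}, so that ${\rm rad}\,P^{\la u,v\ra}$ modulo the bottom Specht module is forced to be the direct sum. The closely related point that each $P_i\uparrow^B$ is indecomposable — needed so that a Brauer induced quotient has a simple top — is exactly the argument already recalled in the excerpt. Everything else is abacus bookkeeping.
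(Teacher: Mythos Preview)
Your approach matches the paper's almost exactly: the discussion in Subsubsection~\ref{restrictingtoBrauertreealgebras} together with the paragraph preceding Theorem~\ref{thm:proj} is precisely the Brauer-induction bookkeeping you describe, including the treatment of part~(g) via the simple projective summand coming from the core $(p,1^{p-1})$.

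There is, however, one genuine gap. You never address part~(b), the projectives $P^{\la s\ra}$ for $2\leq s\leq p-1$. These do \emph{not} arise twice by Brauer induction: the only Brauer-induced quotient of $P^{\la s\ra}$ is $U{\la s\ra\choose\la s-1\ra}=Y^{\la s-1\ra}$, obtained from the block $B_s$. The second ``half'' of the diamond, with submodule $S^{\la p,s\ra}$ and quotient $S^{\la s\ra}$, is not a Brauer-induced module, so your two-quotient template does not apply. The paper handles this by the lemma immediately following the theorem, which computes ${\rm Ext}^1_B(S^{\la s\ra},S^{\la p,s\ra})\cong k$ directly (via the Specht filtration of $\Omega^1(S^{\la s\ra})$ and the submodule structure of $S^{\la p,s\ra}$), thereby showing that such an extension is unique and has simple top and socle. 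Your proposed substitute---self-duality of $P^{\la s\ra}$ plus the composition series from \citep[Corollary~B.6]{zbMATH07389875}---is not sharp enough to force the $\oplus$ in the middle layer without this Ext computation or something equivalent.

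A minor misclassification: you place part~(e) in the ``arises twice'' group, but $P^{\la 3,2\ra}$ has only one Brauer-induced quotient (the $\diagup$ notation signals this), because in $\Gamma_{\la 3,2\ra}$ the bead on runner~$3$ cannot slide left---runner~$2$ in the same row is occupied. The paper itself contains a typo listing~(e) in both groups; the intended split is (a),~(d),~(e) once, (c),~(f) twice, (b) via the extra lemma, (g) special.
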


In \citep[Corollary B.6.]{zbMATH07389875}, the composition factors of each Specht module are described and consequently the composition factors of all projective indecomposable modules. 
Here, by displaying a module $P$ in the form $\begin{matrix} & &M  &  & \cr   
	& \diagup &         && \cr
	N& &&&X\cr  
	&&& \diagup & \cr
	&&Y&& \end{matrix}$ we mean that $P$ fits into an exact sequence $0\rightarrow \begin{matrix}
	X \\ Y
\end{matrix}\rightarrow P \rightarrow \begin{matrix}
M \\ N
\end{matrix}\rightarrow 0$. This way the Specht filtration of $P$ is the one built from the one of $\begin{matrix}
X \\ Y
\end{matrix} $ at the bottom and the one $\begin{matrix}
M \\ N
\end{matrix}$ in the upper part of the filtration. For modules $M$ displayed as 
$\begin{matrix} &N_1 & \cr N_2&\oplus & N_3\cr & N_4&\end{matrix}$ there exists a filtration with subquotients $N_4$, $N_2\oplus N_3$ and $N_1$. 
In particular, such modules $M$ admit more than one Specht filtration. One such filtration is given by $0=M_5\subset M_4\subset M_3\subset M_2\subset M_1=M$ with $M_i/M_{i+1}\cong N_i$ for $i=1, \ldots, 4$. Moreover, whenever $S^{\lambda}$ occurs in the layer directly below $S^{\mu}$ in the projective module for $\mu$
then there is an indecomposable module containing $S^{\lambda}$ with quotient $S^{\mu}$, which has  simple socle and top, except in the three cases where a line segment $\diagup$ appears. Then for the two quotients with no line segment there is no such module.

These modules of the form $\begin{matrix}
	S^{\mu} \\ S^{\lambda}
\end{matrix}$ are either Brauer induced, or they exist, by the following  lemma.
In particular, the next lemma shows  that the projective modules $P^{\la s \ra}$ can be displayed in the form presented above.

\begin{Lemma} Let $2 \leq s\leq p-1$. 
	Up to isomorphism, there is a unique indecomposable module
	with two Specht subquotients, containing $S^{\la p, s\ra}$ and with 
	quotient $S^{\la s\ra}$. This module has a simple socle and 
	a simple top.
\end{Lemma}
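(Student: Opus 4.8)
The plan is to exploit the Brauer-induced structure from Subsubsection \ref{restrictingtoBrauertreealgebras}, specifically from part (2), together with the uniqueness statement for extensions coming from the one-dimensionality of $\Ext^1$-groups visible in the Gabriel quiver of $B$. First I would set up the candidate module: restrict the Specht module $S^{\la s\ra}$ (more precisely its component in the block $B_s$) to $k\cS_{2p-1}$ and induce back to $B$; by the abacus description this Brauer-induced module $U = U{\la s\ra\choose \la p, s\ra}$ has exactly two Specht quotients, with submodule $S^{\la p, s\ra}$ and quotient $S^{\la s\ra}$, and — as explained in the paragraph after the displayed projective-cover sequences in (1) and reused in (2) — it is indecomposable with simple top $D^{\la s\ra}$ and simple socle. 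This establishes existence.

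For uniqueness, let $M$ be any indecomposable module with two Specht quotients, submodule $S^{\la p, s\ra}$ and quotient $S^{\la s\ra}$, so that there is a short exact sequence $0\to S^{\la p, s\ra}\to M\to S^{\la s\ra}\to 0$. Such $M$ is classified by $\Ext^1_{k\cS_{2p}}(S^{\la s\ra}, S^{\la p, s\ra})$, and the split extension is excluded by indecomposability, so it suffices to show this $\Ext^1$-space is one-dimensional. To compute it I would use the Loewy structures recalled before Remark \ref{rmk2dot4dot4}: $S^{\la s\ra}$ has top $D^{\la s\ra}$ and socle $D^{\la s+1\ra}$ (a uniserial length-two module by the mesh description, since $\la s\ra=\la s, \cdot\ra$-type with the relevant mesh), while $S^{\la p,s\ra}$ has top $D^{\la p,s\ra}$ and socle $D^{\la s+1\ra}$ with radical layers read off from the mesh with corners $\la p,s\ra$ and $\la p+1, s+1\ra=\la s+1\ra$. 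Applying $\Hom_{k\cS_{2p}}(-, S^{\la p,s\ra})$ to the sequence $0\to D^{\la s+1\ra}\to S^{\la s\ra}\to D^{\la s\ra}\to 0$ gives a long exact sequence; the key inputs are that $\Hom(D^{\la s\ra}, S^{\la p,s\ra})=0$ and $\Hom(D^{\la s+1\ra}, S^{\la p,s\ra})=k$ (since $D^{\la s+1\ra}$ is the socle of $S^{\la p,s\ra}$ with multiplicity one), and that $\Ext^1(D^{\la s\ra}, S^{\la p,s\ra})$ has dimension controlled by the quiver arrow counts, which for the principal block of $k\cS_{2p}$ in weight $2$ are all at most one (cf. Example \ref{examplegabrielquivers2p}). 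Chasing dimensions then forces $\dim\Ext^1_{k\cS_{2p}}(S^{\la s\ra}, S^{\la p,s\ra})\le 1$, and it is exactly $1$ because the Brauer-induced module $U$ is a non-split extension.

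Finally, since any indecomposable such $M$ corresponds to a nonzero class in a one-dimensional space, $M\cong U$, and $U$ is already known to have simple top and simple socle; this proves the uniqueness and the stated socle/top properties. The main obstacle I anticipate is the precise computation of $\Ext^1_{k\cS_{2p}}(S^{\la s\ra}, S^{\la p,s\ra})$: one must be careful that the Specht modules involved genuinely have the two-step structure claimed and that no higher arrows in the quiver contribute, and the cleanest route may well be to avoid the $\Ext$-computation altogether and instead argue directly that a module $M$ with the prescribed Specht filtration must have its unique copy of $D^{\la s+1\ra}$ (the common socle constituent) fused into a single socle, then use that $D^{\la s\ra}$ can only sit on top — reducing the whole argument to the uniqueness of the non-split extension $0\to D^{\la s+1\ra}\to (\cdot)\to D^{\la s\ra}\to 0$ inside $S^{\la p,s\ra}$'s ``cap'', exactly as in the argument sketched for Remark \ref{rmk2dot4dot4}(a).
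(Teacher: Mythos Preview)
Your existence argument contains a genuine error: the module $U{\la s\ra\choose \la p, s\ra}$ is \emph{not} Brauer-induced. For it to arise as $S^{\nu}\uparrow^B$ from some Brauer tree block, there would have to be a partition $\nu$ of $2p-1$ such that adding a node yields both $\la s\ra = (p+s,1^{p-s})$ and $\la p,s\ra = (p,s+1,1^{p-s-1})$; a direct check of removable nodes shows no such $\nu$ exists (the first parts $p+s$ and $p$ already cannot be reconciled). In fact the restriction $S^{\la s\ra}\downarrow_{B_s}\uparrow^B$ is the Young module $Y^{\la s-1\ra}=U{\la s\ra\choose \la s-1\ra}$, as stated in Subsubsection~\ref{restrictingtoBrauertreealgebras}(2), not the module you want. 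This is precisely why the lemma is needed: parts (a), (d), (e) of Theorem~\ref{thm:proj} carry the $\diagup$ decoration because those projectives have only one Brauer-induced quotient, and the second filtration of $P^{\la s\ra}$ must be produced by hand. The paper instead gets existence by observing that $P^{\la s\ra}$ has a quotient with the required Specht filtration (hence simple top) and the injective hull of $D^{\la p,s\ra}$ has a submodule with the same filtration (hence simple socle), and then shows these coincide via the $\Ext^1$ computation.

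Your uniqueness strategy --- showing $\Ext^1_B(S^{\la s\ra}, S^{\la p,s\ra})$ is one-dimensional --- is the same as the paper's, but your computation is not quite nailed down. Passing through the composition series of $S^{\la s\ra}$ forces you to control $\Ext^1_B(D^{\la s\ra}, S^{\la p,s\ra})$, and the Gabriel quiver only bounds $\Ext^1$ between \emph{simples}, not between a simple and a Specht module, so ``arrow counts are at most one'' does not immediately give what you need. The paper's route is more direct: apply $\Hom_B(-,S^{\la p,s\ra})$ to $0\to\Omega^1 S^{\la s\ra}\to P^{\la s\ra}\to S^{\la s\ra}\to 0$, use Remark~\ref{rmk2dot4dot4}(a) to see that $\Hom_B(S^{\la s\ra},S^{\la p,s\ra})\to\Hom_B(P^{\la s\ra},S^{\la p,s\ra})$ is an isomorphism $k\to k$, and then compute $\Hom_B(\Omega^1 S^{\la s\ra},S^{\la p,s\ra})$ directly from the known Specht filtration of $\Omega^1 S^{\la s\ra}$ (bottom $S^{\la p,s-1\ra}$, quotient $S^{\la s-1\ra}\oplus S^{\la p,s\ra}$).
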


\begin{proof}
By Theorem B.4 of \cite{zbMATH07389875}, the projective module $P^{\la s\ra}$ has a factor module with two Specht subquotients, from the fact that it has a Specht filtration and we know the label
	of its Specht subquotients. Such a module has a simple top.  Similarly, the injective hull of $P^{\la p, s\ra}$ has a submodule with two Specht subquotients and with the same labels, and it has a simple socle.
	We show that ${\rm Ext}_B^1(S^{\la s\ra}, S^{\la p, s\ra}) = k$.
	
		By Remark \ref{rmk2dot4dot4},  ${\rm Hom}_B(S^{\la s \ra}, S^{\la p, s\ra}) = k$ and we see that also  ${\rm Hom}_B(P^{\la s\ra}, S^{\la p, s\ra}) = k$.  
	
By applying ${\rm Hom}_B(-, S^{\la p, s\ra})$ to the exact sequence \ $0\to \Omega^1(S^{\la s\ra}) \to P^{\la s\ra} \to S^{\la s\ra} \to 0$, we deduce that ${\rm Ext}_B^1(S^{\la s\ra}, S^{\la p, s\ra}) \cong {\rm Hom}_B(\Omega^1(S^{\la s\ra}), S^{\la p, s\ra})$. 

The module $\Omega^1(S^{\la s\ra})$ contains $S^{\la p, s-1\ra}$ and $D^{\la p, s-1\ra}$ does not occur in $S^{\la p, s\ra}$. Hence, any homomorphism $\theta$
from $\Omega^1(S^{\la s\ra})$ maps $S^{\la p, s-1\ra}$ to zero and induces a homomorphism $\bar{\theta}$ from the quotient  $\Omega^1(S^{\la s\ra})/S^{\la p, s\ra}$. This  is the direct sum 
$S^{\la p, s\ra}\oplus S^{\la s-1\ra}$. Then $\bar{\theta}$ maps the second summand to zero and it is a multiple of the identity on the first summand.
\end{proof}

 \section{Specht and Young modules over $\Lambda_k(p, 2p)$} \label{SpechtandYoungmodulesoverLambda}

 We now move our focus towards the algebra $\La_k(p, 2p):=\End_{S_k(p, 2p)}(V^{\otimes 2p})^{op}$ and its principal block, where $k$ is a field of characteristic $p\geq 3$.  For the treatment of the case $p=2$, we refer the reader to  \cite{CE24}. As mentioned, this algebra is isomorphic to the quotient algebra $k\cS_{2p}/I_p$ where $I_p$ is the annihilator of the $\cS_{2p}$-action on $V^{\otimes 2p}$.  Using the connection presented in Theorem \ref{thm2dot2dot1}, the simple modules of this algebra are the ones inherited from the Ringel dual of the Schur algebra. Observe that $T(\l)$ is a direct summand of $V^{\otimes 2p}$ if and only if $\lambda$ is a $p$-regular partition of $2p$. It follows that the simple modules of $\Lambda_k(p, 2p)$ are precisely the modules $\top F_{p, 2p}P_R(\l)=\top F_{p, 2p}\St_{R_k(p, 2p)}(\lambda)$ with $\l\in \Lambda^+(p, 2p)$ and $p$-regular. So $\Lambda_k(p, 2p)$ is the  quotient of $k\cS_{2p}$ with a complete set of non-isomorphic simple modules given by 
$\{\top S^\lambda\colon \lambda\in \L^+(p, 2p) \text{ and } \lambda \text{ is $p$-regular} \}$ and with $\{F_{p, 2p}P_R(\l)\colon \lambda\in \L^+(p, 2p) \text{ and } \lambda \text{ is $p$-regular} \}$ as the complete set of projective indecomposable modules.
 
 Let $Q$ be the maximal multiplicity-free direct summand of $V^{\otimes 2p}$ that is a module over the (basic algebra of the) principal block $B$ of $\cS_{2p}$. So $DQ$ as left $B$-module is the direct of all Young modules $Y^\lambda$ whose label satisfies $$\lambda\in (\{\la v\ra\colon 1\leq v\leq p \}\cup \{\la u, v\ra\colon 1\leq v< u \leq p\}\cup \{\la v, v\ra\colon 1\leq v\leq p\}  )\cap \Lambda^+(p, 2p).$$ Then $A_0:=\End_B(Q)$ is the (basic) principal block component of $S_k(p, 2p)$ while $\Lambda_0:=\End_{A_0}(Q)^{op}$ is the (basic) principal block component of $\Lambda_k(p, 2p)$.

 \subsection{Projective $\Lambda_0$-modules} For convenience, we assume now, until the end of the Section \ref{SpechtandYoungmodulesoverLambda}, that $p>3$.
 The projective $\Lambda_0$-modules are obtained from those presented in Theorem \ref{thm:proj} by factoring out the Specht modules where the partition has more than $p$ parts.
  They are precisely the ones labelled as $\la r,   r \ra $ for $1\leq r\leq p$. (Using Subsection \ref{abacusnotation} we see that $\la u\ra$ and $\la u, v\ra$ 
 do not have more than $p$ parts, and $\la u, u\ra$ has  already $p$ beads in the second line of the abacus and has more than $p$ parts).

There are precisely $p$ projective indecomposable modules of $B$ which are not projective for $\Lambda_0$, they are the ones where the label is on the right edge of the Gabriel quiver, that is 
labelled by 
$$\la 1 \ra, \ \la p, 1\ra, \ \ldots, \ \la 3, 1\ra, \ \la 3, 2\ra,
$$
that is, the projective modules described in Parts (c), (e) and (g) of Theorem \ref{thm:proj}.

All other projective modules
of the block remain projective over $\Lambda_0$, and in particular, they are then projective and injective. So, we deduced the following:

\begin{Lemma}\label{lemmaprojectives}
	\begin{enumerate}
		\item The set of non-isomorphic projective indecomposable $\Lambda_0$-modules is labelled by ${\{\la v\ra\colon 1\leq v\leq p\}\cup \{\la u, v\ra\colon 1\leq v<u\leq p\}}\setminus \{\la 2, 1\ra\}$.
		\item Let $\lambda$ be a partition in ${\{\la v\ra\colon 1\leq v\leq p\}\cup \{\la u, v\ra\colon 1\leq v<u\leq p\}}\setminus \{\la 2, 1\ra\}$. Denote by $P_{\Lambda_0}^\lambda$ the projective cover of $\top S^\lambda$. Then, 
		\begin{enumerate}[(i)]
			\item $P_{\Lambda_0}^{\la s\ra}=P^{\la s \ra}$ for $2\leq s\leq p$;
			\item $P_{\Lambda_0}^{\la u, v\ra}=P^{\la u, v\ra}$ whenever $\la u, v\ra \notin \{\la p, 1\ra,  \ldots,  \la 3, 1\ra, \ \la 3, 2\ra\};$
			\item $P_{\Lambda_0}^{\la s, 1\ra} \cong  \begin{matrix}  S^{\la s, 1\ra} & \cr 
				 S^{\la s-1, 1\ra} & \end{matrix}$  for $3\leq s\leq p;$
			\item $P_{\Lambda_0}^{\la 3, 2\ra} \cong   \begin{matrix}  S^{\la 3, 2\ra} & \cr 
				 S^{\la 3, 1\ra} & \end{matrix};$
			\item $P_{\Lambda_0}^{\la 1\ra} \cong   \begin{matrix}  S^{\la 1\ra}  \cr 
				 S^{\la p, 1\ra}  \end{matrix} \ $.
		\end{enumerate}
	\end{enumerate}

The injective indecomposable $\Lambda_0$-modules can then be obtained by applying the simple preserving duality to the projective indecomposable $\Lambda_0$-modules. In particular, if we denote by $I_{\Lambda_0}^{\lambda}$ the injective hull of $\soc S^\lambda$, then $I_{\Lambda_0}^{\la s \ra}=P^{\la s\ra}$ for $2\leq s\leq p$ and $I_{\Lambda_0}^{\la u, v\ra}=P^{\la u, v \ra}$ whenever $\la u, v\ra \notin \{\la p, 1\ra,  \ldots,  \la 3, 1\ra, \ \la 3, 2\ra\}$.

\end{Lemma}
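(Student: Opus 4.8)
The plan is to deduce the structure of the projective indecomposable $\Lambda_0$-modules from the projectives of $B$ described in Theorem \ref{thm:proj} by identifying exactly which Specht quotients $S^{\la u,v\ra}$, $S^{\la v\ra}$, $S^{\la v,v\ra}$ occurring in those filtrations are labelled by a partition with more than $p$ parts. First I would recall that $\Lambda_0 = \End_{A_0}(Q)^{op}$ with $A_0$ the basic principal block of $S_k(p,2p)$, and that the Schur functor $F_{p,2p}$ (equivalently $L = \Hom_{S_k(p,2p)}(V^{\otimes 2p},-)$ restricted to $\mathcal F(\Cs)$) identifies $P_{\Lambda_0}^\lambda$ with $F_{p,2p}$ applied to the projective cover of the relevant simple $R_k(p,2p)$-module, and sends $\Cs$-filtered modules over the Schur/Ringel side to Specht-filtered modules; concretely, the projective $\Lambda_0$-modules are the images of the injective-projective-over-$\Lambda_0$ part of $V^{\otimes 2p}$, and are obtained from the $P^\lambda$ in $B$ by killing the Specht factors $S^\mu$ with $\mu \notin \Lambda^+(p,2p)$, i.e. with more than $p$ parts. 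This "factoring out" statement is exactly what the text asserts just before the lemma, so I may take it as given; the work is purely combinatorial bookkeeping on the abacus together with a submodule-structure check.

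Next I would settle which partitions have at most $p$ parts, using the abacus description of Subsection \ref{abacusnotation}. A partition in $B$ has $>p$ parts precisely when more than $p$ of its beads lie below the top row of the $p$-runner, two-bead abacus; by Lemma \ref{singularabacuspartitions} and the explicit form of the partitions, one checks that $\la v\ra$ and $\la u,v\ra$ (with $v<u$) each have at most $p$ parts, while $\la v,v\ra$ always has more than $p$ parts (its abacus has all $p$ beads of the first row pushed down, so the third row is full). Hence from each $P^\lambda$ in Theorem \ref{thm:proj} we delete precisely the Specht quotients labelled $\la r,r\ra$. Going through the seven cases: for $\lambda = \la s\ra$ ($2 \le s \le p$) and for $\lambda = \la u,v\ra$ not among $\{\la p,1\ra,\dots,\la 3,1\ra,\la 3,2\ra\}$, no $\la r,r\ra$ appears, so $P_{\Lambda_0}^\lambda = P^\lambda$, giving (i) and (ii); for $\lambda = \la s,1\ra$ ($3 \le s \le p$), parts (c)/(g) of Theorem \ref{thm:proj} show $P^{\la s,1\ra}$ has Specht factors $S^{\la s,1\ra}$ (top), $S^{\la s-1,1\ra}\oplus S^{\la s,s\ra}$, $S^{\la s-1,s-1\ra}$ (bottom), and deleting $S^{\la s,s\ra}$ and $S^{\la s-1,s-1\ra}$ leaves a uniserial module with Specht factors $S^{\la s,1\ra}$ over $S^{\la s-1,1\ra}$, which is (iii); similarly part (e) gives (iv) for $\la 3,2\ra$ after deleting $S^{\la 2,2\ra}, S^{\la 1,1\ra}$; and part (g) gives (v) for $\la 1\ra$ after deleting the bottom $S^{\la p,p\ra}$ (note $\la p,p\ra$ is $p$-singular with $>p$ parts), leaving $S^{\la 1\ra}$ over $S^{\la p,1\ra}$. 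Finally, the labelling set in (1) is the set of $p$-regular partitions of $2p$ in $\Lambda^+(p,2p)$, which by Lemma \ref{singularabacuspartitions} is $\{\la v\ra\} \cup \{\la u,v\ra : v<u\} \setminus \{\la 2,1\ra\}$.

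The one genuinely non-formal point — and the main obstacle — is confirming that after deleting the singular Specht quotients the remaining module really is a \emph{uniserial} module of the stated shape (rather than, say, a non-split extension with a different submodule lattice, or that the deletion is by a submodule as opposed to a quotient). For this I would argue as in the proof of the preceding lemma (the one identifying $P^{\la s\ra}$): the deleted Specht modules $S^{\la r,r\ra}$ sit at the \emph{bottom} of the Specht filtration of $P^\lambda$ (they are the socle layer of the filtration, being the most dominant-dominated labels), so they form a submodule, and the quotient by that submodule is Specht-filtered with the remaining labels, with a simple top inherited from $P^\lambda$; the Loewy/composition-factor information from \citep[Corollary B.6]{zbMATH07389875} together with the Gabriel quiver of $B$ (Example \ref{examplegabrielquivers2p}) then pins down the extension uniquely, exactly as in that earlier proof via $\dim \Ext_B^1 = 1$ between the relevant simples. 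For the injective statement I would simply invoke the simple-preserving duality ${}^\natural(-)$ on $\Lambda_0\m$ inherited from the cellular structure of $S_k(p,2p)$: it fixes each simple, hence sends $P_{\Lambda_0}^\lambda$ to $I_{\Lambda_0}^\lambda$, and fixes the self-dual modules $P^{\la s\ra}$ and $P^{\la u,v\ra}$ (these being images of self-dual characteristic tilting summands / self-dual Young modules), yielding $I_{\Lambda_0}^{\la s\ra} = P^{\la s\ra}$ and $I_{\Lambda_0}^{\la u,v\ra} = P^{\la u,v\ra}$ in the indicated ranges.
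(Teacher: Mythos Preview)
Your approach is essentially the one the paper uses: the lemma is stated in the paper as a direct consequence of the discussion immediately preceding it, which says that the projective $\Lambda_0$-modules are obtained from the $P^\lambda$ of Theorem~\ref{thm:proj} by factoring out exactly the Specht quotients $S^{\la r,r\ra}$ (these being precisely the labels with more than $p$ parts), and then reads off the five cases from parts (a)--(g) of that theorem. Your case analysis and the argument for the injective modules via the simple-preserving duality match this.

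One point in your justification is not quite right and should be fixed. You claim that the deleted Specht modules $S^{\la r,r\ra}$ ``sit at the bottom of the Specht filtration \ldots\ being the most dominant-dominated labels''. This is false as stated: for instance in $P^{\la 3,1\ra}$ the labels $\la 3,3\ra$ and $\la 2,1\ra$ are incomparable in the dominance order, so $\la 3,3\ra$ is not below $\la 2,1\ra$. The reason the $\la r,r\ra$ quotients nonetheless form a \emph{submodule} is not a dominance argument but the explicit Brauer-induced structure recorded in Theorem~\ref{thm:proj}: for $P^{\la s,1\ra}$ the middle layer is displayed as $S^{\la s-1,1\ra}\oplus S^{\la s,s\ra}$, so one may choose the filtration with $S^{\la s,s\ra}$ immediately above $S^{\la s-1,s-1\ra}$, giving a submodule with those two Specht factors and quotient $U{\la s,1\ra\choose \la s-1,1\ra}$; for $P^{\la 3,2\ra}$ the (unique) Brauer-induced quotient is $U{\la 3,2\ra\choose \la 3,1\ra}$, whose kernel has Specht factors $S^{\la 2,2\ra}$, $S^{\la 1,1\ra}$; and for $P^{\la 1\ra}$ the module is uniserial with $S^{\la p,p\ra}$ at the bottom. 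Alternatively and more structurally, $I_p$ is a cell ideal of the cellular algebra $B$, so $I_pP^\lambda$ is automatically cell-filtered with exactly the Specht factors labelled by partitions with more than $p$ parts. Either route repairs the gap; the rest of your proof goes through.
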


\subsection{Young $\Lambda_0$-modules}

We describe the Young modules for $B$ labelled by partitions with less than or equal to $p$ parts. 

\begin{enumerate}
	\item We start with the projective Young modules. These are precisely those labelled by a partition which is simultaneously $p$-regular and conjugate to a $p$-regular partition of $2p$.
	
	Every indecomposable projective for the block  $B$ is a Young module. Most of these remain projective
	for the algebra $\Lambda_0$, and hence they are Young modules. 
	
	In Theorem \ref{thm:proj},  we have identified which Specht modules are  submodules of an indecomposable projective module. This gives the identification
			of the projective $B$-modules which remain projective for $\Lambda_0$ as Young modules.

\end{enumerate}

	\begin{Cor} Let $\lambda\in \Lambda^+(p, 2p)$ with $p$-weight two. \label{3dot2dot1}
		\begin{enumerate}[(a)]
			\item Let $\lambda \not\in \{ \la a, 1\ra, \ (3\leq a\leq p), \ \la 3, 2\ra, \ \la 1\ra \}$,  and  $\lambda = \la a, b\ra$.
			\begin{enumerate}[(i)]
				\item If $a-b\geq 2$, then
				$P^{\la a, b\ra } \cong Y^{\la a-1, b-1\ra}.$
				\item If   $a-b=1$, then $P^{\la a, b\ra}\cong  Y^{\la a-2, b-2\ra}$.
			\end{enumerate} 
			\item If   $\lambda = \la a \ra$ with $1<a<p$, then  $P^{\la a\ra} \cong Y^{\la p, a-1\ra}$.
			Moreover, $P^{\la p\ra} \cong Y^{\la p-1, p-2\ra}$.
		\end{enumerate}
\end{Cor}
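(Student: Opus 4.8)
The plan is to extract the structural information from Theorem~\ref{thm:proj} and compare it with the characterisation of Young modules as the unique indecomposable summands of Young permutation modules $M^\lambda$ containing the corresponding Specht module $S^\lambda$. The key observation is the one already exploited in the discussion following Theorem~\ref{thm:proj} and in Subsection~\ref{restrictingtoBrauertreealgebras}: an indecomposable projective $B$-module $P^{\lambda}$ that remains projective over $\Lambda_0$ is self-dual (being projective-injective with the same simple top and socle forced by the Brauer-tree structure), has a Specht filtration, and its bottom Specht quotient $S^{\mu}$ (the one sitting as a submodule) is the minimal label in the dominance order among its Specht factors. Since $P^\lambda$ is a Young module (every indecomposable projective of $B$ is), it must be $Y^{\mu}$ for that minimal label $\mu$, because $Y^\mu$ is characterised as the unique indecomposable summand of $M^\mu$ containing $S^\mu$, and $S^\mu \hookrightarrow P^\lambda$ while $P^\lambda$ is indecomposable with Specht filtration.

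Concretely, I would go case by case through Theorem~\ref{thm:proj}, reading off the bottom Specht quotient in each displayed diagram. For part~(f), $P^{\la r,s\ra}$ has bottom Specht quotient $S^{\la r-1,s-1\ra}$, giving (a)(i): $P^{\la a,b\ra}\cong Y^{\la a-1,b-1\ra}$ when $a-b\geq 2$ (and $\la a,b\ra\notin\{\la a,1\ra, \la 3,2\ra\}$, which is exactly the excluded set since those give non-projective $\Lambda_0$-modules, handled in Lemma~\ref{lemmaprojectives}). For part~(d), $P^{\la s+1,s\ra}$ has bottom Specht quotient $S^{\la s-1,s-2\ra}$, giving (a)(ii): $P^{\la a,b\ra}\cong Y^{\la a-2,b-2\ra}$ when $a-b=1$ (with $3\leq s\leq p-1$, i.e. the relevant range of projective-over-$\Lambda_0$ modules; the case $\la p,p-1\ra$ from part (d) with $s=p-1$ fits, and $\la 2,1\ra$ is excluded). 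For part~(b), $P^{\la s\ra}$ with $2\leq s\leq p-1$ has bottom Specht quotient $S^{\la p,s-1\ra}$, giving $P^{\la s\ra}\cong Y^{\la p,s-1\ra}$; for part~(a), $P^{\la p\ra}$ has bottom Specht quotient $S^{\la p-1,p-2\ra}$, giving $P^{\la p\ra}\cong Y^{\la p-1,p-2\ra}$.

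The main thing requiring care — and the only real obstacle — is justifying the identification ``indecomposable projective with a Specht filtration whose minimal factor is $S^\mu$'' $\Rightarrow$ ``$\cong Y^\mu$''. One direction of this is clean: such a projective is a summand of $M^\mu$ (since $P^\lambda$ surjects onto its top simple, which also tops $M^\mu$ via $S^\mu$, or more directly via the Young module theory recalled in Subsection~\ref{sec2dot3}), it is indecomposable, and it contains $S^\mu$; hence it equals $Y^\mu$ by the uniqueness property stated there. I would also note that the labels $\mu$ produced are pairwise distinct and that all of them lie in $\Lambda^+(p,2p)$ (they have at most $p$ parts by the abacus computation in Subsection~\ref{abacusnotation}), so no $\mu$ labelling a non-projective Young module is hit; this is consistent with the count of projective-over-$\Lambda_0$ modules in Lemma~\ref{lemmaprojectives}. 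Finally, one should double-check boundary cases: $\la a,1\ra$ for $3\leq a\leq p$, $\la 3,2\ra$, and $\la 1\ra$ are exactly the partitions whose projectives (parts (c),(e),(g)) do \emph{not} stay projective over $\Lambda_0$, so they are correctly excluded in the statement, and there is no overlap or omission.
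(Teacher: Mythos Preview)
Your proposal is correct and follows essentially the same approach as the paper: read off the bottom Specht submodule from each case of Theorem~\ref{thm:proj} and identify the projective as the Young module with that label. The paper's proof is in fact terser than yours---it simply cites parts (a), (b), (d), (f) of Theorem~\ref{thm:proj} without spelling out the ``bottom Specht $\Rightarrow$ Young label'' step, relying on the sentence immediately preceding the Corollary (``In Theorem~\ref{thm:proj}, we have identified which Specht modules are submodules of an indecomposable projective. This gives the identification\ldots''). One small remark: your justification that $P^\lambda$ is a summand of $M^\mu$ is slightly garbled (``$P^\lambda$ surjects onto its top simple, which also tops $M^\mu$ via $S^\mu$'' is not quite the right direction); the clean version is that $M^\mu \twoheadrightarrow S^\mu \hookrightarrow P^\lambda$ is nonzero and $P^\lambda$ is injective, or more simply that every indecomposable projective is already known to be some $Y^\nu$ and the minimal Specht label in its filtration forces $\nu=\mu$.
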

\begin{proof}
	Part (a)(i) follows from Theorem \ref{thm:proj}(f). Part (a)(ii) follows from  Theorem \ref{thm:proj}(d). Part (b) follows from Theorem
	\ref{thm:proj}(b) and the particular case by Theorem  \ref{thm:proj}(a).
\end{proof}

Note $P^{\la 4, 3\ra} \cong Y^{\la 2, 1\ra}$ and $\la 2, 1\ra$ is the only $p$-singular partition in $\Lambda_0$ (see Lemma \ref{singularabacuspartitions}). Hence, $S^{\la 2, 1\ra}$ has composition length $2$, the socle is
$D^{\la 4, 3\ra}$ and the top is $D^{\la 3, 1\ra}$. 

\begin{enumerate}
	\item[(2)] We consider now Young modules which are not projective. 
	
	They are labelled as  $Y^{\la a\ra}$ for $1\leq a\leq p$, together with $Y^{\la p, p-1\ra}$. We have explained in Subsection \ref{restrictingtoBrauertreealgebras} that the Young modules $Y^{\la a\ra}$ for $a< p$ 
	are induced from simple Specht modules of the Brauer tree algebras whose  cores are hook partitions. 
	The socle of $Y^{\la a\ra}$ for $a<p$ is the socle of $S^{\la a\ra}$ which is $D^{\la a+1\ra}$.
	
	The socle of $Y^{\la p, p-1\ra}$ is the socle of the Specht module $S^{\la p, p-1\ra}$, that is, $D^{\la p-1\ra}$.
	
	In terms of their Specht filtrations, this means that we have the following:
	
	\begin{multicols}{2}
			\begin{enumerate}[(i)]
			\item $Y^{\la p \ra}\cong S^{\la p \ra}\cong k$;\\
			\item $Y^{\la p, p-1 \ra}\cong \begin{matrix}  S^{\la p-1\ra}  \cr 
				S^{\la p, p-1\ra}  \end{matrix}$; \\[0.1em]
			\item $Y^{\la a \ra}\cong \begin{matrix}  S^{\la a+1\ra}  \cr 
				S^{\la a\ra}  \end{matrix} \ $ for $1\leq a\leq p-1$.
		\end{enumerate}
	\end{multicols}

\end{enumerate}

\begin{Cor} The indecomposable summands of ${}_{\Lambda_0}DQ$ are
	\begin{enumerate}[(a)]
		\item the trivial module $Y^{\la p\ra}$, 
		\item the projective modules $P^{\la u, v\ra}$ and
		$\la u, v\ra$ not in $\{ \la 1\ra, \la u, 1\ra \  (3\leq u\leq p), \la 3, 2\ra\}$.
		\item the modules $Y^{\la a\ra}$ for $1\leq a < p$, and 
		$Y^{\la p, p-1\ra}$.
	\end{enumerate}
\end{Cor}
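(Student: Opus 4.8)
The plan is to identify the indecomposable direct summands of ${}_{\Lambda_0}DQ$ by reinterpreting the decomposition of ${}_{\Lambda_0}DQ$ as a direct sum of Young $B$-modules and then sorting these into the three listed families. First I would recall that $DQ$, as a left $B$-module, is by construction the direct sum of all Young modules $Y^\lambda$ whose label $\lambda$ lies in the set $(\{\la v\ra\}\cup\{\la u,v\ra : v<u\}\cup\{\la v,v\ra\})\cap\Lambda^+(p,2p)$; this is exactly the definition of $Q$ given at the start of Section \ref{SpechtandYoungmodulesoverLambda}. So the task reduces to describing each $Y^\lambda$ for $\lambda$ in this index set and checking that, as a $\Lambda_0$-module, the resulting list of indecomposables is precisely the union of (a), (b), (c).

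The key step is to invoke the explicit classification of Young $B$-modules already assembled in the subsection ``Young $\Lambda_0$-modules''. Corollary \ref{3dot2dot1} tells us that for every $p$-regular label $\la u,v\ra$ outside the exceptional set $\{\la 1\ra,\la a,1\ra\ (3\le a\le p),\la 3,2\ra\}$ the Young module $Y^{\la u,v\ra}$ (or rather the Young module obtained with the appropriate index shift) is one of the projective indecomposables $P^{\la u',v'\ra}$, and conversely these projectives exhaust the projective non-semisimple Young modules in this index range; this gives family (b). For the labels $\la v,v\ra$, which by Lemma \ref{singularabacuspartitions} are $p$-singular except in degenerate cases, $Y^{\la v,v\ra}$ coincides (after the index shift in Corollary \ref{3dot2dot1}) with a projective $P^{\la u,v\ra}$ already counted in (b), so these contribute nothing new. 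The remaining labels are the ``edge'' partitions $\la a\ra$ for $1\le a\le p$ together with $\la p,p-1\ra$; part (2) of the subsection, via the Brauer-induced description, identifies $Y^{\la p\ra}\cong k$ (family (a)), and $Y^{\la a\ra}$ for $1\le a<p$ and $Y^{\la p,p-1\ra}$ as the explicitly listed two-Specht-layer modules (family (c)). Finally I would note that all the modules in (b) remain projective for $\Lambda_0$ (they are precisely the projective $B$-modules not among those in Parts (c), (e), (g) of Theorem \ref{thm:proj}), hence are genuine indecomposable $\Lambda_0$-modules, while the modules in (a) and (c) are indecomposable because each has simple top and simple socle as recorded in Subsection \ref{restrictingtoBrauertreealgebras}.

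The main obstacle I anticipate is purely bookkeeping: one must check that the index-shifts in Corollary \ref{3dot2dot1}(a)(i)--(ii) and (b) match up so that the projective modules $P^{\la u,v\ra}$ appearing in family (b) are counted exactly once and that no label in the index set of $Q$ is accidentally omitted or double-counted — in particular that the $p$-singular labels $\la v,v\ra$ and the non-$p$-regular label $\la 2,1\ra$ (which by the remark after Corollary \ref{3dot2dot1} satisfies $P^{\la 4,3\ra}\cong Y^{\la 2,1\ra}$) do not produce spurious summands. Once this matching of labels across the abacus-notation conventions is done carefully, the corollary follows immediately by assembling the pieces; no homological input beyond Theorem \ref{thm:proj} and Corollary \ref{3dot2dot1} is needed.
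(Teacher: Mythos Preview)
Your approach is essentially the same as the paper's: recall that the summands of ${}_{\Lambda_0}DQ$ are the Young modules $Y^\lambda$ with $\lambda$ in the index set for $Q$, then sort them using Corollary~\ref{3dot2dot1} and the explicit description of non-projective Young modules. The paper's own proof is two sentences invoking exactly these ingredients.

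One bookkeeping point is off, though harmlessly so. You treat the labels $\la v,v\ra$ as if they occurred in the index set and then argue they ``coincide with a projective already counted in (b).'' This is not quite right: the partitions $\la v,v\ra$ have more than $p$ parts (as noted just before Lemma~\ref{lemmaprojectives}), so they are removed by the intersection with $\Lambda^+(p,2p)$ in the definition of $Q$ and never appear among the summands at all. In particular, Corollary~\ref{3dot2dot1} does not produce any identification of the form $Y^{\la v,v\ra}\cong P^{\la u,v\ra}$; the index set is simply $\{\la v\ra:1\le v\le p\}\cup\{\la u,v\ra:1\le v<u\le p\}$, as the paper's proof states. Once this is corrected, your argument is complete.
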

\begin{proof}
	The indecomposable direct summands of ${}_{\Lambda_0}DQ$ are the Young modules $Y^\lambda$ with $\lambda\in {\{\la v\ra\colon 1\leq v\leq p\}\cup \{\la u, v\ra\colon 1\leq v<u\leq p\}}$. So, the result follows from Corollary \ref{3dot2dot1}.
\end{proof}

\subsection{Projective Modules as Syzygies of Young Modules}\label{reducing the non injective}

In this part, we are interested in establishing coresolutions of the projective modules of $\Lambda_0$ by modules in $\add DQ$. To do this, we prove that they arise as syzygies of non-projective Young modules.
As a byproduct, we obtain the projective dimensions of all Young $\Lambda_0$-modules.

\begin{Prop}   \label{5dotonedotone}
	We have a bijection between non-injective projective modules of $\Lambda_0$ and the set of  non-projective Young modules  distinct from $Y^{\la p\ra}$:
	
	\begin{enumerate}[(i)]
		\item $\Omega^{p-2}_{\Lambda_0} (Y^{\la a \ra})$ is the non-injective projective module $P_{\Lambda_0}^{ \la a+1, 1\ra}$ for $a=2, 3, \ldots, p-1$.
		\item $\Omega^{p-2}_{\Lambda_0}(Y^{\la 1\ra})$ is the non-injective projective module
		$P_{\Lambda_0 }^{\la 3, 2\ra}$.
		\item $\Omega_{\Lambda_0}^{p-2}(Y^{\la p, p-1\ra})$ is the non-injective
		projective module $P_{\Lambda_0}^{\la 1\ra}$. 
	\end{enumerate}
	Moreover, the following assertions hold.
	\begin{enumerate}[(a)]
		\item The Young module $Y^{\la a\ra}$ and all its syzygies are Brauer induced from the tree with core $(a, 1^{p-a-1})$  for $1\leq a\leq p-1$.
		\item The Young module $Y^{\la p, p-1\ra}$ and its syzygies are  not Brauer induced. 
	\end{enumerate}
\end{Prop}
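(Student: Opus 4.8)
The plan is, for each of the $p$ non-projective Young $\Lambda_0$-modules, to write down the first $p-2$ terms of its minimal projective resolution over $\Lambda_0$ and to recognise the $(p-2)$-th syzygy as one of the non-injective projective $\Lambda_0$-modules. The asserted bijection is then the map $Y\mapsto\Omega_{\Lambda_0}^{p-2}(Y)$, which is automatically onto because both sides have exactly $p$ members: the non-projective Young modules different from $Y^{\langle p\rangle}$ are $Y^{\langle 1\rangle},\dots,Y^{\langle p-1\rangle}$ and $Y^{\langle p,p-1\rangle}$, while the non-injective projectives are $P_{\Lambda_0}^{\langle 3,1\rangle},\dots,P_{\Lambda_0}^{\langle p,1\rangle}$, $P_{\Lambda_0}^{\langle 3,2\rangle}$ and $P_{\Lambda_0}^{\langle 1\rangle}$ (Lemma \ref{lemmaprojectives}).

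For $1\le a\le p-1$ one uses that $Y^{\langle a\rangle}\cong S_0\uparrow^B$, with $S_0>\dots>S_{p-1}$ the Specht modules of the Brauer tree block $B_{a+1}$ of $k\cS_{2p-1}$ (core $(a,1^{p-a-1})$), and that inducing the sequences $0\to S_{i+1}\to P_i\to S_i\to 0$ gives $\Omega_B^{j}(Y^{\langle a\rangle})\cong S_j\uparrow^B$ with projective cover $P^{v_j}=P_j\uparrow^B$, where $v_0,\dots,v_{p-2}$ are the vertices of the subquiver listed in Subsection \ref{restrictingtoBrauertreealgebras}; in particular $v_0=\langle a+1\rangle$ and $v_{p-2}=\langle a+1,1\rangle$ (or $\langle 3,2\rangle$ when $a=1$). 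First I would check, by inspecting that vertex list, that none of the \emph{interior} vertices $v_0,\dots,v_{p-3}$ lies among $\langle 1\rangle,\langle p,1\rangle,\dots,\langle 3,1\rangle,\langle 3,2\rangle$, so $P^{v_0},\dots,P^{v_{p-3}}$ are projective-injective over $\Lambda_0$; hence the $\Lambda_0$- and $B$-resolutions of $Y^{\langle a\rangle}$ agree up to degree $p-2$ and $\Omega_{\Lambda_0}^{j}(Y^{\langle a\rangle})\cong S_j\uparrow^B$ for $0\le j\le p-2$, which are Brauer induced from the tree with core $(a,1^{p-a-1})$; this is (a). To prove (i), (ii), I would determine the two Specht quotients of each $S_j\uparrow^B$ by induction on $j$: the sequence $0\to S_{j+1}\uparrow^B\to P^{v_j}\to S_j\uparrow^B\to 0$ together with the Specht filtration of $P^{v_j}$ from Theorem \ref{thm:proj} identifies the Specht quotients of $S_{j+1}\uparrow^B$ with those of $P^{v_j}$ not used up by $S_j\uparrow^B$. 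Pushed to $j=p-2$, this shows $S_{p-2}\uparrow^B$ has simple top $D^{v_{p-2}}$ and Specht quotients $S^{v_{p-2}}$ (as a quotient) and $S^{\langle a,1\rangle}$ (as a submodule) (resp.\ $S^{\langle 3,1\rangle}$ when $a=1$) — exactly the Specht quotients of $P_{\Lambda_0}^{v_{p-2}}$ by Lemma \ref{lemmaprojectives}(2)(iii)--(iv). So the projective cover map $P_{\Lambda_0}^{v_{p-2}}\twoheadrightarrow S_{p-2}\uparrow^B$ is a surjection of $\Lambda_0$-modules of equal $k$-dimension, hence an isomorphism, proving (i), (ii), and also $\pdim_{\Lambda_0}Y^{\langle a\rangle}=p-2$.

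The module $Y^{\langle p,p-1\rangle}$ must be treated separately, starting from its filtration $0\to S^{\langle p,p-1\rangle}\to Y^{\langle p,p-1\rangle}\to S^{\langle p-1\rangle}\to 0$ and projective cover $P^{\langle p-1\rangle}$ (Theorem \ref{thm:proj}(b)); the analogous induction, using at the $j$-th step the projective-injective $\Lambda_0$-module $P^{\langle p-1-j\rangle}$ and Theorem \ref{thm:proj}(b), gives that $\Omega_{\Lambda_0}^{j}(Y^{\langle p,p-1\rangle})$ has Specht quotients $S^{\langle p-1-j\rangle}$ (quotient) and $S^{\langle p,p-1-j\rangle}$ (submodule) for $1\le j\le p-3$, whence $\Omega_{\Lambda_0}^{p-2}(Y^{\langle p,p-1\rangle})$ has Specht quotients $S^{\langle 1\rangle}$ and $S^{\langle p,1\rangle}$, the Specht quotients of $P_{\Lambda_0}^{\langle 1\rangle}$ by Lemma \ref{lemmaprojectives}(2)(v); the same dimension argument forces $\Omega_{\Lambda_0}^{p-2}(Y^{\langle p,p-1\rangle})\cong P_{\Lambda_0}^{\langle 1\rangle}$, which is (iii). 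For (b) I would note that the Specht quotients appearing here are not labelled by a pair of partitions of $2p$ arising from a common partition of $2p-1$ by adding a box (a direct abacus check), so none of $Y^{\langle p,p-1\rangle}$ and its syzygies has the form $S\uparrow^B$. Assembling the three cases shows the images $P_{\Lambda_0}^{\langle 3,1\rangle},\dots,P_{\Lambda_0}^{\langle p,1\rangle},P_{\Lambda_0}^{\langle 3,2\rangle},P_{\Lambda_0}^{\langle 1\rangle}$ are distinct and exhaust the non-injective projectives, so $Y\mapsto\Omega_{\Lambda_0}^{p-2}(Y)$ is the claimed bijection.

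I expect the main obstacle to be the bookkeeping of the Specht quotients of the Brauer induced modules $S_j\uparrow^B$ along the whole chain — in particular across the ``turn'' in the subquiver where the chain stops decreasing the first index of the label and begins decreasing the second — and the careful use of the $\oplus$ versus $\diagup$ data in Theorem \ref{thm:proj} to decide, at each step, which Specht quotient of $P^{v_j}$ is a submodule (hence survives into the next syzygy) and which is a quotient. The indecomposability and simplicity of the tops of the syzygies, and the abacus check needed for (b), are routine once this is in place.
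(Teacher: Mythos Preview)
Your approach is correct and is essentially the same as the paper's: both construct the identical minimal projective resolutions of the non-projective Young modules over $\Lambda_0$, only read in opposite directions. The paper presents them as explicit coresolutions of each non-injective projective (simply listing the terms and referring to Theorem~\ref{thm:proj} and Lemma~\ref{lemmaprojectives} for verification), whereas you derive the same sequences by systematically inducing the Brauer-tree resolutions $0\to S_{i+1}\to P_i\to S_i\to 0$ and tracking Specht quotients step by step. Your framing has the advantage that assertion~(a) drops out immediately from the construction, and you give an explicit abacus argument for~(b); the paper's proof does not address (a) and (b) beyond what is implicit in the surrounding discussion of Brauer induced modules in Subsection~\ref{restrictingtoBrauertreealgebras}. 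Conversely, the paper's bare listing of the sequences is quicker to verify directly and makes the ``turn'' in the resolution (where the labels switch from decreasing the first index to decreasing the second) visible at a glance, which is useful in the later applications in Subsection~\ref{coresolvingsingularSpechtmodule}.
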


\begin{proof}
	To describe the resolutions using the Gabriel quiver, begin with the arrow \mbox{$\la a+1\ra \to \la a\ra$} located on the left edge of the quiver. From there, follow parallel arrows to that progressing upward through the quiver until reaching the top. At that point, it changes direction and continues downward to the right edge of the quiver.
	See Part (3) of the proof below.

	We write down the terms in the coresolution of the projective non-injective
	modules. 
	To check that these are correct, one may consult Theorem \ref{thm:proj} and Lemma \ref{lemmaprojectives}.
	
	\begin{enumerate}[(1)]
		\item Consider $P_{\Lambda_0}^{\la 1\ra}$. We have an exact sequence with terms in 
		${\rm add} D(Q_{\Lambda_0})$ 
		$$0 \to P_{\Lambda_0}^{\la 1\ra}  \to P^{\la 2\ra} \to P^{\la 3\ra} \to \cdots \to P^{\la p-1\ra} \to Y^{\la p, p-1\ra} \to 0.
		$$
		\item Consider  $P_{\Lambda_0}^{ \la 3, 2\ra}$. We have
		a coresolution with terms in ${\rm add} D(Q_{\Lambda_0})$
		$$0 \to P_{\Lambda_0}^{\la 3, 2\ra} \to P^{\la 4, 2\ra} \to P^{\la 5, 2\ra} \to \cdots 
		\to P^{\la p, 2\ra} \to P^{\la 2\ra}  \to Y^{\la 1\ra} \to 0.$$
		\item Let $2 <  a \leq p-3$. Then the first part of a coresolution
		for $P_{\Lambda_0}^{\la a+1, 1\ra}$ is 
		$$0 \to P_{\Lambda_0}^{\la a+1, 1\ra} \to P^{\la a+1, 2\ra} \to \cdots \to P^{\la a+1, a-1\ra} $$
		and the last quotient is $U{\la a+1, a-1\choose \la a, a-1\ra}$.
		This has injective hull $P^{\la a+2, a+1\ra}$. Hence, the coresolution continues 
		$$\cdots \to P^{\la a+2, a+1\ra} \to P^{\la a+3, a+1\ra} \to \cdots \to P^{\la p, a+1\ra} \to P^{\la a+1 \ra}
		\to Y^{\la a\ra} \to 0.
		$$
		\item Consider $a=2$. 
		Then we have the coresolution
		$$0\to P_{\Lambda_0}^{\la 3, 1\ra} \to P^{\la 4, 3\ra} \to P^{\la 5, 3\ra} \to \cdots \to
		P^{\la p, 3\ra} \to P^{\la 3\ra} \to Y^{\la 2\ra} \to 0.
		$$
		\item This leaves two coresolutions, they are
		
		$$0 \to P_{\Lambda_0}^{ \la p-1, 1\ra} \to P^{\la p-1, 2\ra} \to \cdots \to  P^{\la p-1, p-3\ra} 
		\to P^{\la p, p-1\ra}  \to P^{\la p-1\ra} \to Y^{\la p-2\ra} \to 0.$$
		and 
		$$0\to P_{\Lambda_0}^{\la p, 1\ra} \to P^{\la p, 2\ra} \to \cdots \to  P^{\la p, p-2\ra} \to P^{\la p\ra} \to Y^{\la p-1\ra} \to 0.$$
	\end{enumerate}
\end{proof}

\begin{Remark} \label{pdimYoungmodules}
	We label the terms of the coresolution starting with $0$, and we see that
	the Young module occurs in step $p-2$. Hence, all non-projective Young modules
	other than $Y^{\la p\ra}$ have projective dimension $p-2$.
\end{Remark}

Surprisingly, Proposition \ref{5dotonedotone} gives a completely different picture about the homological structure of $\Lambda_0$ in contrast with its counterpart in characteristic two, the principal block of $TL_{k, 4}(0)$.
 The Temperley-Lieb algebra $TL_{k, 4}(0)$ studied in \cite{CE24} is very far from being Iwanaga-Gorenstein. On the other hand, we can use Proposition \ref{5dotonedotone} to show that the algebra $\Lambda_0$ is Iwanaga-Gorenstein of infinite global dimension. 

\begin{Theorem}Let $k$ be an algebraically closed field with characteristic $p>3$. Then,
	 $\Lambda_k(p, 2p)$ and its principal block $\Lambda_0$ are Iwanaga-Gorenstein algebras of infinite global dimension. \label{thm3:3:2}
\end{Theorem}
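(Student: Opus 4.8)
The plan is to establish two facts: first, that $\Lambda_0$ has finite injective dimension as a module over itself (and over its opposite), which gives Iwanaga-Gorenstein; and second, that $\Lambda_0$ has a simple module of infinite projective dimension, which forces infinite global dimension. Since $\Lambda_k(p,2p)$ decomposes as a direct sum of blocks, with all non-principal blocks being semisimple or of finite representation-type with the required properties (indeed, by the Schur-Weyl set-up the non-principal blocks of $k\cS_{2p}$ restrict to semisimple algebras here), it suffices to treat the principal block $\Lambda_0$, and then note that $\Lambda_0$ carries the simple-preserving duality $\,{}^\natural(-)$, so $\injdim_{\Lambda_0}\Lambda_0=\injdim_{\Lambda_0^{op}}\Lambda_0^{op}$ automatically; we only need to bound $\injdim_{\Lambda_0}\Lambda_0$.

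\textbf{Bounding the injective dimension.} For Iwanaga-Gorenstein, I would show every indecomposable projective $\Lambda_0$-module has finite injective dimension. By Lemma \ref{lemmaprojectives}, the projective indecomposable $\Lambda_0$-modules split into two classes: those that are simultaneously projective and injective (the $P^{\la u,v\ra}$ with $\la u,v\ra$ not on the right edge, and $P^{\la s\ra}$ for $2\le s\le p$), which have injective dimension $0$; and the non-injective ones $P_{\Lambda_0}^{\la 1\ra}$, $P_{\Lambda_0}^{\la 3,2\ra}$, $P_{\Lambda_0}^{\la a+1,1\ra}$ for $2\le a\le p-1$. For the latter, Proposition \ref{5dotonedotone} exhibits each as $\Omega^{p-2}_{\Lambda_0}$ of a non-projective Young module; applying the duality $\,{}^\natural(-)$, which sends $\add DQ$ to $\add Q$ — more precisely, sends the Young modules $Y^\lambda$ to themselves and the projective-injective $\Lambda_0$-modules to themselves — the coresolutions in the proof of Proposition \ref{5dotonedotone} (all of whose middle terms lie in $\add D(Q_{\Lambda_0})$, hence are projective-injective, hence injective over $\Lambda_0$) dualise to finite injective coresolutions of the non-injective projectives. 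Concretely, $\,{}^\natural(-)$ applied to a coresolution $0\to P_{\Lambda_0}^{\lambda}\to I_1\to\cdots\to I_{p-2}\to Y^{\mu}\to 0$ with $I_j$ injective gives a sequence realising $P_{\Lambda_0}^\lambda$ with an injective coresolution of length bounded in terms of $p$ together with $\injdim$ of the dual Young module, which is finite because the dual Young module has a finite projective resolution (again by Proposition \ref{5dotonedotone}, its projective dimension is $p-2$) and $\Lambda_0$ is its own dual. Hence $\injdim_{\Lambda_0}\Lambda_0<\infty$.

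\textbf{Infinite global dimension.} It remains to produce a module of infinite projective dimension; equivalently, a simple module whose minimal projective resolution never terminates. The natural candidate is $\top Y^{\la p\ra}$, the trivial module, or more precisely the simple module sitting at the end of the "cyclic" pattern of the Gabriel quiver's right edge. The syzygy pattern described in Proposition \ref{5dotonedotone} — follow arrows up the left edge, across the top, down the right edge — together with the fact that the non-injective projectives $P_{\Lambda_0}^{\la a+1,1\ra}$, $P_{\Lambda_0}^{\la 3,2\ra}$, $P_{\Lambda_0}^{\la 1\ra}$ are \emph{not} injective and have non-semisimple structure with a Specht submodule, suggests that taking syzygies of the simple module $D^{\la p\ra}=\top Y^{\la p\ra}$ cycles back into itself: the resolution enters a loop. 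To make this precise I would track the syzygies of $D^{\la p\ra}$ explicitly, using the Specht-filtration descriptions of the projectives in Lemma \ref{lemmaprojectives} and Proposition \ref{5dotonedotone}, and exhibit an integer $r$ and a proper nonzero syzygy $\Omega^r(D^{\la p\ra})$ that contains $D^{\la p\ra}$ as a composition factor in a way that recurs, so the projective resolution has bounded-dimension but non-vanishing terms forever. Alternatively, and more cleanly, one can argue by contradiction: if $\gldim\Lambda_0<\infty$ then $\Lambda_0$ would be quasi-hereditary with the given duality, but a quasi-hereditary algebra with a simple-preserving duality whose projective indecomposables include self-dual non-injective ones (the $P_{\Lambda_0}^{\la a+1,1\ra}$ are not injective, contradicting that over an algebra of finite global dimension the projective-injectives would have to account for... ) — I would instead simply note that $\injdim_{\Lambda_0}\Lambda_0=\findim\Lambda_0=p-2>0$ by the computation above combined with Proposition \ref{5dotonedotone}, whereas finite global dimension would force $\injdim_{\Lambda_0}\Lambda_0=\gldim\Lambda_0$ and simultaneously every module to have finite projective dimension; the non-injective projective modules are then not injective yet the algebra is Gorenstein of positive dimension, which is consistent only with infinite global dimension (a finite global dimension algebra that is Gorenstein of dimension $>0$ cannot exist, since finite global dimension gives $\injdim A=\gldim A$ and all modules of finite projective dimension, making $A$ "regular", i.e. of global dimension $0$ is false but Gorenstein dimension would equal global dimension — the point being that $\findim = \injdim_A A = p-2$ while a finite global dimension algebra has $\findim = \gldim$, so we would get $\gldim \Lambda_0 = p-2$; this is contradicted by exhibiting one module of projective dimension $> p-2$, for instance a suitable simple module whose resolution I track through $p-1$ steps and show is still nonzero).

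\textbf{Main obstacle.} The delicate point is the infinite global dimension claim: I expect the cleanest route is the explicit loop in the syzygies of a well-chosen simple module, and the bookkeeping there — keeping track of Specht subquotients through repeated applications of $\Omega_{\Lambda_0}$, using the non-injective projectives of Lemma \ref{lemmaprojectives}(2)(iii)--(v) — is the part that requires genuine care rather than a formal argument. The Gorenstein part, by contrast, is essentially a dualisation of Proposition \ref{5dotonedotone} via $\,{}^\natural(-)$ and should go through routinely once one checks that $\,{}^\natural(-)$ fixes the relevant Young modules and projective-injectives.
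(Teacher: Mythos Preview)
Your Iwanaga--Gorenstein argument is essentially the paper's: use Proposition~\ref{5dotonedotone} to see the non-injective projectives as $(p-2)$-th syzygies of Young modules, observe the middle terms are projective-injective, and invoke the simple-preserving duality to pass from finite projective dimension of the Young modules to finite injective dimension of those projectives. That part is fine.

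For \emph{infinite global dimension}, however, your proposal diverges from the paper and contains genuine errors. First, your claim that $\injdim_{\Lambda_0}\Lambda_0=p-2$ is wrong: the coresolutions in Proposition~\ref{5dotonedotone} terminate in a non-projective, non-injective Young module, not an injective one, so you must add the injective dimension of that Young module (which is again $p-2$), giving $\injdim_{\Lambda_0}\Lambda_0=2p-4$ (this is Corollary~\ref{finitisticlamdbazero}). Second, your parenthetical assertion that ``a finite global dimension algebra that is Gorenstein of dimension $>0$ cannot exist'' is false --- every algebra of finite global dimension $n$ is Iwanaga--Gorenstein of Gorenstein dimension $n$. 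Your salvage (``exhibit a module of projective dimension $>p-2$'') would need the correct bound $2p-4$ and an explicit module exceeding it, which you do not provide; the syzygy-loop idea for $D^{\la p\ra}$ could in principle be made to work but is never carried out.

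The paper sidesteps all of this with a one-line structural argument: $\Lambda_0$ is cellular with cell modules the Specht modules, and by K\"onig--Xi a cellular algebra of finite global dimension is quasi-hereditary, hence has as many cell modules as simples; but $\Lambda_0$ has one extra Specht module (the $p$-singular $S^{\la 2,1\ra}$), contradiction. This is both shorter and avoids any explicit resolution chasing. For the non-principal blocks, your remark that they ``restrict to semisimple algebras'' is not accurate; the paper observes instead that they are Ringel duals of non-principal blocks of $S_k(p,2p)$, hence quasi-hereditary with finite global dimension.
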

\begin{proof}
	We will start by showing that $\Lambda_0$ is Iwanaga-Gorenstein. By Proposition \ref{5dotonedotone}, all the non-projective Young modules except $Y^{\la p\ra}$ have finite projective dimension over $\Lambda_0$. Observe that the simple preserving duality interchanges projective with injective modules. Since the Young modules are self-dual with respect to the simple preserving duality functor, all the Young modules, except $Y^{\la p\ra}$, have finite injective dimension. Observe that the middle terms in the exact sequences (1)-(5) of the proof of Proposition \ref{5dotonedotone} are projective-injective by Lemma \ref{lemmaprojectives} (since they are the restriction from a projective module over a self-injective algebra in the quotient algebra). Thus, it follows from the exact sequences (1)-(5) of the proof of Proposition \ref{lemmaprojectives} that the projective modules $P_{\Lambda_0}^{\la 1\ra}$, $P_{\Lambda_0 }^{\la 3, 2\ra}$ and $P_{\Lambda_0}^{ \la a+1, 1\ra}$, for $a=2, 3, \ldots, p-1$, have finite injective dimension. Since all the other projective indecomposable $\Lambda_0$-modules are injective (since they are, in particular, Young modules) we obtain that $\Lambda_0$ has finite injective dimension. Using the simple preserving duality, we obtain that all injective indecomposable modules have finite projective dimension, and therefore $\Lambda_0$ is Iwanaga-Gorenstein. 
	
	To see that $\Lambda_0$ has infinite global dimension  observe that $\Lambda_0$ and $\Lambda_k(p, 2p)$ are cellular algebras (see for instance \citep[Remark 8.1.4]{Cr2} or \citep[Theorem 7.5]{appendix} together with the fact that we can write $\End_{R_k(p, 2p)}(\Hom_{S_k(p, 2p)}(T, V^{\otimes 2p}) )^{op}\cong \Lambda_k(p, 2p)$). By Equation (2), the cell modules in the cellular structure of these algebras are precisely the Specht modules.
	
	Then, by \cite{zbMATH01384521}, $\Lambda_k(p, 2p)$ and, in particular, its principal block $\Lambda_0$ would be quasi-hereditary if they had finite global dimension. But they cannot be quasi-hereditary since the number of Specht modules (cell modules) differs from the number of simple modules by one.
	
	The module $V^{\otimes 2p}\oplus T(2^p)$ is (up to multiplicities) a characteristic tilting module of $S_k(p, 2p)$ and $T(2^p)$ is in the principal block of $S_k(p, 2p)$. So the direct summand of $V^{\otimes 2p}$ in a non-principal block component of $S_k(p, 2p)$ is a characteristic tilting module in the block component. Hence, 
	the non-principal blocks of $\Lambda_k(p, 2p)=\End_{S_k(p, 2p)}(V^{\otimes 2p})^{op}$ are quasi-hereditary algebras (actually they are the Ringel dual of the respective block component of $S_k(p, 2p)$), see also  \citep[4.6(3)]{E1}. In particular, the non-principal blocks of $\Lambda_k(p, 2p)$ have finite global dimension, and so $\Lambda_k(p, 2p)$ is an Iwanaga-Gorenstein algebra.
\end{proof}

	\begin{Cor}\label{finitisticlamdbazero}
	The algebra $\Lambda_0$ has finitistic dimension equal to $2p-4$.
\end{Cor}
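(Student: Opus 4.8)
The plan is to deduce this from Theorem \ref{thm3:3:2} together with the coresolutions produced in Proposition \ref{5dotonedotone}. Since $\Lambda_0$ is Iwanaga-Gorenstein, the discussion in Subsection \ref{Homological dimensions} (see \citep[VI, Lemma 5.5]{ARS}) gives $\findim \Lambda_0 = \injdim_{\Lambda_0}\Lambda_0$, so it suffices to compute $\injdim_{\Lambda_0}\Lambda_0 = \max\{\injdim_{\Lambda_0}P : P \text{ indecomposable projective}\}$. By Lemma \ref{lemmaprojectives} every indecomposable projective $\Lambda_0$-module is projective-injective except for $P_{\Lambda_0}^{\langle 1\rangle}$, $P_{\Lambda_0}^{\langle 3,2\rangle}$ and $P_{\Lambda_0}^{\langle a+1,1\rangle}$ with $a = 2,\dots,p-1$, and the projective-injective ones contribute $0$ to the maximum. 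So the whole computation reduces to the injective dimension of these finitely many non-injective projectives.

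First I would fix one such non-injective projective $P$. By Proposition \ref{5dotonedotone} it fits into an exact sequence $0\to P\to Q_0\to Q_1\to\cdots\to Q_{p-3}\to Y\to 0$ where each $Q_i$ is projective-injective (by Lemma \ref{lemmaprojectives}, being the restriction of a projective over the self-injective block $B$) and $Y$ is a non-projective Young module with $Y\not\cong Y^{\langle p\rangle}$. Splicing this into short exact sequences $0\to C_i\to Q_i\to C_{i+1}\to 0$ with $C_0 = P$ and $C_{p-2} = Y$, and using that each $Q_i$ is injective, a standard dimension-shift along the long exact $\Ext$-sequences yields $\Ext_{\Lambda_0}^{j}(M,P)\cong \Ext_{\Lambda_0}^{j-(p-2)}(M,Y)$ for every $M\in\Lambda_0\m$ and every $j\ge p-1$ (the intermediate degrees stay $\ge 1$ because $p\ge 5$). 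Now $Y$ is self-dual under the simple-preserving duality of $\Lambda_0$, so $\injdim_{\Lambda_0}Y = \pdim_{\Lambda_0}Y$, and the latter equals $p-2$ by the observation recorded right after Proposition \ref{5dotonedotone}. Hence $\Ext_{\Lambda_0}^{j}(M,P) = 0$ for $j > 2p-4$ while $\Ext_{\Lambda_0}^{2p-4}(M,P)\cong \Ext_{\Lambda_0}^{p-2}(M,Y)\ne 0$ for a suitable $M$; therefore $\injdim_{\Lambda_0}P = 2p-4$.

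Since this value is the same for every non-injective indecomposable projective and strictly larger than $0$, taking the maximum gives $\injdim_{\Lambda_0}\Lambda_0 = 2p-4$ and hence $\findim\Lambda_0 = 2p-4$. There is no serious obstacle here: the content is entirely carried by Theorem \ref{thm3:3:2} and Proposition \ref{5dotonedotone}, and the only points demanding a little attention are checking that the dimension-shift isomorphism is valid in the relevant range of degrees and that the injective dimension of the Young modules $Y$ occurring in the coresolutions is \emph{exactly} $p-2$ rather than merely bounded by it — both of which follow from self-duality of Young modules together with the exact value $\pdim_{\Lambda_0}Y = p-2$.
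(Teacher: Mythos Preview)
Your proposal is correct and follows essentially the same approach as the paper: reduce to $\injdim_{\Lambda_0}\Lambda_0$ via the Iwanaga--Gorenstein property, then use the coresolutions of Proposition~\ref{5dotonedotone} (whose middle terms are projective-injective) together with self-duality of Young modules to get $\injdim_{\Lambda_0}P = (p-2) + \injdim_{\Lambda_0}Y = (p-2) + (p-2) = 2p-4$ for each non-injective indecomposable projective $P$. The paper simply asserts the additive formula $\injdim_{\Lambda_0}P = p-2 + \injdim_{\Lambda_0}Y^\lambda$ in one line, whereas you spell out the dimension-shift explicitly, but the content is the same.
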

\begin{proof}Let $P$ be a projective  not injective $\Lambda_0$-module.
	By Proposition \ref{5dotonedotone}, it follows that $\injdim_{\Lambda_0}P=p-2+\injdim Y^\lambda$ with $\lambda\in \{\la p, p-1 \ra\}\cup \{\la a\ra \colon 1\leq a \leq p-1\}$. As we have seen, in such a case, $\injdim_{\Lambda_0} Y^\lambda=\pdim_{\Lambda_0} Y^\lambda=p-2$. Since $\Lambda_0$ is Iwanaga-Gorenstein the finitistic dimension of $\Lambda_0$ is precisely $\injdim_{\Lambda_0} \Lambda_0$ (see \citep[Proposition 4.2]{zbMATH00067390} and \citep[Theorem 2.28]{zbMATH02078857}). Hence, it is equal to $p-2+p-2=2p-4$. 
\end{proof}

Later, we will return to the  Young module not covered in Proposition \ref{5dotonedotone} to see that it has infinite projective dimension.

\subsection{Coresolving $S^{\la 2, 1 \ra}$ by $DQ$} \label{coresolvingsingularSpechtmodule}

The main motivation to resolve the singular Specht module $S^{\la 2, 1 \ra}$ by $DQ$ is two-fold:
\begin{enumerate}[(i)]
	\item Determine the global dimension of the principal block $A_0$ from the point of view of \citep[Lemma 3.8]{CP2};
	\item Determine the relative dominant dimension $Q\domdim_{A_0} T(2^p)$, where $T(2^p)\oplus Q$ is the (multiplicity-free) characteristic tilting module of $A_0$. 
\end{enumerate}

Our aim is to
construct an exact sequence of the form
\begin{align}
	0 \to S^{\la 2, 1\ra} \to R_0\stackrel{d_1}\to R_1 \to \ldots \to R_{r-1} \stackrel{d_r}\to R_r \to \ldots \to R_{2(p-1)}\to 0, \label{eq3}
\end{align}
with terms in ${\rm add}_{\Lambda_0}DQ$, that is, each $R_i$ is a direct sum of Young modules.
We let  $Z_r = {\rm im} (d_r)$ for $r=1, \ldots, 2p-2$.  The coresolution will be computed inductively and the main idea is to describe
{\it the kernel $Z_r$ as an extension of a distinguished Specht module
	by a module $W_r$, where $W_r$ is an  (explicit) iterated extension of 
	Brauer induced modules.}
Here, by a \emph{distinguished Specht module} we mean a Specht module of the form $S^{\la u, v\ra }$ with $u-v\in \{1, 2\}$.

If $r$ is odd, we write $r=2a-1$. Then the Specht module which will
be the distinguished submodule of $Z_r$ is $S^{\la a+2, a\ra}$.
Otherwise, let $r=2a$. Then the distinguished submodule of
$Z_r$ will be $S^{\la a+2, a+1\ra }$.

\begin{Example} We describe the start of the construction
	in detail, and it will motivate
	the general step. \label{example3dot4dot1}
	
	\begin{enumerate}[(I)]
		\item As the start, we
		take $R_0$ to be the injective hull of $S^{\la 2, 1\ra}$, and then fix
		$Z_1= \Omega^{-1}S^{\la 2, 1 \ra}$.
		
		By Lemma \ref{lemmaprojectives},  the injective hull of $S^{\la 2, 1\ra}$ is
		the Young module $Y^{\la 2, 1\ra}$ which is isomorphic to 
		$P^{\la 4, 3\ra}$.
		We have the exact sequence
		\begin{align}
			0\to U{\la 3, 1 \ra\choose \la 2, 1\ra} \longrightarrow
			Y^{\la 2, 1\ra} \longrightarrow   U{\la 4,3\ra \choose \la 4, 2\ra}
			\to 0. \label{eq7}
		\end{align}
		
		So $R_0= Y^{\la 2, 1\ra}$
		and we define $Z_1$ to be the quotient $R_0/S^{\la 2, 1\ra}$. 
		
		The term on the right of (\ref{eq7}) is Brauer induced. We claim that it is 
		isomorphic to $\Omega^{p-3}Y^{\la 2\ra}$. This follows since the
		term on the left of (\ref{eq7}) is, 
		by Proposition \ref{5dotonedotone}, isomorphic to $\Omega^{p-2}Y^{\la 2\ra}$. 
		
		This gives a short exact sequence
		\begin{align}
			0\to S^{\la 3, 1\ra} \to Z_1 \to  \Omega^{p-3}Y^{\la 2\ra}=:W_1\to 0. \label{eq8}
		\end{align}

		\item We take injective hulls of the end terms of the sequence (\ref{eq8}), and denote their direct sum by $R_1$. Then 
		$Z_1$ embeds into $R_1$ and 
		we define $Z_2$ be the quotient $R_1/Z_1$. 
		The injective hull of $S^{\la 3, 1\ra}$ is the Young module $Y^{\la 3, 1\ra}$ (which is isomorphic to $P^{\la 4, 2\ra}$). 
		By Snake Lemma and (4) of the proof of Proposition \ref{5dotonedotone}, $Z_2$ is then given by the  exact sequence
		\begin{align}
			0\to P^{\la 4, 2 \ra}/S^{\la 3, 1\ra} \to Z_2 \to \Omega^{p-4}Y^{\la 2\ra}\to 0. \label{eq11}
		\end{align}
			The module  $P^{\la 4, 2\ra}$ has a filtration by Brauer
		induced modules coming  from
		the projective resolution of $Y^{\la 1\ra}$:
		\begin{align}
			0\to U{\la 3,2\ra \choose \la 3, 1\ra} \to P^{\la 4, 2\ra} \to U{\la 4,2\ra \choose \la 4, 1\ra} \to 0. \label{eq9}
		\end{align}
	The right term of the sequence (\ref{eq9}) is isomorphic to $\Omega^{p-3}Y^{\la 1\ra}$. Factoring out $S^{\la 3, 1\ra}$ from (\ref{eq9}) gives the exact sequence 
	\begin{align}
	0\to S^{\la 3, 2\ra} \to P^{\la 4,2\ra}/S^{\la 3, 1\ra}  \to \Omega^{p-3}Y^{\la 1\ra} \to 0. \label{eq10}
	\end{align}
By Equations (\ref{eq11}) and (\ref{eq10}), we have $S^{\la 3, 2\ra} \subset Z_2$ and the quotient $W_2$
is an extension
$$0\to \Omega^{p-3}Y^{\la 1 \ra} \to W_2 \to \Omega^{p-4}Y^{\la 2\ra}\to 0.
$$ 
	\end{enumerate}
\end{Example}

In general, the method is the same. Let $\lambda$ be a partition in $\L^+(p, 2p)$.

\begin{enumerate}[(I)]
	\item We take the injective
	hull of the distinguished Specht modules, and we write the quotient as
	an extension of a syzygy of some $Y^ \lambda$ by another distinguished 
	Specht module. 
	\item We combine the syzygy of $Y^ \lambda$ with the iterated
	extension which we get from the previous step.
\end{enumerate}

\subsubsection{Description of the cosygyzy of the distinguished Specht modules}

We will use the following lemma as a tool to realise the procedure (I).

\begin{Lemma}  \label{lemma3dot4dot2} Let $1\leq a \leq p-2$. Then there are Brauer induced exact sequences
	\begin{align}
		0\to U{\la a+2, a\ra \choose \la a+1, a\ra} \longrightarrow Y^{\la a+1, a\ra} \longrightarrow \Omega^{p- (a+2)}(Y^{\la a+1\ra})\to 0 \\
		0\to U{\la a+2, a+1\ra \choose \la a+2, a\ra} \longrightarrow Y^{\la a+2, a\ra} \to \Omega^{p-(a+2)}(Y^{\la a\ra})\to 0.
	\end{align}
\end{Lemma}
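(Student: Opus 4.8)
The two sequences are analogues of the exact sequences used to build the coresolution of $S^{\la 2, 1\ra}$, so I would prove them by the same technique: identify the relevant Young module $Y^{\la a+1, a\ra}$ (resp.\ $Y^{\la a+2, a\ra}$) with an indecomposable projective module of $B$ via Corollary~\ref{3dot2dot1}, read off its Specht quotients and Brauer induced submodule structure from Theorem~\ref{thm:proj}, and then recognise the resulting quotient as a syzygy of $Y^{\la a+1\ra}$ (resp.\ $Y^{\la a\ra}$) from Proposition~\ref{5dotonedotone}. Concretely, since $(a+1)-a=1$, Corollary~\ref{3dot2dot1}(a)(ii) gives $Y^{\la a+1, a\ra}\cong P^{\la a+3, a+2\ra}$, and Theorem~\ref{thm:proj}(d) (with $s=a+2$) exhibits this projective with a Brauer induced submodule $U{\la a+3, a+2\ra \choose \la a+3, a+1\ra}$ —
wait, I must be careful here: what Theorem~\ref{thm:proj}(d) actually displays is a module with top Specht quotient $S^{\la s+1, s\ra}$ and a distinguished Brauer induced submodule; the point is that factoring out this submodule leaves exactly two Specht quotients, and the resulting length-two-Specht-filtered module is Brauer induced. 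So I would extract from Theorem~\ref{thm:proj}(d) a short exact sequence whose kernel is $U{\la a+2, a\ra \choose \la a+1, a\ra}$ and whose cokernel is the appropriate Brauer induced module.

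\textbf{Key steps in order.} First, translate each $Y$-label into a projective label using Corollary~\ref{3dot2dot1}(a): note $Y^{\la a+1, a\ra}\cong P^{\la a+3, a+2\ra}$ (case $a-b=1$ applied to $\la a+1,a\ra$, giving $\la a-1,b-1\ra$ shifted appropriately — I would double-check the index arithmetic against the statement, since the formula reads $P^{\la a,b\ra}\cong Y^{\la a-2,b-2\ra}$, hence $Y^{\la a+1,a\ra}\cong P^{\la a+3,a+2\ra}$) and $Y^{\la a+2, a\ra}\cong P^{\la a+3, a+1\ra}$ via case $a-b\geq 2$ (since $(a+2)-a=2$), giving $P^{\la a,b\ra}\cong Y^{\la a-1,b-1\ra}$, so $Y^{\la a+2,a\ra}\cong P^{\la a+3,a+1\ra}$. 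Second, apply Theorem~\ref{thm:proj} to these projectives: $P^{\la a+3, a+2\ra}$ is of type (d) and $P^{\la a+3, a+1\ra}$ is of type (f) (since $(a+3)-(a+1)=2>1$), and in the latter case the relevant submodule-quotient structure is the one discussed in Subsection~\ref{restrictingtoBrauertreealgebras} where $P^{\la u,v\ra}$ has a Brauer induced submodule $U{\la u,v\ra \choose \la u,v-1\ra}$ with quotient the other Brauer induced module. Third, identify the quotient terms as syzygies: the quotient of $Y^{\la a+1, a\ra}$ by $U{\la a+2, a\ra \choose \la a+1, a\ra}$ has exactly the Specht quotients appearing in the coresolution of $Y^{\la a+1\ra}$ truncated at the appropriate spot — comparing with the coresolutions written in parts (1)--(5) of the proof of Proposition~\ref{5dotonedotone}, this quotient sits at step $p-(a+2)$, hence is $\Omega^{p-(a+2)}(Y^{\la a+1\ra})$. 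The analogous count for the second sequence, using the coresolution of $Y^{\la a\ra}$, places the cokernel at step $p-(a+2)$ of that resolution, giving $\Omega^{p-(a+2)}(Y^{\la a\ra})$. Finally, I would check the edge cases $a=1$ and $a=p-2$ (where some syzygy exponents become $p-3$ or $0$, and the Young module $Y^{\la p\ra}\cong k$ or the projective labels hit the boundary of the quiver) to make sure the formulas still read correctly; the case $a=1$ of the second sequence should reproduce exactly Equation~(\ref{eq7}) from Example~\ref{example3dot4dot1}, which is a useful consistency check.

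\textbf{Main obstacle.} The genuinely delicate point is the syzygy count — verifying that the Brauer induced quotient module obtained from Theorem~\ref{thm:proj}(d)/(f) really is $\Omega^{p-(a+2)}$ of the relevant Young module, with the exponent exactly right rather than off by one. This requires matching the Brauer induced labels $U{\star \choose \star}$ appearing in Theorem~\ref{thm:proj} against the explicit terms of the coresolutions in the proof of Proposition~\ref{5dotonedotone}, and tracking how the syzygy "turns the corner" at the top of the Gabriel quiver (the phenomenon described in the first paragraph of that proof, where a module like $U{\la a+1, a-1\ra \choose \la a, a-1\ra}$ has injective hull $P^{\la a+2, a+1\ra}$ and the resolution changes direction). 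A secondary, more bookkeeping-level obstacle is simply keeping the two index conventions straight — the "$a-b=1$ versus $a-b\geq 2$" dichotomy in Corollary~\ref{3dot2dot1} interacts with the "$r-s=1$ versus $r-s>1$" dichotomy in Theorem~\ref{thm:proj}, and the offsets differ by one between the two cases, so the two displayed sequences, though parallel in form, are not proved by a single uniform argument but by two closely analogous ones.
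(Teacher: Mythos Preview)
Your approach is essentially the same as the paper's: identify the Young modules with projectives via Corollary~\ref{3dot2dot1}, read off the Brauer induced filtration from Theorem~\ref{thm:proj}, and match the quotient against the explicit syzygies in Proposition~\ref{5dotonedotone}. Two small corrections: at the boundary $a=p-2$ the identifications $Y^{\la a+1,a\ra}\cong P^{\la a+3,a+2\ra}$ and $Y^{\la a+2,a\ra}\cong P^{\la a+3,a+1\ra}$ break down (since $a+3=p+1$), and the paper instead uses Corollary~\ref{3dot2dot1}(b) to get $Y^{\la p-1,p-2\ra}\cong P^{\la p\ra}$ and $Y^{\la p,p-2\ra}\cong P^{\la p-1\ra}$, invoking parts (a) and (b) of Theorem~\ref{thm:proj} rather than (d) and (f); and your consistency check should read ``the case $a=1$ of the \emph{first} sequence reproduces Equation~(\ref{eq7})'', not the second.
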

\begin{proof}
Consider the first part.	By Corollary \ref{3dot2dot1}, we have

$$Y^{\la a+1, a\ra} = \left\{\begin{array}{ll} P^{\la a+3, a+2\ra} & \text{ if } a\leq p-3
	\cr P^{\la p\ra} & \text{ if } a=p-2
\end{array}\right..
$$ By Theorem \ref{thm:proj}, this has (just one) filtration by Brauer induced modules:
$$0 \to U{\la a+2, a\ra\choose \la a+1, a\ra} \to Y^{\la a+1, a\ra} \to 
U_a \to 0,
$$ where $$U_a=\begin{cases}
	U{\la a+3, a+2\ra \choose \la a+3, a+1\ra}, &\text{ if } a\leq p-3 \\[1em]
	U{\la p\ra \choose \la p-1\ra}, &\text{ if } a=p-2
\end{cases}.$$
If $a=p-2$, then $U_a$ is $Y^{\la p-1\ra}$ (see Subsection \ref{restrictingtoBrauertreealgebras}(1)).

Suppose $a\leq p-3$. Then $U_a$ is $\Omega^{p-(a+2)}Y^{\la a+1\ra}$.
This follows from Proposition \ref{5dotonedotone} by observing that   the first syzygies of 
$Y^{\la a+1\ra}$ are
$U{\la u, a+1\ra\choose \la u, a\ra}$ for $u=p, p-1, \ldots, a+2$. 

Now consider the second part. By Corollary \ref{3dot2dot1},
we have $Y^{\la a+2, a\ra} =\begin{cases}
	P^{\la a+3, a+1\ra}, &\text{ if } a\leq p-3 \\
	P^{\la p-1 \ra}, &\text{ if } a=p-2
\end{cases} $. 
If $a=p-2$, Theorem  \ref{thm:proj}(b) yields the filtration
\begin{align}
	0\rightarrow U{\la p, p-1\ra \choose \la p, p-2\ra} \to Y^{\la p, p-2\ra}
	\to U{\la p-1\ra \choose \la p-2\ra}\to 0. \label{eq13}
\end{align}
Otherwise, using Theorem \ref{thm:proj}(f) we take the filtration by Brauer induced modules 
\begin{align}
	0 \to U{\la a+2, a+1\ra \choose \la a+2, a\ra} \to Y^{\la a+2, a\ra}
	\to U{\la a+3, a+1\ra \choose \la a+3, a\ra}\to 0, \label{eq12}
\end{align}
In both instances, in (\ref{eq11}) and (\ref{eq12}), the module on  the right is $\Omega^{p-(a+2)}Y^{\la a\ra}$, again by Proposition \ref{5dotonedotone} (see Subsection \ref{restrictingtoBrauertreealgebras}(2)).
\end{proof}

The following shows that, in characteristics greater than 3, filtrations by Young modules simply correspond to taking direct sums.

	\begin{Lemma}Assume that $p>3$. \label{lemmafiltrationsYoung}
	Let $\lambda$ and $\mu$ be partitions of $2p$ in at most $p$ parts. Then, 
	$\Ext_{\Lambda_k(p, 2p)}^1(Y^\lambda, Y^\mu)=0$. In particular, $\Ext_{\Lambda_0}^1(Y^\lambda, Y^\mu)=0$ for any two partitions in $\Lambda_0$.
\end{Lemma}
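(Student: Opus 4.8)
The plan is to deduce this vanishing from the corresponding vanishing over the Ringel dual $R:=R_k(p,2p)$ by transporting it along the Schur functor $F_{p,2p}$. The first step is to rewrite everything in terms of $R$: by the identification \eqref{eqone} we have $Y^\lambda\cong F_{p,2p}T_R(\lambda)$ and $Y^\mu\cong F_{p,2p}T_R(\mu)$ for all $\lambda,\mu\in\Lambda^+(p,2p)$ --- which is exactly the set of partitions of $2p$ in at most $p$ parts --- and both $T_R(\lambda)$ and $T_R(\mu)$, being indecomposable summands of the characteristic tilting module of $R$, lie in $\mathcal{F}(\St_R)$. This places us inside the range of validity of Theorem \ref{thm2dot2dot1}(b).

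The key input is then the self-orthogonality of the characteristic tilting module. Since $\add T_R=\mathcal{F}(\St_R)\cap\mathcal{F}(\Cs_R)$ and $\mathcal{F}(\Cs_R)$ is coresolving with $\mathcal{F}(\Cs_R)=\{N: \Ext_R^1(M,N)=0\ \forall M\in\mathcal{F}(\St_R)\}$ (see Subsection \ref{Quasihereditaryalgebras}), a dimension-shift argument gives $\Ext_R^i(M,N)=0$ for all $i>0$, $M\in\mathcal{F}(\St_R)$, $N\in\mathcal{F}(\Cs_R)$; in particular $\Ext^1_R(T_R(\lambda),T_R(\mu))=0$. Now I would apply Theorem \ref{thm2dot2dot1}(b) in homological degree $i=1$: since $p>3$ forces $p-3\geq1$, that theorem yields $\Ext^1_{\Lambda_k(p,2p)}(F_{p,2p}M,F_{p,2p}N)\cong\Ext^1_R(M,N)$ for $M,N\in\mathcal{F}(\St_R)$, and specialising to $M=T_R(\lambda)$, $N=T_R(\mu)$ gives $\Ext^1_{\Lambda_k(p,2p)}(Y^\lambda,Y^\mu)=0$. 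The ``in particular'' clause is then immediate, because $\Lambda_0$ is (Morita equivalent to) the principal block component of $\Lambda_k(p,2p)$, and $\Ext^1$ is computed blockwise and is invariant under Morita equivalence.

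I do not expect a genuine obstacle: the statement is essentially a formal consequence of the tilting/Schur-functor machinery already set up, and the only point that needs checking is the numerical bound $p-3\geq1$, which is precisely the standing hypothesis $p>3$. A self-contained alternative would be to argue directly from the explicit classification of Young $\Lambda_0$-modules in Corollary \ref{3dot2dot1} and Proposition \ref{5dotonedotone}: when $Y^\lambda$ is projective, or $Y^\mu$ is projective-injective, $\Ext^1$ vanishes for trivial reasons, and the remaining cases --- those involving $Y^{\la a\ra}$, $Y^{\la p,p-1\ra}$, and the trivial module $Y^{\la p\ra}$ --- could be read off from their syzygies; but this forces a separate treatment of $Y^{\la p\ra}$, which has infinite projective dimension, so the functorial route is cleaner and is the one I would write up.
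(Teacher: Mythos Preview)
Your proposal is correct and follows essentially the same approach as the paper: transport the $\Ext^1$ computation through a Schur-type functor from Theorem~\ref{thm2dot2dot1} and use that the source modules are $\Ext$-orthogonal. The only cosmetic difference is that the paper invokes part~(c) of Theorem~\ref{thm2dot2dot1} with the functor $L=\Hom_{S_k(p,2p)}(V^{\otimes 2p},-)$ and the injectives $I(\lambda)\in\mathcal{F}(\Cs)$, obtaining $\Ext^1_{\Lambda}(Y^\lambda,Y^\mu)\cong\Ext^1_{S_k(p,2p)}(I(\lambda),I(\mu))=0$, whereas you invoke part~(b) with $F_{p,2p}$ and the partial tilting modules $T_R(\lambda)\in\mathcal{F}(\St_R)$; the two routes are Ringel dual to one another and equally valid.
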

\begin{proof}
	By Equation (\ref{eqone}), we can write $\Ext_{\Lambda_k(p, 2p)}^1(Y^\lambda, Y^\mu)\cong \Ext_{S_k(p, 2p)}^1(LI(\lambda), LI(\mu))$. By Theorem \ref{thm2dot2dot1}(c), we obtain $\Ext_{\Lambda_k(p, 2p)}^1(LI(\lambda), LI(\mu))\cong \Ext^1_{S_k(p, 2p)}(I(\lambda), I(\mu))=0$.
\end{proof}

\subsubsection{Description of $W_r$ and $Z_r$}

The task is to construct $Z_r$ with an  embedding of $Z_r$ into a module in ${\rm add} DQ$. 
We will embed it into a module $I\oplus Y$ where $I$ is injective, and $Y$ is a direct sum of non-projective Young modules.

We will get that  each  module $W= W_r$ occurs as   an exact sequence
\begin{equation}
	0\to W'\to W\to W''\to 0 \label{eqextension}
\end{equation}
such that  $W'$  is  filtered by modules of the form 
$\Omega^{p-u}Y^{\la v\ra}$ for some $\lceil r/2 \rceil+2\leq u < p$ and for some $v$ with $1\leq v\leq p-1$, and $W''$ is zero or is filtered by  
non-projective  Young modules.
 Then $W''$ is a direct sum of these modules, by Lemma \ref{lemmafiltrationsYoung}. 
Let $I(W')$ be the injective hull of $W'$. 
Then we can embed $W$ into the direct sum of $I(W')\oplus  W''$,  and this belongs to ${\rm add} DQ$.

Each module $Z_r$ can be described as a Specht module glued onto an iterated extension of Brauer-induced modules, as follows:

\begin{Prop} Let $r$ be a number in $\{1, \ldots, 2p-4\}$. Set $W_0=0$. \label{prop3dot4dot3}
	\begin{enumerate}[(a)]
		\item Suppose $r=2a-1\geq 1$ and $a+2\leq p$.  Then we have an exact sequence
		$$0\to S^{\la a+2, a\ra} \to Z_{r} \to  W_{r}\to 0
		$$
		where $W_r$ is an extension
		$$0 \to \Omega^{p-(a+2)}Y^{\la a+1\ra} \to W_r \to \Omega^{-1}W_{r-1}'\to 0,$$ that can be rearranged into an extension of the form (\ref{eqextension}), and $Z_r$ can be  embedded into a module in ${\rm add} DQ$.
		\item Suppose $r=2a\geq 1$ and $a+2 \leq p$. Then we have an exact sequence
		$$0\to S^{\la a+2, a+1\ra} \to Z_r \to W_r \to 0$$
		where $W_r$ is an extension
		$$0\to \Omega^{p-(a+2)}Y^{\la a \ra} \to W_r \to \Omega^{-1}W_{r-1}' \to 0,$$ that can be rearranged into an extension of the form (\ref{eqextension}), and $Z_r$ can be  embedded into a module in ${\rm add} DQ$.
	\end{enumerate}
\end{Prop}
\begin{proof}
	We prove the claim by induction on $r$. In the
	Example \ref{example3dot4dot1}, we have dealt with $r=1$ and $r=2$.
	\begin{enumerate}[(a)]
		\item Assume $r=2a-1\geq 1$ and $a< p-2$. We will construct the relevant exact sequence for $2a$. By induction, we have an exact sequence
		$$0\to S^{\la a+2, a\ra} \to Z_r \to W_r\to 0$$
		and $W_r$ is an iterated extension of modules of the form $\Omega^{p-u}Y^{\la v\ra}$ for some $a+2\leq u\leq p$ and $1\leq v\leq p-1$. 
		
		We embed the term on the left into $Y^{\la a+2, a\ra}$, and the quotient is described in Lemma \ref{lemma3dot4dot2}. 
		By induction, $W_r$ comes as an exact sequence
		$0\to W_r'\to W_r \to W_r''\to 0$, with the properties described in (\ref{eqextension}).
		
		We embed each of the quotients $\Omega^{p-u}Y^{\la v\ra}$ of $W_r'$  (so that  $p-u\geq 1$) into its injective hull. For $W_r''$ we take the identity map. Then $W_r$ can be embedded into the direct sum of these modules, which we denote
		by $R$ (and it belongs to ${\rm add} DQ$). The correspoding quotient is $\Omega^{-1}W_r'$ and thus 
		 an iterated extension of modules of the form $\Omega^{p-u-1}Y^{\la v\ra}$ for some $a+2\leq u\leq p$ and $1\leq v\leq p-1$.
		Then $Z_r$ can be embedded into the direct sum  of $Y^{\la a+2, a\ra}$ with $R$,  and we take the quotient to be $Z_{r+1} = Z_{2a}$. That is, by the Snake Lemma, we constructed an exact sequence
		$$0 \to Y^{\la a+2, a\ra}/S^{\la a+2, a\ra} \to Z_{2a} \to {\Omega}^{-1}W_{2a-1}'\to 0.$$
		By Lemma \ref{lemma3dot4dot2}, $X:= Y^{\la a+2, a\ra} /S^{\la a+2, a\ra}$ is an extension of $\Omega^{p-(a+2)}Y^{\la a\ra}$ by $S^{\la a+2, a+1\ra}$. 
		We define $W_{2a}$ to be the quotient $Z_{2a}/S^{\la a+2, a+1\ra}$, this gives an exact sequence
		$$0\to S^{\la a+2, a +1\ra}\to Z_{2a}\to W_{2a}\to 0$$
		together with the following identification:
		\begin{align*}
			{\Omega}^{-1}W_{2a-1}'\cong Z_{2a}/X\cong (Z_{2a}/S^{\la a+2, a+1\ra})/(X/S^{\la a+2, a+1\ra}) \cong (Z_{2a}/S^{\la a+2, a+1\ra})/\Omega^{p-(a+2)}Y^{\la a\ra}.
		\end{align*}
		So $W_{2a}$ fits into an extension of the correct form. In particular, combining the iterated extension of modules of ${\Omega}^{-1}W_{2a-1}'$ with $\Omega^{p-(a+2)}Y^{\la a\ra}$, we get that  $W_{2a}$ fits into an exact sequence of the form (\ref{eqextension}).

		 \item  Now assume that $r=2a$ (and  $a< p-2$). In this case, we have by the inductive hypothesis  an exact sequence
		 $$0\to S^{\la a+2, a+1\ra} \to Z_r \to W_r\to 0$$
		 and $W_r$ is an iterated extension of modules of the form $\Omega^{p-u}Y^{\la v\ra}$ for some $a \leq u\leq p$ and $1\leq v\leq p-1$. 
		 
		 We embed the term on the left into $Y^{\la a+2, a+1\ra}$, and the quotient is described in Lemma \ref{lemma3dot4dot2}.
		 By induction, $W_r$ comes as an exact sequence
		 $0\to W_r'\to W_r \to W_r''\to 0$, with the properties described in (\ref{eqextension}).
		 
		 We embed each of the quotients $\Omega^{p-u}Y^{\la v\ra}$ of $W_r'$  (so that  $p-u\geq 1$) into its injective hull. For $W_r''$ we take the identity map. Then $W_r$ can be embedded into the direct sum of these modules, which we denote
		 by $R$ (and it belongs to ${\rm add} DQ$). The correspoding quotient is isomorphic to $\Omega^{-1}W_r'$.

		 
		 Then $Z_r$ can be embedded into the direct sum  of $Y^{\la a+2, a+1 \ra}$ with $R$,  and we take the quotient to be $Z_{r+1}$. 
		 As the final step, we rearrange as in (a). So we get an exact sequence
		 $$0\to S^{\la a+3, a +1\ra}\to Z_{r+1 }\to W_{r+1}\to 0$$
		 where $W_{r+1}$ is an extension of ${\Omega}^{-1}W_r'$ by the Brauer induced module $\Omega^{p-(a+3)}Y^{\la a+2\ra}$.  This sequence is of the form as stated. 
	\end{enumerate}
\end{proof}

\subsubsection{The last step in the coresolution} 

Now, if we use Proposition \ref{prop3dot4dot3} to build the exact sequence (\ref{eq3}), all the middle terms $R_i$ in the coresolution that are injective Young modules can be determined. Moreover, at this stage, the construction exhausts all possible injective Young modules. Nevertheless, Proposition \ref{prop3dot4dot3} does not suffice to complete the sequence, as the final term $Z_r$ produced by the proposition is not a Young module.

The  last term in the above construction occurs when $a+2=p$ and $r=2a=2p-4$. In that case, (b) gives the exact sequence
\begin{align}
	0\to S^{\la p, p-1\ra} \to Z_{2p-4}\to W_{2p-4}\to 0 \label{eq14}
\end{align}
such that $W_{2p-4}$ fits into an exact sequence of the following form
$$0\to Y^{\la p-2\ra}\to W_{2p-4}\to \Omega^{-1}W_{2p-5}'\to 0.$$
By Proposition \ref{prop3dot4dot3}(a) for $r=2p-5=2(p-2)-1$, we have that $W_{2p-5}$ is an iterated extension with quotients being Young modules. So, we deduce that $W_{2p-5}'=0$. So (\ref{eq14}) can be simplified into
\begin{align}
	0\to S^{\la p, p-1\ra} \to Z_{2p-4} \to Y^{\la p-2\ra}\to 0. \label{eq15}
\end{align}

	We embed the term on the left of (\ref{eq15}) into $Y^{\la p, p-1\ra}$ and
	we take the identity for the term on the right of (\ref{eq15}), hence we take $R_{2p-4}=Y^{\la p, p-1\ra}\oplus Y^{\la p-2 \ra}$.
	The Young module $Y^{\la p, p-1\ra}$ is not projective but it is still true 
	that 
	$Z_{2p-4}$ can be  embedded into $R_{2p-4}$ since $\Ext_{\Lambda_0}^1(Y^{\la p-2 \ra}, Y^{\la p, p-1\ra})=0$.

We let $Z_{2p-3}$ be the quotient $R_{2p-4}/Z_{2p-4}$, which is 
$Y^{\la p, p-1\ra}/S^{\la p, p-1\ra} \cong
S^{\la p-1\ra}$.
We embed this into the Young module $R_{2p-3}:=Y^{\la p-1\ra}$ and we obtain an exact sequence
$$0\to  S^{\la p-1\ra} \to Y^{\la p-1\ra} \to Y^{\la p\ra} \to 0.$$
Thus, the next term $Z_{2p-2}$ is  $Y^{\la p\ra}$ and so the coresolution ends with $R_{2p-2}=Z_{2p-2}=Y^{\la p\ra}$.

	\section{Cohomological properties of Young and Specht modules} \label{Cohomological properties of Young and Specht modules}
	
	In this section, we will make use of the resolutions built in Proposition \ref{5dotonedotone} to give two properties about $Q$: one as a module over the Schur algebra, another as a module over $\Lambda_k(p, 2p)$. Namely, we will determine the relative dominant dimension of the Schur algebra $S_k(p, 2p)$ with respect to the tensor power $V^{\otimes 2p}$ and prove that tensor power as $\Lambda_k(p, 2p)$-module contains a full tilting $\Lambda_k(p, 2p)$-module as direct summand. To keep the notation short, we will abbreviate $\Lambda_k(p, 2p)$ to $\Lambda$ whenever possible. To avoid confusion, we will write $Q_{\Lambda_0}=Q_\Lambda$ when we view $Q$ as right $\Lambda$-module while we will write $Q$ when we view it as left $S_k(p, 2p)$-module (or as left $A_0$-module).
	
		\subsection{The relative dominant dimension of $S_k(p, 2p)$ with respect to the tensor power}

The strategy is to determine it through the relative dominant dimension of the characteristic tilting module using the relative Mueller's theorem established in \citep[Theorem 3.1.4]{Cr2}.

\begin{Lemma}\label{4dot1dot1}
	$D\Hom_{A_0}(Q, T(2^p))\cong DS^{\langle 2, 1\rangle }$ as right $\Lambda_0$-modules.
\end{Lemma}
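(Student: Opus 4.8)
The plan is to identify the tilting summand $T(2^p)$ with a costandard module of $A_0$ and then to apply $\Hom_{A_0}(Q,-)$, which carries costandard modules of $A_0$ to Specht modules of $\Lambda_0$ by Equation~(\ref{eqtwo}). The key point is that $2^p=\la 2,1\ra$ is the minimum of the dominance order on partitions of $2p$ into at most $p$ parts. Indeed, if $\mu=(\mu_1\ge\cdots\ge\mu_p\ge 0)$ is a partition of $2p$ with $\sum_{i=1}^{k}\mu_i\le 2k-1$ for some $k\le p-1$, then $\mu_k\le 1$, hence $\mu_k=\mu_{k+1}=\cdots=\mu_p\le 1$ and $|\mu|\le (2k-1)+(p-k)=p+k-1<2p$, contradicting $|\mu|=2p$; so every such $\mu$ satisfies $\sum_{i\le k}\mu_i\ge 2k$ for all $k$, that is, $\mu$ dominates $(2^p)$. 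As $\la 2,1\ra$ has empty $p$-core it lies in the principal block, so it is the least element of the poset of $A_0$, and therefore $\St_{A_0}(\la 2,1\ra)=\Cs_{A_0}(\la 2,1\ra)=L_{A_0}(\la 2,1\ra)$. In particular this simple module lies in $\cF(\St)\cap\cF(\Cs)$ and is thus a partial tilting module; being indecomposable with highest weight $\la 2,1\ra$, it equals the indecomposable tilting module $T(2^p)$. Hence $T(2^p)\cong\Cs_{A_0}(\la 2,1\ra)$.

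Granting this, the argument would finish quickly: $\Hom_{A_0}(Q,T(2^p))\cong\Hom_{A_0}(Q,\Cs_{A_0}(\la 2,1\ra))$, and by Equation~(\ref{eqtwo}), read off in the basic principal block, the right-hand side is the Specht module $S^{\la 2,1\ra}$ as a left $\Lambda_0$-module. Applying the exact contravariant duality $D$ then gives $D\Hom_{A_0}(Q,T(2^p))\cong DS^{\la 2,1\ra}$ as right $\Lambda_0$-modules, which would be the assertion.

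The one genuinely fiddly point, I expect, is the bookkeeping in the second step rather than any homological input: one has to check that $\Hom_{A_0}(Q,\Cs_{A_0}(\la 2,1\ra))$ really is the $\Lambda_0$-module $S^{\la 2,1\ra}$ used elsewhere in the paper — which is where Equation~(\ref{eqtwo}), together with the identifications of Section~\ref{SpechtandYoungmodulesoverLambda} and the Morita equivalence between $\Lambda_0$ and the principal block of $\Lambda_k(p,2p)$, is invoked — and one has to keep the left/right module structures straight, which the duality $D$ interchanges (this is presumably why the statement is phrased for the duals). As a consistency check, $\Cs_{A_0}(\la 2,1\ra)=L_{A_0}(\la 2,1\ra)$ is simple and $\la 2,1\ra$ is the unique $p$-singular partition in $\Lambda_0$, so the output $S^{\la 2,1\ra}$ is forced to be the singular Specht module, uniserial of length two with top $D^{\la 3,1\ra}$ and socle $D^{\la 4,3\ra}$ — in agreement with the description recorded right after Corollary~\ref{3dot2dot1}.
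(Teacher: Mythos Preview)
Your proof is correct and follows the same route as the paper's: identify $2^p=\la 2,1\ra$, use minimality to get $T(2^p)\cong\Cs(2^p)$, and then invoke Equation~(\ref{eqtwo}) (which is \cite[Proposition~5.2]{E1}). The paper simply states $T(2^p)\cong\Cs(2^p)$ without justification, whereas you supply the explicit dominance-order argument and the standard observation that at a minimal weight the tilting, standard, costandard and simple modules all coincide; your extra care with left/right structures and the passage to the basic block is also sound.
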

\begin{proof}
	In abacus labelling, $2^p$ corresponds to $\langle 2, 1\rangle$. Since $T(2^p)\cong \Cs(2^p)$, the result follows from \citep[Proposition 5.2]{E1}.
\end{proof}

To keep working with left $\Lambda_k(p, 2p)$-modules, we want to show that ${\rm Ext}^i_{\Lambda_0}(D(Q_\Lambda), X)=0$ for $1\leq i\leq t-2$ where $t= 2(p-1)$, for $X$ the dual Specht module ${}^\natural S^{\la 2, 1 \ra}$. Since $\Lambda_0$ is a block of $\Lambda$ we have $\Ext_{\Lambda}^i(M, N)\cong \Ext^i_{\Lambda_0}(M, N)$ for all $\Lambda_0$-modules $M$ and $N$. We know that
$X$ has length two, socle isomorphic to $D^{\la 3, 1\ra}$ and top isomorphic to $D^{\la 4, 3\ra}$. 
We only need to determine ${\rm Ext}^i_{\Lambda} (Y^{\lambda}, X)$ when $Y^{\lambda}$ is not projective.

\begin{Lemma} \label{lemma4dot2} Assume that $p>3$. Then,
	${\rm Ext}^i_{\Lambda_k(p, 2p)}(Y^{\lambda}, X)=0$ for all $i\geq 1$ and $\lambda\neq \la p\ra$.
\end{Lemma}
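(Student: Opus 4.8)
The statement to prove is that $\Ext^i_{\Lambda_k(p,2p)}(Y^\lambda, X) = 0$ for all $i \geq 1$ and all $\lambda \neq \la p\ra$ (with $p > 3$), where $X = {}^\natural S^{\la 2,1\ra}$ is the dual Specht module of length two. The first thing I would do is reduce to the principal block $\Lambda_0$: since $\Lambda_0$ is a block of $\Lambda_k(p,2p)$ and both $Y^\lambda$ (for $\lambda$ in $\Lambda_0$) and $X$ belong to $\Lambda_0$, we have $\Ext^i_{\Lambda}(Y^\lambda, X) \cong \Ext^i_{\Lambda_0}(Y^\lambda, X)$; and for $\lambda$ outside $\Lambda_0$, the Ext groups vanish trivially by the block decomposition. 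It also suffices to treat the non-projective $Y^\lambda$, since projective Young modules contribute no higher Ext. By Proposition \ref{5dotonedotone}, the relevant non-projective Young modules (other than $Y^{\la p\ra}$, which is excluded) are exactly $Y^{\la a\ra}$ for $1 \leq a \leq p-1$ and $Y^{\la p, p-1\ra}$, and each has a finite projective resolution of length $p-2$ whose syzygies are explicitly the Brauer-induced modules appearing in the coresolutions (1)--(5) of that proof.

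The core of the argument is a dimension-shifting computation. Using the exact sequences from the proof of Proposition \ref{5dotonedotone}, the intermediate terms are all projective-injective $\Lambda_0$-modules (as noted in the proof of Theorem \ref{thm3:3:2}, via Lemma \ref{lemmaprojectives}), so applying $\Hom_{\Lambda_0}(-, X)$ and shifting reduces $\Ext^i(Y^\lambda, X)$ for $i \geq 1$ to $\Hom$ and $\Ext^1$ groups between the syzygies $\Omega^j_{\Lambda_0}(Y^\lambda)$ — which are Brauer-induced modules or the non-injective projectives $P_{\Lambda_0}^{\la a+1,1\ra}$, $P_{\Lambda_0}^{\la 3,2\ra}$, $P_{\Lambda_0}^{\la 1\ra}$ — and $X$. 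Concretely, I would argue that $\Hom_{\Lambda_0}(M, X) = 0$ whenever $M$ is one of these Brauer-induced modules or relevant projectives, by comparing composition factors: $X$ has only the two composition factors $D^{\la 3,1\ra}$ (socle) and $D^{\la 4,3\ra}$ (top), so a nonzero map into $X$ forces $D^{\la 3,1\ra}$ to lie in the socle of $M$; one then checks from the explicit socles recorded in Subsection \ref{restrictingtoBrauertreealgebras} and Proposition \ref{5dotonedotone}(a)--(b) (e.g. $\soc Y^{\la a\ra} = D^{\la a+1\ra}$, $\soc Y^{\la p,p-1\ra} = D^{\la p-1\ra}$, and the simple socles of the Brauer-induced syzygies) that $D^{\la 3,1\ra}$ never appears there for the relevant modules. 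The $\Ext^1$ terms that arise in the shifting can be handled either by the same socle/top obstruction after one more step down the resolution, or — more cleanly — by invoking the Ext-comparison of Theorem \ref{thm2dot2dot1}(c) together with Equation (\ref{eqone}) and Lemma \ref{4dot1dot1}: since $X = {}^\natural S^{\la 2,1\ra}$ corresponds under the duality to the dual Weyl module $\Cs(2^p)$ and $Y^\lambda = L I(\lambda)$, the group $\Ext^i_{\Lambda}(Y^\lambda, X)$ matches $\Ext^i_{S_k(p,2p)}(I(\lambda), \Cs(2^p))$ in the range $0 \leq i \leq p-3$, and injective modules have no higher self-Ext against anything — this disposes of the low-degree terms for free, leaving only a bounded tail to check by hand.

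A cleaner alternative worth pursuing: realise $X$ as a term (or related to a term) in the coresolution built in Subsection \ref{coresolvingsingularSpechtmodule}. The singular Specht module $S^{\la 2,1\ra}$ has a finite coresolution by $\add DQ$ of length $2(p-1)$, all of whose middle terms $R_i$ are direct sums of Young modules; dualising via the simple-preserving duality ${}^\natural(-)$ gives a finite resolution of $X = {}^\natural S^{\la 2,1\ra}$ by Young modules. Then $\Ext^i_{\Lambda_0}(Y^\lambda, X)$ can be computed from this resolution, and the vanishing reduces to $\Ext^{>0}_{\Lambda_0}(Y^\lambda, Y^\mu) = 0$ for all Young modules $Y^\mu$ appearing — which is precisely Lemma \ref{lemma3dot4dot2}... no, rather the Young-orthogonality lemma stated just before Example \ref{example3dot4dot1}'s analogue, namely $\Ext^1_{\Lambda_k(p,2p)}(Y^\lambda, Y^\mu) = 0$, extended to all degrees by the same Theorem \ref{thm2dot2dot1}(c) argument ($\Ext^{>0}_{S_k(p,2p)}(I(\lambda), I(\mu)) = 0$).

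\textbf{Main obstacle.} The delicate point is the degree range: Theorem \ref{thm2dot2dot1}(c) only guarantees the Ext-isomorphism for $0 \leq i \leq p-3$, so the clean "reduce to injectives over the Schur algebra" argument does not immediately cover all $i \geq 1$. I expect the real work to be pushing past this bound — either by exploiting that the resolution of $Y^\lambda$ (or of $X$) has length exactly $p-2$, so that for $i$ large the Ext group vanishes for dimension reasons and only a window of small $i$ near $p-3, p-2$ needs the explicit socle/composition-factor bookkeeping; or by checking directly that the syzygy modules $\Omega^j_{\Lambda_0}(Y^\lambda)$ (Brauer-induced, with known simple socles) admit no nonzero homomorphism to $X$ and no extension by $X$, case by case over the short list of possibilities in Proposition \ref{5dotonedotone}. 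Getting the two endpoints $i = p-3$ and the boundary behaviour at the top of the resolution to line up is where care is needed; everything else is the routine socle comparison $D^{\la 3,1\ra} \notin \soc(\text{syzygy})$.
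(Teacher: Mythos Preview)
Your main approach is the paper's: use Theorem~\ref{thm2dot2dot1}(c) for $1\leq i\leq p-3$, use $\pdim_{\Lambda_0}Y^\lambda=p-2$ for $i>p-2$, and handle the single remaining degree $i=p-2$ by a direct Hom computation on $\Omega^{p-2}Y^\lambda$ (which is one of the non-injective projectives listed in Proposition~\ref{5dotonedotone}; only $\lambda=\la 2\ra$ needs a genuine check).

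Two corrections. First, you write that $\Ext^i_\Lambda(Y^\lambda,X)$ matches $\Ext^i_{S_k(p,2p)}(I(\lambda),\Cs(2^p))$ and then appeal to injectivity; but with $I(\lambda)$ in the \emph{first} slot that Ext need not vanish. The paper inserts the missing duality swap: since $Y^\lambda$ is self-dual, $\Ext^i_\Lambda(Y^\lambda,{}^\natural S^{\la 2,1\ra})\cong\Ext^i_\Lambda(S^{\la 2,1\ra},Y^\lambda)$, and only then does Theorem~\ref{thm2dot2dot1}(c) give $\Ext^i_{S_k(p,2p)}(\Cs(2^p),I(\lambda))=0$. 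Second, a nonzero map $M\to X$ forces $D^{\la 3,1\ra}$ or $D^{\la 4,3\ra}$ into the \emph{top} of $M$, not $D^{\la 3,1\ra}$ into the socle; this is what actually rules out all the non-injective projectives except $P_{\Lambda_0}^{\la 3,1\ra}=\Omega^{p-2}Y^{\la 2\ra}$.

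Your alternative --- dualise the coresolution (\ref{eq3}) to a Young-resolution of $X$ and invoke $\Ext^{>0}_{\Lambda_0}(Y^\lambda,Y^\mu)=0$ --- is circular in this paper's logical order: that orthogonality in all degrees is Lemma~\ref{lema4dot2dot1}, proved via Theorem~\ref{maintheoremone}, which depends on the present lemma. Theorem~\ref{thm2dot2dot1}(c) alone only supplies degrees $\leq p-3$, so the alternative cannot close without what you are trying to prove.
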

\begin{proof} If $Y^{\lambda}$ is projective, then the result is clear. Assume that $Y^{\lambda}$ is not projective and $\lambda \neq \la p\ra$. 
	Since $Y^{\lambda}$ is self-dual we obtain by Theorem \ref{thm2dot2dot1} 
	$$\Ext_{\Lambda}^i(Y^\lambda, {}^\natural S^{\la 2, 1 \ra})\cong \Ext_{\Lambda}^i(S^{\la 2, 1\ra}, Y^\lambda)\cong \Ext_{S_k(p, 2p)}^i(\Cs(2^p), I(\lambda))=0$$ for all $1\leq i\leq p-3$.

	By Remark \ref{pdimYoungmodules}, $Y^{\lambda}$ has projective dimension equal to $p-2$ and the middle terms of the resolution constructed are projective and injective.
	
	 So, $\Ext_{\Lambda}^i(Y^\lambda, X)=0$ for $i>p-2$ and $${\rm Ext}^{p-2}_{\Lambda}(Y^{ \lambda}, X) \cong {\rm Ext}^1_{\Lambda}(\Omega^{p-3}(Y^{ \lambda}), X)\cong \overline{\Hom}_{\Lambda}(\Omega^{p-2}(Y^\lambda), X).$$
		So we only need to deal with a resolution where non-zero homomorphisms occur. Since $X$ has length two and the socle is labelled by $\la 3, 1\ra$, non-zero homomorphisms only occur when $\lambda=\la 2\ra$ by Proposition \ref{5dotonedotone}. So, consider the exact sequence
		\begin{align}
			0\to \Omega^{p-2}(Y^{\la 2\ra}) \cong P_{\Lambda_0}^{\la 3, 1\ra} \  \to P^{\la 4, 3\ra} \to \Omega^{p-3}(Y^{\la 2\ra})\to 0 \label{eq15double}
		\end{align}
where 	$\Omega^{p-3}(Y^{\la 2\ra})  =U \cong  U{\la 4, 3\ra \choose \la 4, 2\ra}$.

	Applying ${\rm Hom}_{\Lambda}(-, X)$ to the exact sequence (\ref{eq15double}) we get the exact sequence
	$$0\to {\rm Hom}_{\Lambda}(U, X) \to  {\rm Hom}_{\Lambda} (P^{\la 4, 3\ra}, X) \to  {\rm Hom}_{\Lambda} (P_{\Lambda_0}^{\la 3, 1\ra}, X) \to 
	{\rm Ext}^1_{\Lambda}(U, X) \to 0.
	$$
	
	The second and the third terms of the sequence are each 1-dimensional and so 
	${\rm Ext}^1_{\Lambda}(U, X)$ and ${\rm Hom}_{\Lambda}(U, X)$ have the same vector space dimension.
	So, it is enough to show that ${\rm Hom}_{\Lambda}(U, X)$ is zero.  We have
	$$0\to S^{\la 4,2\ra} \to U  \to S^{\la 4, 3\ra} \to 0$$
	and ${\rm Hom}_{\Lambda}(S^{\la 4, 2\ra},  X) = 0$ since $D^{\la 4, 2\ra}$ is not a composition factor of $X$. Moreover $S^{\la 4, 3\ra}$ is uniserial with composition factors $D^{\la 4, 3\ra}, \ D^{\la 5, 3\ra}$ and $D^{\la 6, 5\ra}$ and therefore also
	${\rm Hom}_{\Lambda}(S^{\la 4,  3\ra}, X)= 0$.
\end{proof}

\begin{Lemma}
	Let $r$ be a number in $\{1, \ldots, 2p-4\}$. Then, the following assertions hold.
	\begin{enumerate}[(a)]
		\item $\Ext_\Lambda^i(W_r, X)=0$ for every $i\geq 0$, where $W_r$ is the module defined in Proposition \ref{prop3dot4dot3}.
		\item $\Hom_\Lambda(Z_r, X)=\begin{cases}
			k, \text{ if } r=1\\
			0, \text{ if } r\in\{2, \ldots, 2p-4\}
		\end{cases}$, where $Z_r$ is the kernel defined in the coresolution (\ref{eq3}).
	\end{enumerate}  \label{lemma4dotonedot3}
\end{Lemma}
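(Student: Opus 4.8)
The plan is to prove both parts by induction on $r$, using the structural exact sequences from Proposition \ref{prop3dot4dot3} together with the vanishing results already established in Lemmas \ref{lemma4dot2} and \ref{4dot1dot1}. The key input is that $X = {}^\natural S^{\la 2, 1\ra}$ has length two with composition factors $D^{\la 4, 3\ra}$ (top) and $D^{\la 3, 1\ra}$ (socle).

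First I would prove (a). By Proposition \ref{prop3dot4dot3}, each $W_r$ is an iterated extension with quotients of the form $\Omega^{p-u}Y^{\la v\ra}$ with $u\geq a+2$ and $2\leq v\leq p-1$ (where $r \in \{2a-1, 2a\}$). Since $\Ext$ is additive on short exact sequences, it suffices to show $\Ext_\Lambda^i(\Omega^{p-u}Y^{\la v\ra}, X)=0$ for all $i\geq 0$ and all such $u,v$. By dimension shift, $\Ext_\Lambda^i(\Omega^{p-u}Y^{\la v\ra}, X)\cong \Ext_\Lambda^{i+(p-u)}(Y^{\la v\ra}, X)$ modulo the usual care with low degrees, and since $u\geq a+2\geq 3$ we always land in degree $\geq 1$; more precisely, for $i\geq 1$ we get $\Ext_\Lambda^{i+p-u}(Y^{\la v\ra},X)$ with exponent $\geq 1$, and for $i=0$ we get $\overline{\Hom}_\Lambda(\Omega^{p-u}Y^{\la v\ra}, X)$ which equals $\Ext_\Lambda^{p-u}(Y^{\la v\ra}, X)$ when $p-u\geq 1$ and is a quotient of $\Hom_\Lambda(\Omega^{p-u}Y^{\la v\ra},X)$. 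By Lemma \ref{lemma4dot2}, $\Ext_\Lambda^j(Y^{\la v\ra}, X)=0$ for all $j\geq 1$ and $v\neq \la p\ra$ — and here $v\le p-1$, so this applies. The only subtle case is $\Hom_\Lambda(\Omega^{p-u}Y^{\la v\ra}, X)$ when $u = p$, i.e. the genuine $\Hom$ from a Young module (not a proper syzygy); but the bound $u\geq a+2$ combined with the constraint $a+2\leq p$ in Proposition \ref{prop3dot4dot3} shows that $u = p$ forces $a = p-2$, and then the relevant quotient $\Omega^{0}Y^{\la v\ra} = Y^{\la v\ra}$ appearing in $W_r$ has $v = a+1$ or $v = a$, i.e. $v\in\{p-2,p-1\}$, and one checks $\Hom_\Lambda(Y^{\la p-2\ra}, X)=\Hom_\Lambda(Y^{\la p-1\ra}, X)=0$ since the tops $D^{\la p-1\ra}$, $D^{\la p\ra}$ of these Young modules are not composition factors of $X$ (as $p>3$). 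This gives (a).

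For (b), apply $\Hom_\Lambda(-, X)$ to the short exact sequence $0\to S^{\la a+2, a\ra}\to Z_r\to W_r\to 0$ (for $r=2a-1$) or $0\to S^{\la a+2,a+1\ra}\to Z_r\to W_r\to 0$ (for $r=2a$) from Proposition \ref{prop3dot4dot3}. Since $\Ext_\Lambda^i(W_r, X)=0$ for all $i\geq 0$ by part (a), the long exact sequence collapses to give $\Hom_\Lambda(Z_r, X)\cong \Hom_\Lambda(S^{\la a+2,a\ra}, X)$ (resp. $\cong\Hom_\Lambda(S^{\la a+2,a+1\ra},X)$). Thus everything reduces to computing $\Hom$ from a distinguished Specht module $S^{\la u,v\ra}$ ($u-v\in\{1,2\}$) into $X$. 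Here I would use the Loewy description of Specht modules recalled before Remark \ref{rmk2dot4dot4}: a distinguished Specht module $S^{\la u,v\ra}$ with $u-v=2$ has top $D^{\la u,v\ra}$; with $u-v=1$ it is uniserial. For $r=1$ we have $a=1$ and the relevant module is $S^{\la 3,1\ra}$, whose top $D^{\la 3,1\ra}$ equals the socle of $X$, giving a unique nonzero map (up to scalar), hence $\Hom_\Lambda(Z_1, X)=k$ — consistent with Example \ref{example3dot4dot1}. For $2\leq r\leq 2p-4$, I must show $\Hom_\Lambda(S^{\la a+2,a\ra},X)=0$ when $r=2a-1\geq 3$ (so $a\geq 2$) and $\Hom_\Lambda(S^{\la a+2,a+1\ra},X)=0$ when $r=2a\geq 2$ (so $a\geq 1$). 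In the first case the top of $S^{\la a+2,a\ra}$ is $D^{\la a+2,a\ra}$ with $a\geq 2$, which is neither $D^{\la 3,1\ra}$ nor $D^{\la 4,3\ra}$, so any nonzero map would have image inside $\rad X = D^{\la 3,1\ra}$, forcing $D^{\la 3,1\ra}$ to be a composition factor of $S^{\la a+2,a\ra}$; from the mesh/Loewy structure the composition factors of $S^{\la a+2,a\ra}$ are labelled by the mesh with corners $\la a+2, a\ra$ and $\la a+3, a+1\ra$, which does not contain $\la 3,1\ra$ once $a\geq 2$, giving the vanishing. In the second case $S^{\la a+2,a+1\ra}$ is uniserial; I need its socle not to be $D^{\la 3,1\ra}$ and (for the top) $D^{\la a+2,a+1\ra}\neq D^{\la 3,1\ra},D^{\la 4,3\ra}$, both of which hold for the relevant range of $a$, so again $\Hom_\Lambda(S^{\la a+2,a+1\ra},X)=0$.

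The main obstacle is the careful bookkeeping in part (a) around the degree-zero ($\Hom$, not $\Ext$) terms and the edge case where a quotient $\Omega^{p-u}Y^{\la v\ra}$ of $W_r$ is an honest (non-syzygy) Young module, i.e. $u=p$: there Lemma \ref{lemma4dot2} gives nothing for $\Hom$, and one genuinely needs to identify which Young modules occur and check their tops against the composition factors of $X$. Everything else is a mechanical application of the long exact sequence, dimension shift, and the explicit Loewy layers from Theorem \ref{thm:proj} and Remark \ref{rmk2dot4dot4}. I should also double-check the indexing convention in Proposition \ref{prop3dot4dot3} (the shift between $W_r$ built from $Y^{\la a+1\ra}$ versus $Y^{\la a\ra}$ depending on parity) to make sure the distinguished Specht submodule's label is correctly tracked through each inductive step.
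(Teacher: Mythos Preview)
Your argument for $\Ext^{i\geq 1}(W_r,X)=0$ via dimension shift is fine, and part (b) follows from (a) essentially as you describe (modulo one small correction below). The genuine gap is in the $i=0$ part of (a).

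You correctly observe that for a proper syzygy ($p-u\geq 1$) one has $\overline{\Hom}_\Lambda(\Omega^{p-u}Y^{\la v\ra},X)\cong \Ext_\Lambda^{p-u}(Y^{\la v\ra},X)=0$, and you note this is only a \emph{quotient} of $\Hom$. But then you treat the case $u<p$ as already settled and move on to the ``subtle'' case $u=p$. This is backwards: it is precisely the proper syzygies that are dangerous. Concretely, $\Omega^{p-2}Y^{\la 2\ra}\cong P_{\Lambda_0}^{\la 3,1\ra}$ has simple top $D^{\la 3,1\ra}=\soc X$, so $\Hom_\Lambda(\Omega^{p-2}Y^{\la 2\ra},X)\neq 0$ even though $\overline{\Hom}=0$ (the map factors through the projective $P_{\Lambda_0}^{\la 3,1\ra}$ itself). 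Your dimension-shift argument therefore cannot distinguish this module from the harmless ones. What actually saves the day is that the subquotients of $W_r$ all satisfy $u\geq a+2\geq 3$, hence have syzygy degree $p-u\leq p-3$, so the one problematic module $\Omega^{p-2}Y^{\la 2\ra}$ never occurs. The paper establishes $\Hom(W_r,X)=0$ by checking directly which Brauer induced modules $U{\alpha\choose\beta}$ can map to $X$ (only those with top Specht quotient $S^{\la 3,1\ra}$, i.e.\ only $P_{\Lambda_0}^{\la 3,1\ra}$) and then observing this module is absent from every $W_r$. You need some version of this argument; the stable-Hom vanishing alone is not enough.

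Two smaller points. Your claim that $u=p$ forces $a=p-2$ is incorrect: the constraint is $u\geq a+2$, so $u=p$ is allowed for every $a\leq p-2$, and in fact Young modules $Y^{\la v\ra}$ with various $v$ do occur as subquotients of $W_r$ for large $r$. (Fortunately $\Hom(Y^{\la v\ra},X)=0$ for all $1\leq v\leq p-1$ anyway, since these Young modules have no composition factor in common with $X$.) In part (b), your check ``$D^{\la a+2,a+1\ra}\neq D^{\la 4,3\ra}$'' fails at $a=2$: the top of $S^{\la 4,3\ra}$ \emph{is} $D^{\la 4,3\ra}$. The vanishing $\Hom(S^{\la 4,3\ra},X)=0$ still holds, but because $D^{\la 3,1\ra}$ is not a composition factor of $S^{\la 4,3\ra}$, so no surjection onto $X$ exists.
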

\begin{proof}
	The module $W_r$ is an iterated extension with quotients being syzygies of $Y^\lambda$ with $\lambda\in \{\la 2\ra, \ldots, \la p-1\ra\}$. By Lemma \ref{lemma4dot2}, $\Ext_\Lambda^i(\Omega^{p-u}Y^{\la v\ra}, X)\cong \Ext_\Lambda^{i+p-u}(Y^{\la v \ra}, X)=0$ for every $i\geq 1$. Thus, $\Ext_\Lambda^i(W_r, X)=0$ for every $i\geq 1$. So it remains to check that $\Hom_\Lambda(W_r, X)=0$. To do this, observe that the only Brauer induced module which has non-zero homomorphisms to $X$ is $P_{\Lambda_0}^{\la 3, 1\ra}\cong \Omega^{p-2}(Y^{\la 2 \ra})$. Indeed, given the composition factors of $X$, we only need to check Brauer induced modules which have top quotient $S^{\la 4, 3\ra}$ or $S^{\la 3, 1\ra}$. The Specht module $S^{\la 4, 3\ra}$ does not have $D^{\la 3, 1\ra}$ as a composition factor, so there is no non-zero homomorphism in such a case. Of course, there is a non-zero homomorphism from $S^{\la 3, 1\ra}$ to $X$, but $S^{\la 3, 1\ra}$ only occurs in a Specht filtration in a Brauer induced module for the Brauer induced module $P_{\Lambda_0}^{\la 3, 1\ra}\cong \Omega^{p-2}(Y^{\la 2\ra})$. On the other hand, $\Omega^{p-2}(Y^{\la 2\ra})$ does not occur in the iterated extension of some module $W_r$ (see Proposition \ref{prop3dot4dot3}). Thus, $\Hom_\Lambda(W_r, X)=0$ for every $r=1, \ldots, 2p-4$. So, (a) holds.
	
By the first part and by applying $\Hom_{\Lambda}(-, X)$ to the exact sequences of Proposition \ref{prop3dot4dot3} we get the following isomorphism 
	$$\Hom_\Lambda(Z_r, X)\cong \begin{cases}
		\Hom_\Lambda(S^{\la a+2, a\ra}, X), &\quad \text{ if } r=2a-1\\
		\Hom_\Lambda(S^{\la a+2, a+1\ra}, X)=0, &\quad \text{ if } r=2a
	\end{cases}. $$
Thus, the Claim (b) follows.
\end{proof}


\begin{Lemma} Assume that $p>3$. We have ${\rm Ext}^i_{\La}(Y^{\la p\ra},  X)=0$ for $1\leq i\leq t-2$ and $\Ext_\Lambda^{t-1}(Y^{\la p\ra},  X)\neq 0$, where $t=2(p-1)$. \label{lemma4dot3}
\end{Lemma}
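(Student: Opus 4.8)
The plan is to feed the coresolution $(\ref{eq3})$ of $S^{\la 2,1\ra}$ through the functor $\Hom_\Lambda(-,X)$ and perform dimension shifting, using Lemmas \ref{lemma4dot2} and \ref{lemma4dotonedot3} to kill all the intermediate groups. Writing $(\ref{eq3})$ as the string of short exact sequences $0\to S^{\la 2,1\ra}\to R_0\to Z_1\to 0$ and $0\to Z_r\to R_r\to Z_{r+1}\to 0$ for $r=1,\dots,2p-3$ (with $Z_{2p-2}=R_{2p-2}=Y^{\la p\ra}$), each $R_r$ with $r\le 2p-3$ is a direct sum of Young modules none of which is $Y^{\la p\ra}$, so $\Ext^{\geq 1}_\Lambda(R_r,X)=0$ by Lemma \ref{lemma4dot2}. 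Applying $\Hom_\Lambda(-,X)$ therefore yields dimension-shift isomorphisms $\Ext^i_\Lambda(Z_r,X)\cong\Ext^{i+1}_\Lambda(Z_{r+1},X)$ for all $i\geq 1$, together with right-exact sequences $\Hom_\Lambda(R_r,X)\to\Hom_\Lambda(Z_r,X)\to\Ext^1_\Lambda(Z_{r+1},X)\to 0$.

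First I would settle the vanishing for $1\le i\le t-2=2p-4$. Iterating the dimension shift gives $\Ext^i_\Lambda(Y^{\la p\ra},X)=\Ext^i_\Lambda(Z_{2p-2},X)\cong\Ext^1_\Lambda(Z_{2p-1-i},X)$ for $2\le i\le 2p-2$. Since $\Hom_\Lambda(Z_r,X)=0$ for $2\le r\le 2p-4$ by Lemma \ref{lemma4dotonedot3}(b), the right-exact sequences force $\Ext^1_\Lambda(Z_s,X)=0$ for $3\le s\le 2p-3$; as $2p-1-i$ runs over $\{3,\dots,2p-3\}$ when $i$ runs over $\{2,\dots,2p-4\}$, this gives $\Ext^i_\Lambda(Y^{\la p\ra},X)=0$ for $2\le i\le 2p-4$. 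For $i=1$ I would instead use $0\to Z_{2p-3}\to R_{2p-3}\to Z_{2p-2}\to 0$: here $R_{2p-3}=Y^{\la p-1\ra}$ and $Z_{2p-3}=S^{\la p-1\ra}$, and $\Hom_\Lambda(S^{\la p-1\ra},X)=0$ because $D^{\la p-1\ra}$ is not among the composition factors $D^{\la 4,3\ra},D^{\la 3,1\ra}$ of $X$ (using $p>3$); the right-exact sequence then gives $\Ext^1_\Lambda(Y^{\la p\ra},X)=\Ext^1_\Lambda(Z_{2p-2},X)=0$. (The edge case $i=0$ is immediate: $\Hom_\Lambda(Y^{\la p\ra},X)=[\soc X:D^{\la p\ra}]\,k=0$.)

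For the non-vanishing at $i=t-1=2p-3$ I would run the same shift down to the bottom of the coresolution: $\Ext^{2p-3}_\Lambda(Y^{\la p\ra},X)\cong\Ext^1_\Lambda(Z_2,X)$, and the case $r=1$ of the right-exact sequence identifies this with the cokernel of the restriction map $\Hom_\Lambda(R_1,X)\to\Hom_\Lambda(Z_1,X)$. By Lemma \ref{lemma4dotonedot3}(b) we have $\Hom_\Lambda(Z_1,X)\cong k$. On the other hand, by Example \ref{example3dot4dot1} the module $R_1$ is the direct sum of the injective hulls of $S^{\la 3,1\ra}$ and of the syzygy $\Omega^{p-3}Y^{\la 2\ra}=U{\la 4,3\ra \choose \la 4,2\ra}$; inspecting socles and Lemma \ref{lemmaprojectives} this is $R_1\cong P^{\la 4,2\ra}\oplus P^{\la 5,3\ra}$, whose heads $D^{\la 4,2\ra},D^{\la 5,3\ra}$ are both distinct from $\top X=D^{\la 4,3\ra}$, so $\Hom_\Lambda(R_1,X)=0$. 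Hence $\Ext^1_\Lambda(Z_2,X)\cong\Hom_\Lambda(Z_1,X)\cong k$, and therefore $\Ext^{2p-3}_\Lambda(Y^{\la p\ra},X)\neq 0$.

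The step I expect to be most delicate is this last one: identifying $R_1$ precisely and checking $\Hom_\Lambda(R_1,X)=0$ — equivalently, that the generating homomorphism $Z_1\to X$ does not extend over the injective hull $R_1$. This is exactly where the non-vanishing is detected, and it depends on the explicit submodule structure of $Z_1$ (and of $P^{\la 4,3\ra}$) and on composition-factor bookkeeping for the small Specht modules involved; a convenient reformulation is that any homomorphism from $R_1$ restricted to $Z_1$ factors through $Z_1\twoheadrightarrow\Omega^{p-3}Y^{\la 2\ra}\hookrightarrow P^{\la 3,1\ra}_{\Lambda_0}$ and $\Hom_\Lambda(P^{\la 3,1\ra}_{\Lambda_0},X)=[\top X:D^{\la 3,1\ra}]\,k=0$. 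Everything else reduces to the two preceding lemmas and routine dimension shifting; the case $p=3$ is excluded here and handled separately.
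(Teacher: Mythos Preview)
Your argument is correct and follows essentially the same route as the paper: dimension-shift along the coresolution $(\ref{eq3})$ using Lemma~\ref{lemma4dot2}, then read off vanishing and non-vanishing from Lemma~\ref{lemma4dotonedot3}(b) together with $\Hom_\Lambda(R_1,X)=0$; the paper differs only in that it appeals to Theorem~\ref{thm2dot2dot1} for $i=1,2$ rather than handling $i=1$ via $\Hom_\Lambda(S^{\la p-1\ra},X)=0$ as you do. Two small corrections: for $\Hom_\Lambda(R_1,X)=0$ you need that $D^{\la 4,2\ra}$ and $D^{\la 5,3\ra}$ are not \emph{composition factors} of $X$ (not merely distinct from $\top X$), and your closing ``convenient reformulation'' is wrong---$\Hom_\Lambda(P^{\la 3,1\ra}_{\Lambda_0},X)=[X:D^{\la 3,1\ra}]\,k=k$, not $0$---so that alternative check does not work and should be dropped.
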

\begin{proof}
	
	By Theorem \ref{thm2dot2dot1}, $$\Ext_\Lambda^i(Y^{\la p\ra}, X)\cong \Ext_\Lambda^i(S^{\la 2, 1\ra}, Y^{\la p\ra})\cong \Ext^i_{S_k(p, 2p)}(\Cs(\la 2, 1\ra), I(\la p\ra))=0$$ for $i=1, 2$. Now, assume that $i\geq 3$. Recall that the terms $R_r$ in (\ref{eq3}) are in $\add D(Q_\Lambda)$ and $Y^{\la p\ra }$ does not occur for $r\leq t-1$.
	
	By Lemma \ref{lemma4dot2}, applying $\Hom_\Lambda(-, X)$ to (\ref{eq3}) we obtain $\Ext_\Lambda^i(Z_{j-1}, X)\cong \Ext_\Lambda^{i+1}(Z_j, X)$ for all $i\geq 1$ and $2\leq j\leq t$. Hence,  dimension shifting yields $$\Ext_\Lambda^i(Y^{\la p\ra}, X)\cong \Ext_\Lambda^1(Z_{t-i+1}, X).$$
	Since $t-1\geq i\geq 3$, the module $Z_{t-i+1}$ is constructed in Proposition \ref{prop3dot4dot3}. For $t-1\geq i\geq 3$ consider the exact sequence $0\rightarrow Z_{t-i}\rightarrow R_{t-i}\rightarrow Z_{t-i+1}\rightarrow 0$ and apply $\Hom_\Lambda(-, X)$. Hence, by Lemma \ref{lemma4dot2}, we obtain an exact sequence
\begin{align}
	\Hom_\Lambda(R_{t-i}, X)\rightarrow \Hom_\Lambda(Z_{t-i}, X)\rightarrow \Ext_\Lambda^1(Z_{t-i+1}, X)\rightarrow 0. \label{eq16}
\end{align} By Lemma \ref{lemma4dotonedot3}, $\Hom_\Lambda(Z_{t-i}, X)=0$ for $3\leq i\leq t-2$. For $i=t-1$, the exact sequence (\ref{eq16}) becomes
$$\Hom_\Lambda(R_1, X)\rightarrow \Hom_\Lambda(Z_1, X)\cong k\rightarrow \Ext_\Lambda^1(Z_2, X)\rightarrow 0.$$ By Example \ref{example3dot4dot1}, $R_1=P^{\la 4, 2\ra}\bigoplus P^{\la 5, 3\ra}$. So, $\Hom_\Lambda(R_1, X)=0$ and thus $\Ext^{t-1}(Y^{\la p\ra}, X)\cong \Ext_\Lambda^1(Z_2, X)\cong k$.
\end{proof}

	\begin{Theorem}\label{maintheoremone}
		Let $k$ be an algebraically closed field with characteristic $p\geq 5$. Then  $$V^{\otimes 2p}\domdim_{S_k(p, 2p)} S_k(p, 2p)= 4(p-1).$$ In particular, $\Ext_{\Lambda_k(p, 2p)^{op}}^l(V^{\otimes 2p}, V^{\otimes 2p})=\Ext_{\Lambda_0^{op}}^l(Q_{\Lambda_0}, Q_{\Lambda_0})=0$ for $1\leq l\leq 4(p-1)-2$. 
	\end{Theorem}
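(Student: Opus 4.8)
The plan is to compute $Q\domdim_{A_0} A_0$ via the relative version of Mueller's theorem from \citep[Theorem 3.1.4]{Cr2}, which reduces the computation of the relative dominant dimension of the algebra to the relative dominant dimension of the characteristic tilting module $T = Q \oplus T(2^p)$ of $A_0$ relative to $Q$. Since $Q\domdim_{A_0} Q = \infty$ trivially, the bottleneck is $Q\domdim_{A_0} T(2^p)$. By Lemma \ref{4dot1dot1}, applying the (exact, duality) functor $D\Hom_{A_0}(Q, -)$ sends $T(2^p)$ to the dual Specht module $X = {}^\natural S^{\la 2,1\ra}$ over $\Lambda_0$, and more generally sends an $\add Q$-coresolution of $T(2^p)$ that stays exact under $\Hom_{A_0}(-,Q)$ to an $\add DQ$-coresolution of $X$ that stays exact under $\Hom_{\Lambda_0}(DQ, -)$. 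So the relative dominant dimension in question equals the relative $\add DQ$-codimension of $X$, i.e. the length of the coresolution (\ref{eq3}) constructed in Subsection \ref{coresolvingsingularSpechtmodule}, provided that coresolution is $\Hom_{\Lambda_0}(DQ,-)$-exact (equivalently, $\Hom_\Lambda({}^\iota DQ,-)$-exact, which amounts to the $\Ext$-vanishing below being enough).

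**Key steps.** First I would invoke the coresolution (\ref{eq3}) of $X$ by Young modules (terms in $\add DQ$) of length $2(p-1)$, built from Proposition \ref{prop3dot4dot3} and the final three steps in Subsection \ref{coresolvingsingularSpechtmodule}; this gives the upper bound $Q\domdim_{A_0} T(2^p) \le 2(p-1)$ after one checks the coresolution cannot be shortened. For the lower bound one must show the coresolution (\ref{eq3}) remains exact under $\Hom_\Lambda(DQ, -)$ for the first $2(p-1)-1$ steps, i.e. that $\Ext_\Lambda^i(Y^\mu, X) = 0$ for all Young summands $Y^\mu$ of $DQ$ and $1 \le i \le 2(p-1)-2$ — but by Lemma \ref{lemma4dot2} this is automatic for all $Y^\mu$ with $\mu \neq \la p\ra$ (they are projective or have the stated $\Ext$-vanishing against $X$), and by Lemma \ref{lemma4dot3} it holds for $Y^{\la p\ra}$ precisely in the range $1 \le i \le t-2$ with $t = 2(p-1)$, while $\Ext_\Lambda^{t-1}(Y^{\la p\ra}, X) \neq 0$. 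This last non-vanishing, combined with dimension shifting along (\ref{eq3}) (which identifies $\Ext_\Lambda^{t-1}(Y^{\la p\ra}, X)$ with $\Ext_\Lambda^1(Z_2, X)$, hence with $k$), pins the relative dominant dimension down to exactly $t = 2(p-1)$; since $T(2^p) \oplus Q$ is the characteristic tilting module, the relative Mueller theorem upgrades this to $Q\domdim_{A_0} A_0 = 2t = 4(p-1)$. Passing from the basic principal-block algebra $A_0$ back to $S_k(p,2p)$ uses Lemma \ref{lemma2dotonedotone} together with the fact that, as established in the proof of Theorem \ref{thm3:3:2}, the non-principal blocks of $\Lambda_k(p,2p)$ are Ringel duals of the corresponding blocks of $S_k(p,2p)$, so $V^{\otimes 2p}$ restricted there is a characteristic tilting module and contributes infinite relative dominant dimension; hence the minimum over blocks is attained on the principal block and equals $4(p-1)$. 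The final sentence on $\Ext$-vanishing then follows from the standard translation between relative dominant dimension of $A_0$ and vanishing of $\Ext^l_{\Lambda_0}(Q_{\Lambda_0}, Q_{\Lambda_0})$ in degrees $1 \le l \le Q\domdim_{A_0} A_0 - 2$ (see \cite{Cr2}), using Lemma \ref{lemma2dot1dot2} to pass between $\Ext$ over $\Lambda_0$ and the relevant $\Tor$/$\Ext$ groups, and Lemma \ref{lemma2dotonedotone} to move between $\Lambda_0$ and $\Lambda_k(p,2p)$.

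**Main obstacle.** The hard part is not the homological bookkeeping but verifying that the coresolution (\ref{eq3}) is genuinely $\add DQ$-exact of the claimed minimal length — that is, controlling the modules $Z_r$ and the auxiliary modules $W_r$ well enough to know both that injective hulls of the distinguished Specht submodules of $Z_r$ lie in $\add DQ$ (so the construction does not escape $\add DQ$) and that no proper subcoresolution computes the relative codimension. This is exactly what Proposition \ref{prop3dot4dot3} and the explicit analysis in Subsection \ref{coresolvingsingularSpechtmodule} are designed to supply, so in the write-up the real work is citing those structural results correctly and checking the $\Ext$-vanishing inputs (Lemmas \ref{lemma4dot2}, \ref{lemma4dotonedot3}, \ref{lemma4dot3}) apply to every Young summand appearing; the transfer to $S_k(p,2p)$ and the $\Ext$-reformulation are then formal.
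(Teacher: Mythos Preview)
Your proposal is essentially correct and follows the same route as the paper: reduce to the principal block, compute $Q\codomdim_{A_0} T(2^p)$ via the Ext-vanishing of Lemmas \ref{lemma4dot2} and \ref{lemma4dot3} combined with \citep[Theorem 3.1.4]{Cr2}, then double via \citep[Theorem 3.1]{CE24}, and finally pass back to $S_k(p,2p)$ using that the non-principal blocks contribute infinite relative dominant dimension.

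Two minor inaccuracies worth correcting in your write-up. First, the coresolution (\ref{eq3}) is a coresolution of $S^{\la 2,1\ra}$, not of $X={}^\natural S^{\la 2,1\ra}$; in the paper it is not invoked directly in the proof of this theorem but only inside Lemma \ref{lemma4dot3} (for dimension shifting to compute $\Ext_\Lambda^i(Y^{\la p\ra},X)$). The paper's argument does not exhibit an $\add Q$-coresolution of $T(2^p)$ at all; instead it uses \citep[Theorem 3.1.4]{Cr2} purely as an Ext/Tor vanishing criterion for $Q\codomdim_{A_0} T(2^p)$, so your discussion of ``upper bound from the length of the coresolution'' and ``checking it cannot be shortened'' is unnecessary. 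Second, the identity $\domdim = 2\cdot\codomdim T$ comes from \citep[Theorem 3.1]{CE24}, not from the relative Mueller theorem, and the paper works with $Q\codomdim_{A_0} T(2^p)$ throughout rather than $Q\domdim_{A_0} T(2^p)$.
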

	\begin{proof} Let $T$ be the characteristic tilting module of $S_k(p, 2p)$.  Then $\add T=\add V^{\otimes 2p}\oplus T(2^p)$. 
	By Theorem 3.1 of \cite{CE24}, we have $V^{\otimes 2p}\domdim_{S_k(p, 2p)} S_k(p, 2p)=2V^{\otimes 2p}\codomdim_{S_k(p, 2p)} T$.
		
		By Theorem 8.1.2 of \cite{Cr2}, $V^{\otimes 2p}\codomdim_{S_k(p, 2p)} T\geq 2$ (see also \citep[Remark 8.1.1]{Cr2}). Recall that $S_k(p, 2p)=A_0\oplus A_1$, where $A_0$ is the principal block of $S_k(p, 2p)$. So, by Corollary 3.1.9 of \cite{Cr2}, we obtain $Q\codomdim_{A_0} T_0\geq V^{\otimes  2p}\codomdim_{S_k(p, 2p)} T\geq 2$ and $\add T_0=\add Q\oplus T(2^p)$. Moreover, the maximal direct summand of $V^{\otimes 2p}$ which is an $A_1$-module is a characteristic tilting module. Write $\Lambda:= \Lambda_k(p, 2p)$. 
		Since $D(Q_\Lambda)$ is self-dual we have the following isomorphisms $$\Ext_\Lambda^i(D(Q_\Lambda), X)=\Ext_\Lambda^i(D(Q_\Lambda), {}^\natural S^{\langle 2, 1\rangle })\cong \Ext_\Lambda^i(S^{\langle 2, 1\rangle }, D(Q_\Lambda))\cong \Ext_{\Lambda^{op}}^i(Q_\Lambda, DS^{\langle 2, 1\rangle }).$$
		
		By Lemmas \ref{lemma4dot2},  \ref{lemma4dot3} and \ref{lemma2dot1dot2}, \begin{align*}
			D\Tor_i^\Lambda(Q_\Lambda, \Hom_{A_0}(Q, T(2^p)))\cong \Ext_{\Lambda^{op}}^i(Q_\Lambda, D\Hom_{A_0}(Q, T(2^p)))\cong\Ext_{\Lambda^{op}}^i(Q_\Lambda, DS^{\langle 2, 1\rangle })=0
		\end{align*} for $1\leq i \leq 2(p-1)-2$. Also, $\Ext_{\Lambda}^{2(p-1)-1}(D(Q_\Lambda), X)\neq 0$. By Theorem 3.1.4 of \cite{Cr2} and Lemma \ref{lemma2dot1dot2},  we have $Q\codomdim_{A_0} T(2^p)=2(p-1)$. By Theorem 3.1 of \cite{CE24}, $Q\domdim_{A_0} A_0=4(p-1)$ and thus $Q\domdim_{S_k(p, 2p)} S_k(p, 2p)=4(p-1)$. This means that $V^{\otimes 2p}\domdim_{S_k(p, 2p)} S_k(p, 2p)=4(p-1)$ (see \citep[Corollary 3.1.9]{Cr2}). The last claim follows now by \citep[Theorem 3.1.4(ii)]{Cr2} and Lemma \ref{lemma2dot1dot2}.
	\end{proof}

	\subsection{A tilting module built from Young modules} \label{tiltingmodulebuiltfromYoungmodules}
	Recall that a module $M$ over a finite-dimensional algebra $A$ is \emph{tilting} if $M$ is self-orthogonal, it has finite projective dimension and there exists an exact sequence $0\rightarrow A\rightarrow X_0\rightarrow\cdots\rightarrow X_t\rightarrow 0$ for some $t$ with $X_i\in \add M$.
	
	From Proposition \ref{5dotonedotone} it follows that $\Lambda_0$ is coresolved with a finite number of terms by direct sums of direct summands of $\bigoplus_{\lambda\neq \la p \ra} Y^\lambda$. Moreover, by Remark \ref{pdimYoungmodules}, $\bigoplus_{\lambda\neq \la p \ra} Y^\lambda$ has projective dimension $p-2$. So, to show that $\bigoplus_{\lambda\neq \la p \ra} Y^\lambda$ is a tilting module, it remains to verify that $\Ext_{\Lambda_0}^i(\bigoplus_{\lambda\neq \la p \ra} Y^\lambda, \bigoplus_{\lambda\neq \la p \ra} Y^\lambda)=0$ for all $i>0$. 
	We show that ${\rm Ext}^i_{\La}(Y^{\lambda}, Y^{\mu})=0$ where
	$Y^{\lambda}$ and $Y^{\mu}$ are  non-projective
	Young modules in $\Lambda_0$, except the case $\lambda =\mu = \la p\ra$. 
	We may assume $\lambda \neq \la p\ra$ since we have by duality
	$${\rm Ext}^i_{\La} (Y^{\mu}, Y^{\lambda}) \cong {\rm Ext}^i_{\La}({}^\natural Y^{\lambda}, {}^\natural Y^{\mu})\cong \Ext_{\La}^i(Y^\lambda, Y^\mu).$$
	
	\begin{Lemma}Let $p>3$. Let $\lambda\neq \la p\ra$ and $\mu$ be two partitions of $2p$ in at most $p$ parts with $p$-weight two. Then,
		$\Ext_{\Lambda}^i(Y^\lambda, Y^\mu)=0$ for all $i>0$.\label{lema4dot2dot1}
	\end{Lemma}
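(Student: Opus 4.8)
The goal is to show $\Ext_\Lambda^i(Y^\lambda, Y^\mu) = 0$ for all $i > 0$, for all relevant pairs of $p$-weight-two partitions in $\Lambda_0$ with $\lambda \neq \la p\ra$. The plan is to separate according to whether $Y^\lambda$ is projective. If $Y^\lambda$ is a projective $\Lambda_0$-module, the vanishing is immediate. So assume $Y^\lambda$ is non-projective and $\lambda \neq \la p\ra$; by Proposition \ref{5dotonedotone} the non-projective Young modules other than $Y^{\la p\ra}$ are exactly $Y^{\la a\ra}$ for $1 \leq a \leq p-1$ and $Y^{\la p, p-1\ra}$, each of projective dimension $p-2$, and each has a projective resolution in which all terms except the two end terms are projective-injective. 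The first observation is that for $1 \leq i \leq p-3$ the result follows directly from Theorem \ref{thm2dot2dot1}(c) applied to the characteristic tilting module of $S_k(p, 2p)$: writing $Y^\lambda = LI(\lambda)$ and $Y^\mu = LI(\mu)$ via Equation (\ref{eqone}), we get $\Ext_\Lambda^i(Y^\lambda, Y^\mu) \cong \Ext_{S_k(p, 2p)}^i(I(\lambda), I(\mu)) = 0$ for $0 < i \leq p-3$.

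Next I would handle the remaining degrees $i = p-2$ (degrees $i > p-2$ vanish since $\pdim_\Lambda Y^\lambda = p-2$). Here the standard dimension-shift gives
$$\Ext_\Lambda^{p-2}(Y^\lambda, Y^\mu) \cong \overline{\Hom}_\Lambda(\Omega^{p-2}Y^\lambda, Y^\mu),$$
the stable-hom group, or more precisely, one uses the short exact sequence $0 \to \Omega^{p-2}Y^\lambda \to P \to \Omega^{p-3}Y^\lambda \to 0$ with $P$ projective-injective and applies $\Hom_\Lambda(-, Y^\mu)$, which shows $\Ext_\Lambda^{p-2}(Y^\lambda, Y^\mu)$ is a quotient of $\Hom_\Lambda(\Omega^{p-2}Y^\lambda, Y^\mu)$. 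By Proposition \ref{5dotonedotone}, $\Omega^{p-2}Y^\lambda$ is one of the non-injective projectives $P_{\Lambda_0}^{\la a+1,1\ra}$, $P_{\Lambda_0}^{\la 3,2\ra}$, or $P_{\Lambda_0}^{\la 1\ra}$, whose tops are $D^{\la a+1,1\ra}$, $D^{\la 3,2\ra}$, $D^{\la 1\ra}$ respectively. Dually, since $Y^\mu$ is self-dual under $\natural$, its socle is $\soc S^\mu$, which for the non-projective case is $D^{\la a'+1\ra}$ (resp. $D^{\la p-1\ra}$) and for the projective case is read off from Theorem \ref{thm:proj}. A homomorphism $P_{\Lambda_0}^{\nu} \to Y^\mu$ is nonzero only if $D^\nu$ appears in $Y^\mu$; so I would check, using the Loewy/Specht-filtration data of Theorem \ref{thm:proj} and Lemma \ref{lemmaprojectives}, that none of the labels $\la a+1, 1\ra$, $\la 3, 2\ra$, $\la 1\ra$ occurs as a composition factor of any relevant $Y^\mu$ — equivalently that $\Hom_\Lambda(\Omega^{p-2}Y^\lambda, Y^\mu) = 0$. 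This mirrors the argument already used in Lemma \ref{lemma4dot2} for the target $X$, and it reduces to a finite composition-factor bookkeeping.

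The main obstacle I anticipate is the middle range of degrees $p-2 < i$, but since $\pdim_\Lambda Y^\lambda = p-2$ there simply is no such range — the delicate point is really degree $i = p-2$ together with making sure the bound $i \le p-3$ from Theorem \ref{thm2dot2dot1}(c) is actually available here (it is, because $p > 3$ so $p - 3 \ge 1$, and when $p = 5$ one must separately treat $i = p-2 = 3$ by the composition-factor argument; for $p = 3$ the lemma is excluded by hypothesis). So the genuine work is the composition-factor check in degree $p-2$. One clean way to organise it: observe that $Y^\mu$ for non-projective $\mu$ has all its composition factors among the labels appearing in its two-step Specht filtration (items (i)–(iii) of the Young-module list), none of which are $\la 1\ra$, $\la 3, 2\ra$, or $\la a+1, 1\ra$ with $a \geq 2$; and for projective $Y^\mu = P^\nu$ one uses that these three labels lie on the right edge of the Gabriel quiver and hence (by Lemma \ref{lemmaprojectives} and Theorem \ref{thm:proj}) do not appear in the projective-injective $P^\nu$'s that remain projective for $\Lambda_0$. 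Assembling these cases yields $\Ext_\Lambda^i(Y^\lambda, Y^\mu) = 0$ for all $i > 0$, completing the proof.
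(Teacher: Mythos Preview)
Your approach diverges from the paper's and contains a gap in degree $i=p-2$. The paper's proof is much shorter: for $i>p-2$ it uses $\pdim_\Lambda Y^\lambda\le p-2$ as you do, but for $0<i\le p-2$ it simply invokes Theorem~\ref{maintheoremone}, which (having just been established) gives $\Ext_\Lambda^i(Q,Q)=0$ for all $1\le i\le 4(p-1)-2$. Since both $Y^\lambda$ and $Y^\mu$ are summands of $DQ_\Lambda$, the vanishing for $0<i\le p-2$ is immediate; there is no need to treat $i=p-2$ separately.

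Your composition-factor bookkeeping in degree $p-2$ is incorrect. You claim that none of $D^{\la 1\ra}$, $D^{\la 3,2\ra}$, $D^{\la a+1,1\ra}$ occur in the relevant $Y^\mu$, but this fails already for $D^{\la 1\ra}$: the Specht module $S^{\la 1\ra}$ has top $D^{\la 1\ra}$, so $D^{\la 1\ra}$ is a composition factor of $Y^{\la 1\ra}$ (which contains $S^{\la 1\ra}$ as a submodule) and also of $P^{\la 2\ra}=Y^{\la p,1\ra}$ (which has $S^{\la 1\ra}$ in its Specht filtration by Theorem~\ref{thm:proj}(b) with $s=2$). Hence $\Hom_\Lambda(\Omega^{p-2}Y^{\la p,p-1\ra},Y^{\la 1\ra})=\Hom_\Lambda(P_{\Lambda_0}^{\la 1\ra},Y^{\la 1\ra})\neq 0$, and your argument does not show the required cokernel vanishes. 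When $Y^\mu$ is projective-injective the conclusion can be rescued because $\Hom_\Lambda(-,Y^\mu)$ is then exact, but for the non-injective target $Y^{\la 1\ra}$ you would need to verify directly that the restriction map $\Hom_\Lambda(P^{\la 2\ra},Y^{\la 1\ra})\to\Hom_\Lambda(P_{\Lambda_0}^{\la 1\ra},Y^{\la 1\ra})$ is surjective, and this you do not address.
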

	\begin{proof} 
			
				Assume that $i>p-2$. Then $\Ext^i_{\Lambda}(Y^\lambda, Y^\mu)=0$ because the projective dimension of $Y^\lambda$ is $p-2$ (see Remark \ref{pdimYoungmodules}).

		Assume that $0<i\leq p-2$. By Theorem \ref{maintheoremone}, we obtain
		\begin{align*}
			\Ext^i_{\Lambda}(Y^\lambda, Y^\mu)\subset \Ext_\Lambda^i(DQ, DQ)=0.
		\end{align*}
	\end{proof}

	\begin{Theorem}\label{tiltingtheorem}
		$D(Q_{\Lambda_0})/Y^{\la p\ra}=\bigoplus_{\lambda\neq \la p \ra} Y^\lambda$ is a tilting $\Lambda_0$-module. Moreover, the complement of the direct sum of copies of $Y^{\la p\ra}$ in $DV^{\otimes 2p}$ is a tilting module.
	\end{Theorem}
\begin{proof}
	By Lemma \ref{lema4dot2dot1} and Proposition \ref{5dotonedotone}, $D(Q_{\Lambda_0})$ is a tilting $\Lambda_0$-module. Since the direct summands of $V^{\otimes 2p}$ corresponding to a non-principal block of $S_k(p, 2p)$ are characteristic tilting modules in the respective block, the direct summands of $DV^{\otimes 2p}$  corresponding to a non-principal block of $\Lambda$ are full tilting modules. Thus, the complement of the direct sum of copies of $Y^{\la p\ra}$ in $DV^{\otimes 2p}$ is a tilting $\Lambda_k(p, 2p)$-module.
\end{proof}

	\begin{Remark}
		As recalled in Subsection \ref{sec2dot3}, the right Young modules are given by $DY^\lambda$ and so it follows from Theorem \ref{tiltingtheorem} that the complement of the direct sum of copies of $DY^{\la p\ra}$ (the simple Young right module) in $V^{\otimes 2p}$ is a tilting module (see also Theorem \ref{thm3:3:2}).
	\end{Remark}

	\subsection{The principal block of $S_k(3, 6)$ and $\Lambda_k(3, 6)$ in characteristic three}\label{sec4dot3}
	
In this subsection, our aim is to show that Theorem \ref{maintheoremone} and Theorem \ref{tiltingtheorem} are also valid in characteristic three.
	
	The appendix of \cite{zbMATH07389875} is also valid for $p=3$, that is, we get the description of Specht modules and indecomposable projective modules.
	The only difference occurs in \citep[Corollary B.6]{zbMATH07389875}, in the notation for the Specht module $S^{\la 2, 1\ra}$. In the case $p\geq 5$  the composition factors of $S^{\la 2, 1\ra}$
	are $D^{\la 3, 1\ra}$ and $D^{\la 4, 3\ra}$ but when $p=3$ the second composition factor of $S^{\la 2, 1\ra}$ is $D^{\la 3\ra}$. To write down the quiver, we take the right hand side upper corner of the general quiver, replace the vertices $\la 4, a\ra$ just by $\la a\ra$
	for $1\leq a\leq 3$. So, the Gabriel quiver of the principal block of $\Lambda_k(3, 6)$ is 
		\begin{equation*}
		\begin{tikzcd}
		 \langle 3\rangle \arrow[d, leftrightarrow] \arrow[drr, leftrightarrow]& &  \langle 3, 2\rangle \arrow[d, leftrightarrow]  \\
			 \langle 2\rangle \arrow[dr, leftrightarrow] \arrow[urr, leftrightarrow]& & \langle 3, 1\rangle \\
			  & \langle  1\rangle \arrow[ur, leftrightarrow] \\
		\end{tikzcd}.
	\end{equation*} The quiver and its relations of this algebra was obtained in \citep[Theorem 7.1]{zbMATH00646847} based on \cite{zbMATH04125660}.
Then  for the quotient $\Lambda_k(3, 6)\cong k\cS_6/I_3$ we have, as before, projective resolutions of non-projective and non-simple Young modules ending with 
the projective modules for the quotient which are not injective:
\begin{align}
	0\to P_{\Lambda_0}^{\la 3,2\ra} \to P^{\la 2\ra} \to Y^{\la 1\ra} \to 0 \label{equation18} \\
	0\to P_{\Lambda_0}^{\la 3,1\ra} \to P^{\la 3\ra} \to Y^{\la 2\ra} \to 0 \label{equation19} \\
	0\to P_{\Lambda_0}^{\la 1 \ra} \to P^{\la 2\ra} \to Y^{\la 3,2 \ra} \to 0. \label{equation20}
\end{align}

	To summarise, the projective and Young modules are the following:
	
	\begin{enumerate}[(i)]
		\item $P^{\la 3\ra} = Y^{\la 2, 1\ra}$ and $P^{\la 2\ra} = Y^{\la 3, 1\ra}$ and they are both projective and injective.
		\item $P_{\Lambda_0}^{\la 3, 2\ra} = U{\la 3,2\ra\choose \la 3, 1\ra}$, it is Brauer induced and it is isomorphic to  $\Omega Y^{\la 1\ra}$.
		\item $P_{\Lambda_0}^{\la 3,1\ra}  = U{\la 3,1\ra \choose \la 2, 1\ra}$, it is Brauer induced and it is isomorphic to $\Omega Y^{\la 2\ra}$. 
		\item $P_{\Lambda_0}^{\la 1\ra} = U{\la 1\ra \choose \la 3, 1\ra}$ and this is Brauer induced and is isomorphic to $\Omega Y^{\la 3, 2\ra}$.
		\item $Y^{\la 3 \ra}= S^{\la 3 \ra}\cong k$.
		\item The modules $Y^{\la 1\ra}$ and $Y^{\la 2\ra}$ and $Y^{\la 3, 2\ra}$ are the Young modules which are neither projective nor simple, and with respect to their Specht filtration, they are given as follows:
		$$Y^{\la 1 \ra}\cong \begin{matrix}  S^{\la 2\ra}  \cr 
			S^{\la 1\ra}  \end{matrix}, \quad Y^{\la 2\ra}\cong  \begin{matrix}  S^{\la 3\ra}  \cr 
			S^{\la 2\ra}  \end{matrix}, \quad Y^{\la 3, 2\ra}\cong  \begin{matrix}  S^{\la 2\ra}  \cr 
			S^{\la 3, 2\ra}  \end{matrix}. $$
	\end{enumerate}

	In particular, we have exactly the same exact sequences for $p=3$ as described in Proposition \ref{5dotonedotone}. It follows that the non-projective Young modules $Y^\lambda$ with $\lambda\neq \la 3\ra$ have projective dimension equal to one. We also have that $S^{\la 2, 1\ra}$  has length two, with top isomorphic to $D^{\la 3, 1\ra}$ and socle isomorphic to $D^{\la 3\ra}$.

	\begin{Lemma} Assume $p=3$ and let $X$ be the dual Specht module ${}^\natural S^{\la 2, 1 \ra}$. Then 
		\begin{enumerate}[(a)]
			\item If $\lambda \neq \la 3\ra$, then ${\rm Ext}^i_\Lambda(Y^{\lambda}, X)=0$ for all $i\geq 1$.
			\item We have ${\rm Ext}_\Lambda^i(Y^{\la 3\ra}, X)=0$ for $1\leq i \leq  2$ and ${\rm Ext}_\Lambda^3(Y^{\la 3\ra}, X)\neq 0$. 
		\end{enumerate}\label{4dot3dot1}
	\end{Lemma}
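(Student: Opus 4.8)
The plan is to follow the template of Lemmas \ref{lemma4dot2}, \ref{lemma4dotonedot3} and \ref{lemma4dot3}, now using the $p=3$ lists (1)--(6) following (\ref{equation20}): that $X={}^\natural S^{\la 2,1\ra}$ has precisely the two composition factors $D^{\la 3\ra}$ (its top) and $D^{\la 3,1\ra}$ (its socle), that $\dim_k\Hom_{\Lambda_0}(P,M)=[M:\top P]$ for every indecomposable projective $P$, and the self-duality isomorphism $\Ext^i_\Lambda(Y^\lambda,X)\cong\Ext^i_\Lambda(S^{\la 2,1\ra},Y^\lambda)$. The one structural point that changes from $p\geq 5$ is that every non-simple Young $\Lambda_0$-module now has projective dimension $p-2=1$ (read off from (\ref{equation18})--(\ref{equation20})), so that the coresolution of $S^{\la 2,1\ra}$ by $\add_{\Lambda_0}DQ$ has length only $2(p-1)=4$; I would write this coresolution out explicitly as the first step.

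\emph{Part (a).} For projective $Y^\lambda$ there is nothing to prove, so take $Y^\lambda\in\{Y^{\la 1\ra},Y^{\la 2\ra},Y^{\la 3,2\ra}\}$. Since these have projective dimension $1$, one has $\Ext^i_\Lambda(Y^\lambda,X)=0$ for $i\geq 2$, and $\Ext^1_\Lambda(Y^\lambda,X)=\coker\bigl(\Hom_\Lambda(P_0(Y^\lambda),X)\to\Hom_\Lambda(\Omega^1 Y^\lambda,X)\bigr)$ with $P_0(Y^\lambda)$ the projective cover. For $\lambda\in\{\la 1\ra,\la 3,2\ra\}$ the syzygy $\Omega^1 Y^\lambda$ equals $P_{\Lambda_0}^{\la 3,2\ra}$, resp. $P_{\Lambda_0}^{\la 1\ra}$, whose simple top $D^{\la 3,2\ra}$, resp. $D^{\la 1\ra}$, is not a composition factor of $X$, so that $\Hom_\Lambda(\Omega^1 Y^\lambda,X)=0$ and $\Ext^1_\Lambda(Y^\lambda,X)=0$. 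For $\lambda=\la 2\ra$ one applies $\Hom_\Lambda(-,X)$ to (\ref{equation19}): the two middle terms $\Hom_\Lambda(P^{\la 3\ra},X)$ and $\Hom_\Lambda(P_{\Lambda_0}^{\la 3,1\ra},X)$ have dimensions $[X:D^{\la 3\ra}]=1$ and $[X:D^{\la 3,1\ra}]=1$, hence $\dim_k\Ext^1_\Lambda(Y^{\la 2\ra},X)=\dim_k\Hom_\Lambda(Y^{\la 2\ra},X)$, and the latter vanishes because $Y^{\la 2\ra}$ is uniserial with factors $D^{\la 3\ra},D^{\la 2\ra},D^{\la 3\ra}$, so $\soc X=D^{\la 3,1\ra}$ is not among its composition factors. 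This proves (a); the same composition-factor observation also gives $\Hom_\Lambda(Y^\lambda,X)=0$ for each of the three $\lambda$, which I reuse in (b).

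\emph{Part (b).} The coresolution to be constructed is
$$0\to S^{\la 2,1\ra}\to\underbrace{Y^{\la 2,1\ra}}_{R_0}\to\underbrace{Y^{\la 3,1\ra}\oplus Y^{\la 2\ra}}_{R_1}\to\underbrace{Y^{\la 3,2\ra}\oplus Y^{\la 1\ra}}_{R_2}\to\underbrace{Y^{\la 2\ra}}_{R_3}\to\underbrace{Y^{\la 3\ra}}_{R_4}\to 0,$$
with every $R_j$ in $\add_{\Lambda_0}DQ$ and $Y^{\la 3\ra}$ appearing only in $R_4$; it arises by taking injective hulls of the non-Young subquotients — the injective hull of $S^{\la 2,1\ra}$ is $Y^{\la 2,1\ra}=P^{\la 3\ra}$, giving $Z_1$ with $0\to S^{\la 3,1\ra}\to Z_1\to Y^{\la 2\ra}\to 0$ via the Brauer-induced submodule $P_{\Lambda_0}^{\la 3,1\ra}\subseteq P^{\la 3\ra}$, and the injective hull of $S^{\la 3,1\ra}$ is $Y^{\la 3,1\ra}=P^{\la 2\ra}$, giving $Z_2$ with $0\to S^{\la 3,2\ra}\to Z_2\to Y^{\la 1\ra}\to 0$ via $P_{\Lambda_0}^{\la 3,2\ra}\subseteq P^{\la 2\ra}$ — and, at the last step, embedding the remaining distinguished Specht modules $S^{\la 3,2\ra},S^{\la 2\ra}$ into the non-injective Young modules $Y^{\la 3,2\ra},Y^{\la 2\ra}$ in whose Specht filtrations they sit, so that $Z_3\cong S^{\la 2\ra}$ and $Z_4=Y^{\la 2\ra}/S^{\la 2\ra}=S^{\la 3\ra}=Y^{\la 3\ra}$. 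By (a) together with projectivity of the projective summands one gets $\Ext^{\geq 1}_\Lambda(R_j,X)=0$ for $j\leq 3$, so dimension shifting along $0\to Z_j\to R_j\to Z_{j+1}\to 0$ yields, for $i=1,2,3$,
$$\Ext^{i}_\Lambda(Y^{\la 3\ra},X)\cong\coker\bigl(\Hom_\Lambda(R_{4-i},X)\to\Hom_\Lambda(Z_{4-i},X)\bigr).$$
One then computes, applying $\Hom_\Lambda(-,X)$ to the extensions defining $Z_1,Z_2$ and using (a): $\Hom_\Lambda(Z_1,X)\cong\Hom_\Lambda(S^{\la 3,1\ra},X)\cong k$ (a nonzero map factors through $D^{\la 3,1\ra}=\top S^{\la 3,1\ra}=\soc X$, since $D^{\la 3\ra}$ is not a composition factor of $S^{\la 3,1\ra}$), $\Hom_\Lambda(Z_2,X)\cong\Hom_\Lambda(S^{\la 3,2\ra},X)=0$ (its top $D^{\la 3,2\ra}$ is not a composition factor of $X$), $\Hom_\Lambda(Z_3,X)=\Hom_\Lambda(S^{\la 2\ra},X)=0$, $\Hom_\Lambda(R_3,X)=\Hom_\Lambda(Y^{\la 2\ra},X)=0$, and $\Hom_\Lambda(R_1,X)=0$ because $\Hom_\Lambda(Y^{\la 3,1\ra},X)=[X:D^{\la 2\ra}]=0$ and $\Hom_\Lambda(Y^{\la 2\ra},X)=0$. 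Substituting gives $\Ext^1_\Lambda(Y^{\la 3\ra},X)=\coker(0\to 0)=0$, $\Ext^2_\Lambda(Y^{\la 3\ra},X)=\coker(\Hom_\Lambda(R_2,X)\to 0)=0$, and $\Ext^3_\Lambda(Y^{\la 3\ra},X)=\coker(0\to k)\cong k\neq 0$, which is (b).

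\emph{Where the work lies.} Once the length-$4$ coresolution is in place everything is bookkeeping with the known Loewy and Specht-filtration structures, so the real content is producing that coresolution, since Proposition \ref{prop3dot4dot3} was proved only for $p\geq 5$. Concretely one has to identify the Brauer-induced subquotients of $Y^{\la 2,1\ra}=P^{\la 3\ra}$ and $Y^{\la 3,1\ra}=P^{\la 2\ra}$ that yield the extensions $Z_1,Z_2$, and — the single point needing an auxiliary vanishing — justify the last embedding $Z_2\hookrightarrow Y^{\la 3,2\ra}\oplus Y^{\la 1\ra}$, which relies on $\Ext^1_{\Lambda_0}(Y^{\la 1\ra},Y^{\la 3,2\ra})=0$, the $p=3$ analogue of the Young-module self-orthogonality used before (\ref{eq15}); for $p=3$ this can be checked by hand from the quiver and relations in \citep[Theorem 7.1]{zbMATH00646847}, the algebra $\Lambda_0$ having only five simple modules. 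With that settled the $\Hom$-computations above are immediate from the composition factors of $X$ and of the distinguished Specht modules, exactly as in Lemmas \ref{lemma4dotonedot3} and \ref{lemma4dot3}.
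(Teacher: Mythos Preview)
Your proof of part (a) is essentially the same as the paper's: both reduce to $i=1$ using $\pdim Y^\lambda=1$, observe that for $\lambda\in\{\la 1\ra,\la 3,2\ra\}$ the relevant Hom-spaces vanish outright, and for $\lambda=\la 2\ra$ compare dimensions along the projective resolution (\ref{equation19}).

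For part (b), your argument is correct but takes a genuinely different route. The paper does not build the length-$4$ coresolution of $S^{\la 2,1\ra}$ at all. Instead it chains three short exact sequences coming from Specht filtrations:
\[
0\to S^{\la 2\ra}\to Y^{\la 2\ra}\to Y^{\la 3\ra}\to 0,\qquad
0\to S^{\la 1\ra}\to Y^{\la 1\ra}\to S^{\la 2\ra}\to 0,\qquad
0\to S^{\la 3,1\ra}\to P_{\Lambda_0}^{\la 1\ra}\to S^{\la 1\ra}\to 0,
\]
applying $\Hom_\Lambda(-,X)$ to each and using only part (a). This gives $\Ext^1_\Lambda(Y^{\la 3\ra},X)=0$ directly, then $\Ext^i_\Lambda(Y^{\la 3\ra},X)\cong\Ext^{i-1}_\Lambda(S^{\la 2\ra},X)\cong\Ext^{i-2}_\Lambda(S^{\la 1\ra},X)$ for $i=2,3$, and finally $\Ext^1_\Lambda(S^{\la 1\ra},X)\cong\Hom_\Lambda(S^{\la 3,1\ra},X)=k$. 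The paper's approach is shorter and sidesteps precisely the point you flag as ``where the work lies'': it never needs the vanishing $\Ext^1_{\Lambda_0}(Y^{\la 1\ra},Y^{\la 3,2\ra})=0$, because it never embeds $Z_2$ into a sum of non-injective Young modules. Your approach, by contrast, explicitly produces the $p=3$ analogue of the coresolution (\ref{eq3}); this is extra work here, but it yields an object of independent interest (for instance, it would allow a uniform proof of Theorem~\ref{maintheorempair} for $p=3$ without appealing to Parker's computation of $\gldim S_k(3,6)$).
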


	\begin{proof} \  We can assume that   $Y^{\lambda}$ is not projective.  Recall that $X$ has length two, with top isomorphic to $D^{\la 3\ra}$ and socle
		$D^{\la 3, 1\ra}$. To prove (a), assume that $\lambda \neq \la 3\ra$. By Equations (\ref{equation18}), (\ref{equation19}) and (\ref{equation20}),
		 $Y^{\lambda}$ has projective dimension $1=p-2$ and we only need the case $i=1$. 
		We apply the functor ${\rm Hom}_{\La}(-, X)$ to the minimal projective resolution of
		$Y^{\lambda}$.  If $\lambda = \la 1\ra$ or $\la 3,2\ra$ then this gives the sequence which is identically zero and hence ${\rm Ext}^1_\Lambda(Y^{\lambda}, X)$ is zero.  This leaves
		$$0\to {\rm Hom}_\Lambda(Y^{\la 2\ra}, X)\to {\rm Hom}_\Lambda(P^{\la 3\ra}, X) \to {\rm Hom}_\Lambda(P^{\la 3,1\ra}_{\Lambda_0}, X) \to  {\rm Ext}_\Lambda^1(Y^{\la 2 \ra}, X) \to 0.$$
		The second and third terms are isomorphic to $k$ and the first term is zero, hence the Ext space is zero. So (a) holds.
		
		For (b), we can now use part (a).
 We apply ${\rm Hom}_\Lambda(-, X)$ to the exact sequence 
		$$0\to S^{\la 2\ra}\to Y^{\la 2\ra} \to Y^{\la 3\ra} \to 0.$$
		In the resulting long exact sequence, all Hom spaces are zero and hence ${\rm Ext}_\Lambda^1(Y^{\la 3\ra}, X)=0$. Moreover,
		\begin{align}
			{\rm Ext}^i_\La(Y^{\la 3\ra}, X)\cong {\rm Ext}^{i-1}_\La(S^{\la 2\ra}, X) \label{e17}
		\end{align}
		for $i=2, 3$. 
		Now we apply ${\rm Hom}_\La(-, X)$ to the exact sequence
		$$0\to S^{\la 1\ra}\to Y^{\la 1\ra} \to S^{\la 2\ra}\to 0.$$
		In the resulting long exact sequence, the Hom spaces are zero and thus ${\rm Ext}_\Lambda^1(S^{\la 2\ra}, X)=0$. Moreover,
		\begin{align}
			{\rm Ext}_\La^2(S^{\la 2\ra}, X) \cong {\rm Ext}_\La^1(S^{\la 1\ra}, X). \label{eq18}
		\end{align}
		Applying ${\rm Hom}_\La(-, X)$ to the exact sequence
		$0\to S^{\la 3, 1\ra} \to P_{\La}^{\la 1\ra} \to S^{\la 1\ra}\to 0$
		gives
		$$0 = {\rm Hom}_\La(P_{\La}^{\la 1\ra}, X ) \to {\rm Hom}_\La(S^{\la 3, 1\ra}, X) = k \to {\rm Ext}_\La^1(S^{\la 1\ra}, X) \to 0$$
		and ${\rm Ext}^1_\La(S^{\la 1\ra}, X)=k$. By (\ref{e17}) and (\ref{eq18}), we obtain that $\Ext_\Lambda^2(Y^{\la 3\ra}, X)\cong \Ext_\La^1(S^{\la 2\ra}, X)=0$ and $\Ext_\Lambda^3(Y^{\la 3\ra}, X)\cong \Ext_\La^1(S^{\la 1\ra}, X)\cong k$.
	\end{proof}
	
	\begin{Cor}\label{4dot3dot2}
			Let $k$ be an algebraically closed field with characteristic $3$. Write $\Lambda=\End_{S_k(3, 6)}(V^{\otimes 6})^{op}$. Then, the following assertions hold.
			\begin{enumerate}[(a)]
				\item $V^{\otimes 6}\domdim_{S_k(3, 6)} S_k(3, 6)= 8$. In particular, $\Ext_{\Lambda^{op}}^l(V^{\otimes 6}, V^{\otimes 6})=\Ext_{\Lambda_0^{op}}^l(Q, Q)=0$ for $1\leq l\leq 6$. 
				\item Let $\lambda\neq \la p\ra$ and $\mu$ be two partitions of $6$ in at most $3$ parts with $3$-weight two. Then,
				$\Ext_{\Lambda}^i(Y^\lambda, Y^\mu)=0$ for all $i>0$.
				\item $DQ/Y^{\la 3\ra}=\bigoplus_{\lambda\neq \la 3 \ra} Y^\lambda$ is a tilting $\Lambda_0$-module. Moreover,
				the complement of the direct sum of copies of $Y^{\la 3 \ra}$ in $DV^{\otimes 6}$ is a tilting module.
			\end{enumerate} 
	\end{Cor}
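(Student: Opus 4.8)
The plan is to run the proofs of Theorem~\ref{maintheoremone}, Lemma~\ref{lema4dot2dot1} and Theorem~\ref{tiltingtheorem} essentially verbatim at $p=3$, with Lemma~\ref{4dot3dot1} playing the role that Lemmas~\ref{lemma4dot2} and~\ref{lemma4dot3} played when $p\geq 5$. The point is that at $p=3$ the $\Ext$-computation needed to pin down the relevant relative (co)dominant dimension is already contained in Lemma~\ref{4dot3dot1}: there $\Ext_\Lambda^i(Y^{\la 3\ra},{}^\natural S^{\la 2,1\ra})=0$ for $1\leq i\leq 2$ and is nonzero for $i=3$, while $\Ext_\Lambda^i(Y^\lambda,{}^\natural S^{\la 2,1\ra})=0$ for all $i\geq 1$ when $\lambda\neq\la 3\ra$; this matches the numerology $2(p-1)-2=2$, $2(p-1)-1=3$ and $4(p-1)=8$, so no analogue of the long coresolution~(\ref{eq3}) is needed here.

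For~(a): let $T$, resp.\ $T_0$, be the characteristic tilting module of $S_k(3,6)$, resp.\ of its principal block $A_0$, so that $\add T=\add(V^{\otimes 6}\oplus T(2^3))$ and $\add T_0=\add(Q\oplus T(2^3))$, with $T(2^3)$ in $A_0$. By Lemma~\ref{4dot1dot1}, $D\Hom_{A_0}(Q,T(2^3))\cong DS^{\la 2,1\ra}$; since ${}_{\Lambda_0}DQ$ is a sum of Young modules and hence fixed by the simple-preserving duality, the chain of isomorphisms from the proof of Theorem~\ref{maintheoremone} together with Lemma~\ref{lemma2dot1dot2} identifies $D\Tor_i^\Lambda(Q_\Lambda,\Hom_{A_0}(Q,T(2^3)))$ with $\Ext_\Lambda^i(D(Q_\Lambda),{}^\natural S^{\la 2,1\ra})$, which by Lemma~\ref{4dot3dot1} vanishes for $1\leq i\leq 2$ and is nonzero for $i=3$. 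By the relative Mueller theorem~\citep[Theorem~3.1.4]{Cr2} together with Lemma~\ref{lemma2dot1dot2}, $Q\codomdim_{A_0}T(2^3)=4$; since $Q\codomdim_{A_0}Q=\infty$ this forces $Q\codomdim_{A_0}T_0=4$, whence $Q\domdim_{A_0}A_0=2\cdot 4=8$ by Theorem~3.1 of~\cite{CE24}. Finally, the $A_1$-summand of $V^{\otimes 6}$ is a characteristic tilting module (contributing $\infty$), so~\citep[Corollary~3.1.9]{Cr2} gives $V^{\otimes 6}\domdim_{S_k(3,6)}S_k(3,6)=Q\domdim_{A_0}A_0=8$; the ``in particular'' statement then follows from~\citep[Theorem~3.1.4(ii)]{Cr2} and Lemma~\ref{lemma2dot1dot2}, exactly as in Theorem~\ref{maintheoremone}.

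For~(b) I would copy the proof of Lemma~\ref{lema4dot2dot1}: a non-projective $Y^\lambda$ with $\lambda\neq\la 3\ra$ has projective dimension $1=p-2$ (see the list reproducing Proposition~\ref{5dotonedotone} in Subsection~\ref{sec4dot3}), so $\Ext_\Lambda^i(Y^\lambda,Y^\mu)=0$ for $i\geq 2$, while for $i=1$ one has $\Ext_\Lambda^1(Y^\lambda,Y^\mu)\subseteq\Ext_\Lambda^1(Q,Q)=0$ by~(a). For~(c): $\bigoplus_{\lambda\neq\la 3\ra}Y^\lambda$ is self-orthogonal by~(b), has projective dimension $1$, and admits a coresolution of length at most $2$ of the regular module $\Lambda_0$ by $\add\big(\bigoplus_{\lambda\neq\la 3\ra}Y^\lambda\big)$ --- indeed the projective-injective indecomposables $P^{\la 3\ra}=Y^{\la 2,1\ra}$ and $P^{\la 2\ra}=Y^{\la 3,1\ra}$ already lie in this additive subcategory, while the three remaining (non-injective) projectives fit into the sequences~(\ref{equation18}),~(\ref{equation19}),~(\ref{equation20}), whose middle and right-hand terms are Young modules distinct from $Y^{\la 3\ra}$, and summing these gives the required coresolution. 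Hence $Q/Y^{\la 3\ra}$ is a tilting $\Lambda_0$-module; and, as in the proof of Theorem~\ref{tiltingtheorem}, the summands of $V^{\otimes 6}$ in the non-principal blocks of $\Lambda$ are full tilting modules (by~\citep[4.6(3)]{E1}, those blocks being Ringel duals of the corresponding blocks of $S_k(3,6)$), so $V^{\otimes 6}/(Y^{\la 3\ra})^t$ is a tilting $\Lambda$-module block by block.

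There is no genuine obstacle once Lemma~\ref{4dot3dot1} is in place; the only points demanding attention are the degree bookkeeping in~(a) --- one must check that the vanishing range and first nonvanishing degree at $p=3$ are precisely those needed for $Q\codomdim_{A_0}T(2^3)=2(p-1)$ --- and confirming that Lemma~\ref{4dot1dot1} and the descriptions of the projective and Young $\Lambda_0$-modules carry over unchanged at $p=3$, where $S^{\la 2,1\ra}$ has socle $D^{\la 3\ra}$ rather than $D^{\la 4,3\ra}$, as recorded at the start of Subsection~\ref{sec4dot3}.
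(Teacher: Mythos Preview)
Your proposal is correct and follows essentially the same approach as the paper, which simply states that the arguments of Theorem~\ref{maintheoremone}, Lemma~\ref{lema4dot2dot1} and Theorem~\ref{tiltingtheorem} go through verbatim once Lemma~\ref{4dot3dot1}(a) replaces Lemma~\ref{lemma4dot2} and Lemma~\ref{4dot3dot1}(b) replaces Lemma~\ref{lemma4dot3}. You have spelled out the details the paper leaves implicit; the only small omission is that in~(a) you should, as in the proof of Theorem~\ref{maintheoremone}, first invoke \citep[Theorem~8.1.2]{Cr2} to secure $Q\codomdim_{A_0}T_0\geq 2$ before applying \citep[Theorem~3.1.4]{Cr2}, since the latter presupposes this lower bound.
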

\begin{proof}
	The exact same arguments employed in Theorem \ref{maintheoremone}, Theorem \ref{tiltingtheorem} and Lemma \ref{lema4dot2dot1} hold now for characteristic three by replacing the use of Lemma \ref{lemma4dot2} by Lemma \ref{4dot3dot1}(a) and Lemma \ref{lemma4dot3} by Lemma \ref{4dot3dot1}(b).
\end{proof}

\subsection{The relative dominant dimension over arbitrary fields}

As illustrated in \cite{Cr2}, relative dominant dimension is a homological invariant that is preserved under base change to an algebraically closed field. Recall that both the Schur algebra and the tensor power admit base change properties:
$\overline{k}\otimes_k (k^n)^{\otimes d}\cong \left( \overline{k}^n\right) ^{\otimes d}$ and $\overline{k}\otimes_k S_k(n, d)\cong S_{\overline{k}}(n, d)$
for every field $k$, where $\overline{k}$ denotes the algebraic closure of $k$ (see for example \citep[2.5]{Gr}.)

Thus, the results of this section on relative dominant dimension can be summarised as follows:

\begin{Theorem} \label{maintheoremarbitraryfield}
	Let $k$ be an arbitrary field with positive characteristic $p$.  Then $$V^{\otimes 2p}\domdim_{S_k(p, 2p)} S_k(p, 2p)=4(p-1).$$
\end{Theorem}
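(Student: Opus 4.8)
The plan is to reduce the statement over an arbitrary field to the algebraically closed case, which is already settled: Theorem~\ref{maintheoremone} gives the value $4(p-1)$ when $p\geq 5$ and $k$ is algebraically closed, Corollary~\ref{4dot3dot2}(a) gives it when $p=3$ and $k$ is algebraically closed, and the case $p=2$ is contained in \cite{CE24}. So it suffices to show that $V^{\otimes 2p}\domdim_{S_k(p,2p)}S_k(p,2p)$ is unchanged when $k$ is replaced by its algebraic closure $\overline{k}$.

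First I would record the base change isomorphisms $\overline{k}\otimes_k V^{\otimes 2p}\cong(\overline{k}^{p})^{\otimes 2p}$ and $\overline{k}\otimes_k S_k(p,2p)\cong S_{\overline{k}}(p,2p)$ (see \citep[2.5]{Gr}), together with the compatibility of the quasi-hereditary structure with base change, so that the characteristic tilting module of $S_{\overline{k}}(p,2p)$ is $\overline{k}\otimes_k-$ applied to that of $S_k(p,2p)$. The functor $\overline{k}\otimes_k-\colon S_k(p,2p)\m\to S_{\overline{k}}(p,2p)\m$ is exact, carries $\add V^{\otimes 2p}$ into $\add(\overline{k}^{p})^{\otimes 2p}$ and the regular module to the regular module, and satisfies $\overline{k}\otimes_k\Hom_{S_k(p,2p)}(M,N)\cong\Hom_{S_{\overline{k}}(p,2p)}(\overline{k}\otimes_k M,\overline{k}\otimes_k N)$ for finitely generated $M,N$. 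Applying $\overline{k}\otimes_k-$ to an exact sequence $0\to S_k(p,2p)\to Q_1\to\cdots\to Q_n$ with $Q_i\in\add V^{\otimes 2p}$ which remains exact under $\Hom_{S_k(p,2p)}(-,V^{\otimes 2p})$ yields a sequence of the same type over $S_{\overline{k}}(p,2p)$, hence $V^{\otimes 2p}\domdim_{S_k(p,2p)}S_k(p,2p)\leq(\overline{k}^{p})^{\otimes 2p}\domdim_{S_{\overline{k}}(p,2p)}S_{\overline{k}}(p,2p)$. For the reverse inequality I would use the relative Mueller-type theorem \citep[Theorem 3.1.4]{Cr2} together with Lemma~\ref{lemma2dot1dot2}: the relative dominant dimension in question is controlled by the first degree in which an $\Ext$-group (equivalently, a $\Tor$-group) between the tensor space and the characteristic tilting module is non-zero, and since $\overline{k}$ is flat over $k$ one has $\overline{k}\otimes_k\Ext^i_{S_k(p,2p)}(M,N)\cong\Ext^i_{S_{\overline{k}}(p,2p)}(\overline{k}\otimes_k M,\overline{k}\otimes_k N)$ for finitely generated $M,N$, so that first degree is unchanged. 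Combining the two inequalities gives $V^{\otimes 2p}\domdim_{S_k(p,2p)}S_k(p,2p)=(\overline{k}^{p})^{\otimes 2p}\domdim_{S_{\overline{k}}(p,2p)}S_{\overline{k}}(p,2p)=4(p-1)$.

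The main obstacle is making precise the base change invariance of relative dominant dimension. The ascent inequality is essentially formal, using only flatness of $\overline{k}$ over $k$ and the compatibility of $\Hom$ with base change; the point requiring care is that a witnessing relative $\add V^{\otimes 2p}$-coresolution over $\overline{k}$ need not visibly descend to $k$, which is why I would invoke the characterisation via vanishing of relative $\Ext$- (or $\Tor$-) groups from \cite{Cr2} and the flat base change isomorphism for these groups, rather than arguing with coresolutions directly. Everything else — the split by characteristic, the identification $\add(\overline{k}^{p})^{\otimes 2p}=\add(\overline{k}\otimes_k V^{\otimes 2p})$, the exactness of $\overline{k}\otimes_k-$, and the compatibility of the characteristic tilting module with base change — is routine.
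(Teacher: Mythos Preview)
Your proposal is correct and follows essentially the same route as the paper: reduce to the algebraically closed case (handled by Theorem~\ref{maintheoremone}, Corollary~\ref{4dot3dot2}, and \cite{CE24}) and then argue that the relative dominant dimension is invariant under the field extension $k\hookrightarrow\overline{k}$. The only difference is one of packaging: the paper invokes \citep[Lemma~3.2.3]{Cr2} directly for this base change invariance, while you sketch a proof of that lemma by combining the $\Ext$/$\Tor$ characterisation from \citep[Theorem~3.1.4]{Cr2} with flat base change for these groups.
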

\begin{proof}
	For $p=2$, this is contained in \citep[Theorem 5.8]{CE24}. For $p=3$, the result follows by  \citep[Lemma 3.2.3]{Cr2} and Corollary \ref{4dot3dot2}. For $p>3$, the result follows by \citep[Lemma 3.2.3]{Cr2} and Theorem \ref{maintheoremone}.
\end{proof}

		\section{Homological properties of $S_k(p, 2p)$ and $\Lambda_k(p, 2p)$}\label{homologicalpropertiesofschuralgebras}
	
	In this section, our goal is to better understand the homological structure of Schur algebras of the form $S_k(p, 2p)$. More precisely, we will determine (over fields of characteristic $p$)
	\begin{itemize}
		\item the Hemmer-Nakano dimension of $\mathcal{F}(\St_{R_k(p, 2p)})$ of the quasi-hereditary cover of $\Lambda_k(p, 2p)$, the centraliser of ${}_{S_k(p, 2p)} V^{\otimes 2p}$, which is formed by the Ringel dual of $S_k(p, 2p)$;
		\item a quasi-precluster tilting module of $\Lambda_k(p, 2p)$ in the sense of \citep{CP2}.
	\end{itemize}
	The latter is done by showing that the Schur algebra $S_k(p, 2p)$ fits into a non-trivial relative Auslander pair. In particular, we find a new way to determine the global dimension of $S_k(p, 2p)$.

	\subsection{The quasi-hereditary cover of $\Lambda_k(p, 2p)$} Recall the notation introduced in Subsection \ref{sec2dot3}.
	As constructed in \cite{Cr2} and recalled in Theorem \ref{thm2dot2dot1} the pair $$(R_k(p, 2p), \Hom_{S_k(p, 2p)}(T, V^{\otimes 2p}))$$ is a quasi-hereditary cover of $\Lambda_k(p, 2p)$. Associated with this cover comes the Schur functor $$F_{p, 2p}=\Hom_{R_k(p, 2p)}(\Hom_{S_k(p, 2p)}(T, V^{\otimes 2p}), -)\colon R_k(p, 2p)\m\rightarrow \Lambda_k(p, 2p)\m.$$
	
	 But, up until now, only a lower bound for the  Hemmer-Nakano dimension of $\mathcal{F}(\St_{R_k(p, 2p)})$  was known (see Theorem \ref{thm2dot2dot1}). 
	We apply now our computation of the relative dominant dimension $V^{\otimes 2p}\domdim S_k(p, 2p)$ to this cover.

	\begin{Theorem}\label{quasihereditarycover}
	Let $k$ be a field of characteristic $p>0$. Let $R_k(p, 2p)$ be the Ringel dual of $S_k(p, 2p)$. 
	Then $(R_k(p, 2p), \Hom_{S_k(p, 2p)}(T, V^{\otimes 2p}))$ is a $2(p-2)$-faithful quasi-hereditary cover of $\Lambda_k(p, 2p)$ in the sense of \cite{Rou}.
	That is, the Schur functor $F_{p, 2p}$ induces isomorphisms $$\Ext_{R_k(p, 2p)}^i(M, N)\cong \Ext_{ \Lambda_k(p, 2p)}^i(F_{p, 2p}M, F_{p, 2p}N)$$ for every $M, N\in \mathcal{F}(\St_{R_k(p, 2p)})$ and $0\leq i \leq 2(p-2)$; the isomorphism fails in general for $i=2(p-2)+1.$
\end{Theorem}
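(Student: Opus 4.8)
The plan is to read off the isomorphism range from the relative theory of dominant dimension, using the value of $V^{\otimes 2p}\domdim_{S_k(p, 2p)} S_k(p, 2p)$ computed in Section \ref{Cohomological properties of Young and Specht modules}, and then to produce an explicit pair of modules in $\mathcal{F}(\St_{R_k(n, d)})$ witnessing the failure in degree $2(p-2)+1$. For $p=2$ the algebra $\Lambda_k(2, 4)$ is the Temperley--Lieb algebra $TL_{k, 4}(0)$ and the assertion is contained in \cite{CE24}, so assume $p\geq 3$; the characteristic-three modifications are indicated at the end.

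\emph{The isomorphisms for $0\leq i\leq 2(p-2)$.} Recall that $F_{p, 2p}=\Hom_{R_k(n, d)}(\Hom_{S_k(p, 2p)}(T, V^{\otimes 2p}), -)$, that $\Hom_{S_k(p, 2p)}(T, V^{\otimes 2p})$ is a projective $R_k(n, d)$-module with endomorphism algebra $\Lambda_k(p, 2p)$, and that $F_{p, 2p}\St_{R_k(n, d)}(\l)\cong S^\l$ and $F_{p, 2p}T_{R_k(n, d)}(\l)\cong Y^\l$ by Equations (\ref{eqone}) and (\ref{eqtwo}). By the relative theory of dominant dimension (\citep[Theorems 8.1.2, 8.1.3]{Cr2}; equivalently, transport Theorem \ref{thm2dot2dot1}(c) in its sharp form along the Ringel-duality equivalence $\Hom_{S_k(p, 2p)}(T, -)\colon \mathcal{F}(\Cs)\xrightarrow{\sim}\mathcal{F}(\St_{R_k(n, d)})$, under which $F_{p, 2p}$ restricts to $L=\Hom_{S_k(p, 2p)}(V^{\otimes 2p}, -)$), the functor $F_{p, 2p}$ induces the asserted isomorphisms for all $M, N\in \mathcal{F}(\St_{R_k(n, d)})$ and all $i$ with $0\leq i\leq V^{\otimes 2p}\codomdim_{S_k(p, 2p)} T - 2$, where $T$ is the characteristic tilting module of $S_k(p, 2p)$ (the bound $p-3$ of Theorem \ref{thm2dot2dot1}(b) being just the generic lower estimate for this quantity). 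Now $V^{\otimes 2p}\domdim_{S_k(p, 2p)} S_k(p, 2p)=4(p-1)$ by Theorem \ref{maintheoremarbitraryfield}, so $V^{\otimes 2p}\codomdim_{S_k(p, 2p)} T=2(p-1)$ by \citep[Theorem 3.1]{CE24}; hence the isomorphisms hold for $0\leq i\leq 2(p-2)$.

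\emph{The failure in degree $2(p-2)+1$.} Put $M=\St_{R_k(n, d)}(\la 2, 1\ra)$ and $N=T_{R_k(n, d)}(\la p\ra)$; both lie in $\mathcal{F}(\St_{R_k(n, d)})$, and $N$, being a partial tilting module, lies in $\mathcal{F}(\Cs_{R_k(n, d)})$ as well. As $\Ext_{R_k(n, d)}^i(\mathcal{F}(\St_{R_k(n, d)}), \mathcal{F}(\Cs_{R_k(n, d)}))=0$ for all $i\geq 1$, we obtain $\Ext_{R_k(n, d)}^{2(p-2)+1}(M, N)=0$. On the quotient side, $F_{p, 2p}M\cong S^{\la 2, 1\ra}$ and $F_{p, 2p}N\cong Y^{\la p\ra}$, and, writing $X={}^\natural S^{\la 2, 1\ra}$ and using that ${}^\natural(-)$ is a simple preserving duality on $\Lambda_k(p, 2p)\m$ with ${}^\natural Y^{\la p\ra}\cong Y^{\la p\ra}$ and ${}^\natural X\cong S^{\la 2, 1\ra}$, Lemma \ref{lemma4dot3} (in which $t=2(p-1)$, so $t-1=2(p-2)+1$) gives
\[\Ext_{\Lambda_k(p, 2p)}^{2(p-2)+1}(S^{\la 2, 1\ra}, Y^{\la p\ra})\cong \Ext_{\Lambda_k(p, 2p)}^{2(p-2)+1}(Y^{\la p\ra}, X)\neq 0.\]
Therefore the comparison morphism $\Ext_{R_k(n, d)}^{2(p-2)+1}(M, N)\to \Ext_{\Lambda_k(p, 2p)}^{2(p-2)+1}(F_{p, 2p}M, F_{p, 2p}N)$ goes from a zero space to a nonzero one, so it is not an isomorphism; combined with the previous paragraph, the Hemmer--Nakano dimension of $\mathcal{F}(\St_{R_k(n, d)})$ is exactly $2(p-2)$. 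For $p=3$ the argument is the same, using Corollary \ref{4dot3dot2}(a) in place of Theorem \ref{maintheoremarbitraryfield} and Lemma \ref{4dot3dot1}(b) in place of Lemma \ref{lemma4dot3} (there $2(p-2)+1=3$).

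\emph{Main obstacle.} The substantive input has already been established: once $V^{\otimes 2p}\domdim_{S_k(p, 2p)} S_k(p, 2p)=4(p-1)$ is known, both the isomorphism range and its sharpness are essentially formal. The delicate point is matching the quotient-side $\Ext$-computations of Sections \ref{SpechtandYoungmodulesoverLambda}--\ref{Cohomological properties of Young and Specht modules} — carried out over $\Lambda_k(p, 2p)$ with the various left/right conventions and the two simple preserving dualities — to the precise input demanded by \citep[Theorems 8.1.2, 8.1.3]{Cr2} and \citep[Theorem 3.1]{CE24}, and ensuring the characteristic-three substitutes are applied in the correct places.
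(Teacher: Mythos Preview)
Your proof is correct and follows the same route as the paper: deduce both the isomorphism range and its sharpness from the exact value $V^{\otimes 2p}\domdim_{S_k(p, 2p)} S_k(p, 2p)=4(p-1)$ of Theorem \ref{maintheoremarbitraryfield} via \citep[Theorem 5.3.1]{Cr2} and \citep[Theorem 3.1]{CE24}. You additionally supply an explicit witnessing pair $(M,N)=(\St_{R_k(n,d)}(\la 2,1\ra),\,T_{R_k(n,d)}(\la p\ra))$ for the failure in degree $2(p-2)+1$, which the paper leaves implicit in those citations; note, however, that Lemmas \ref{lemma4dot3} and \ref{4dot3dot1} are stated only over algebraically closed $k$, so for arbitrary fields your witness needs a base-change remark (or you can simply fall back on the formal argument you yourself flag in the final paragraph), and Theorem \ref{maintheoremarbitraryfield} already covers $p=3$, making the separate substitution of Corollary \ref{4dot3dot2}(a) redundant.
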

\begin{proof}
	For $p=2$, this is contained in  \citep[Corollary 6.8]{CE24}. Assume now that $p>2$. Then, the result follows from Theorem \ref{maintheoremarbitraryfield}, \citep[Theorem 5.3.1]{Cr2} and \citep[Theorem 3.1]{CE24}.
\end{proof}

This means that the Hemmer-Nakano dimension of $\mathcal{F}(\St_{R_k(p, 2p)})$ (with respect to $F_{p, 2p}$) is exactly $2(p-2)$.

As an application of this fact, we obtain the following.

	\begin{Cor}
	The Young  $\Lambda_0$-module $Y^{\la p \ra}$ has infinite projective dimension. \label{simpleYoungmoduledimension}
\end{Cor}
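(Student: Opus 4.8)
The plan is to derive the infinite projective dimension of $Y^{\la p\ra}$ as the trivial $\Lambda_0$-module from the two extreme computations already available: on the one hand, the relative dominant dimension computation (Theorem \ref{maintheoremone} / Theorem \ref{maintheoremarbitraryfield}), which shows the self-orthogonality of $Q$ up to degree $4(p-1)-2$, and on the other hand the \emph{sharpness} statements, namely $\Ext_\Lambda^{2(p-1)-1}(Y^{\la p\ra}, X)\neq 0$ from Lemma \ref{lemma4dot3} (and its characteristic-three analogue Lemma \ref{4dot3dot1}(b)), together with the failure of the Hemmer-Nakano isomorphism at degree $2(p-2)+1$ in Theorem \ref{quasihereditarycover}.

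First I would argue by contradiction: suppose $\pdim_{\Lambda_0} Y^{\la p\ra}=:m<\infty$. Since every non-projective Young $\Lambda_0$-module other than $Y^{\la p\ra}$ has projective dimension exactly $p-2$ by Proposition \ref{5dotonedotone}, and the projective $\Lambda_0$-modules appearing as middle terms there are projective-injective, finiteness of $\pdim Y^{\la p\ra}$ would make $\Lambda_0$ an algebra of finite global dimension. But $\Lambda_0$ is cellular with cell modules the Specht modules, and the number of Specht modules exceeds the number of simple modules by one; by the cited result \cite{zbMATH01384521} a cellular algebra of finite global dimension is quasi-hereditary, which forces the two counts to coincide — contradiction. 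This is exactly the mechanism already used in the proof of Theorem \ref{thm3:3:2} to show $\gldim\Lambda_0=\infty$, and the present corollary is really the module-theoretic refinement: the \emph{only} obstruction to finite global dimension is $Y^{\la p\ra}$, since all other non-projective Young modules (hence, via Proposition \ref{5dotonedotone}, all non-injective projectives, and dually all non-projective injectives) have finite projective dimension.

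Alternatively, and more in the spirit of the preceding section, one can give the quantitative version directly: $\Ext_\Lambda^{2(p-1)-1}(Y^{\la p\ra}, {}^\natural S^{\la 2,1\ra})\neq 0$ by Lemma \ref{lemma4dot3}, so $\pdim_{\Lambda_0} Y^{\la p\ra}\geq 2(p-1)-1=2p-3$. Since $\findim\Lambda_0=2p-4$ by Corollary \ref{finitisticlamdbazero}, a projective dimension of at least $2p-3$ cannot be finite; hence $\pdim_{\Lambda_0}Y^{\la p\ra}=\infty$. For $p=3$ the same argument runs with Lemma \ref{4dot3dot1}(b) in place of Lemma \ref{lemma4dot3}: there $\Ext_\Lambda^3(Y^{\la 3\ra}, X)\neq 0$ while $\findim\Lambda_0=2p-4=2$, again forcing infinite projective dimension. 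I would present this second route as the main proof and mention the cellular-algebra argument as a remark, since it also re-proves $\gldim\Lambda_0=\infty$.

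I do not anticipate a genuine obstacle here — all the heavy lifting (the coresolution \eqref{eq3}, the vanishing Lemmas \ref{lemma4dot2}, \ref{lemma4dotonedot3}, and the non-vanishing Lemma \ref{lemma4dot3}) is already in place. The one point that needs a line of care is bookkeeping between $\Lambda_0$ and $\Lambda=\Lambda_k(p,2p)$: since $\Lambda_0$ is a block of $\Lambda$, projective dimension over $\Lambda_0$ agrees with projective dimension over $\Lambda$ for $\Lambda_0$-modules, and $\Ext^i$ computed in either coincides, so the non-vanishing of $\Ext_\Lambda^{2p-3}(Y^{\la p\ra}, X)$ does give $\pdim_{\Lambda_0}Y^{\la p\ra}\geq 2p-3$. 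The other small point is simply invoking Corollary \ref{finitisticlamdbazero} with the correct value $2p-4<2p-3$; this strict inequality is what closes the argument.
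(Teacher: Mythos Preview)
Your second argument --- the one you present as the main proof --- is correct and is more direct than the paper's own. You combine the explicit non-vanishing $\Ext_\Lambda^{2p-3}(Y^{\la p\ra}, X)\neq 0$ from Lemma \ref{lemma4dot3} (respectively Lemma \ref{4dot3dot1}(b) for $p=3$) with $\findim\Lambda_0 = 2p-4$ from Corollary \ref{finitisticlamdbazero} to force infinite projective dimension. The paper instead passes to the Schur-algebra side: assuming $\pdim_{\Lambda_0} Y^{\la p\ra}<\infty$, it uses the finitistic bound together with the cover isomorphisms of Theorem \ref{quasihereditarycover} to lift a finite projective $\Lambda_0$-resolution of each $Y^\lambda$ to a finite $\add Q$-resolution of $I(\lambda)$ in $A_0\m$, concluding $Q\codomdim_{A_0} I(\lambda)=\infty$ for every $\lambda$ and hence $Q\domdim_{A_0} A_0=\infty$, in contradiction with Theorem \ref{maintheoremone}. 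Your route stays entirely on the $\Lambda_0$ side and is shorter; the paper's route ties the statement back into the relative-dominant-dimension framework that drives the rest of the section.

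Your first argument, however, has a gap as written. The assertion that ``finiteness of $\pdim Y^{\la p\ra}$ would make $\Lambda_0$ an algebra of finite global dimension'' is not justified: the Young modules are neither the simple $\Lambda_0$-modules nor the indecomposable injective $\Lambda_0$-modules, so knowing that every Young module has finite projective dimension does not by itself bound $\gldim\Lambda_0$. (Note that $\pdim_{\Lambda_0} D\Lambda_0<\infty$ is precisely the Iwanaga--Gorenstein property already established in Theorem \ref{thm3:3:2}, and that does not prevent infinite global dimension.) One can complete this line --- e.g.\ by arguing that under the hypothesis $DQ$ becomes a full tilting $\Lambda_0$-module whose endomorphism ring is $A_0$, forcing a derived equivalence to an algebra of finite global dimension --- but that requires self-orthogonality of $DQ$ in all degrees and is not the one-line step you suggest. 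Since you relegate this argument to a remark and your main argument stands on its own, this does not affect the validity of your proof.
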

\begin{proof}
	Consider $\lambda$ so that $Y^\lambda$ has finite projective dimension. By Corollary \ref{finitisticlamdbazero}, \linebreak\mbox{$\pdim_{\Lambda_0} Y^\lambda\leq 2p-4$.} In particular, the projective dimension of $Y^\lambda$ is the maximum natural number $n$ satisfying $\Ext^n_{\Lambda_0}(Y^\lambda, \Lambda_0)\neq0$. By Theorem \ref{quasihereditarycover}, $\Ext^i_{\Lambda_0}(Y^\lambda, \Lambda_0)\cong \Ext^i_{A_0}(I(\lambda), Q)$ for every $i=0, \ldots, 2p-4$ and this isomorphism is induced by the functor $\Hom_{A_0}(Q, -)$. Moreover, by projectivisation, $\Hom_{A_0}(Q, -)$ identifies the modules in $\add Q$ with the projective  $\Lambda_0$-modules.  Thus, if $\pdim_{\Lambda_0} Y^\lambda=i\leq 2p-4$ then there exists an $\add Q_\Lambda$-resolution of $I(\lambda)$ of length $i$. Since $Q$ is self-orthogonal, this means that $Q\codomdim_{A_0} I(\lambda)=+\infty$.  By the proof of Proposition \ref{5dotonedotone}, all Young modules $Y^\lambda$ with $\lambda\neq \la p\ra$ have finite projective dimension. So, if $Y^{\la p\ra}$ has also finite projective dimension, then we would have $Q\codomdim_{A_0} I(\lambda)=+\infty$ for every partition $\lambda$ in $\Lambda_0$. But then, $Q\domdim_{A_0} A_0=Q\codomdim_{A_0} DA_0=+\infty$ which would contradict Theorem \ref{maintheoremone}. Thus, $Y^{\la p\ra}$ has infinite projective dimension.
\end{proof}

Another application of Theorem \ref{quasihereditarycover} is the cellularity of the endomorphism algebra of the tilting module built in the previous section.

	\begin{Prop}Assume that $p>2$.
	The algebra $\displaystyle E_p=\End_{\Lambda_k(p, 2p)}(\bigoplus_{\lambda\neq \langle p\rangle } Y^{\lambda})$ is cellular with infinite global dimension.
\end{Prop}
\begin{proof}
	By Theorem \ref{quasihereditarycover}, $$\End_{\Lambda_k(p, 2p)}(\oplus_{\lambda\neq \langle p\rangle } Y^{\lambda})\cong \End_{\Lambda_k(p, 2p)}(\oplus_{\lambda\neq \langle p\rangle } F_{p, 2p}T_R(\lambda))\cong \End_{R_k(p, 2p)}(\oplus_{\lambda\neq \langle p\rangle } T_R(\lambda)).$$
	Thus, by \citep[Theorem 7.5]{appendix} (see also \cite{zbMATH06790119} and \cite{zbMATH07537690}), $E_p$ is a cellular algebra. Since $\oplus_{\lambda\neq \langle p\rangle } Y^{\lambda}$ is a tilting module, the algebras $E_p$ and $\Lambda_k(p, 2p)$ are derived equivalent. Since $\Lambda_k(p, 2p)$ has infinite global dimension, it follows that $E_p$ has infinite global dimension.
\end{proof}

	\subsection{A quasi-precluster tilting module over $\Lambda_0$} \label{quasipreclustertiltingmodule}
	
	In this subsection, we present the underlying reason and the phenomenon that explains why the relative dominant dimension \linebreak $V^{\otimes 2p}\domdim_{S_k(p, 2p)} S_k(p, 2p)$ is precisely $4(p-1)$ and what does this mean for both the structure of $\Lambda_0$ and the principal block of $S_k(p, 2p)$.
	 As we will show in the coming results, these algebras fit in the setup presented in \cite{CP1} and \cite{CP2}.
	
	To see that it fits in the setup presented in \cite{CP1} it is enough to combine the work of the previous section with  \cite{zbMATH02005565} 
	
	\begin{Theorem}
		Let $k$ be an algebraically closed field of characteristic $p\geq 3$. Then,
 $(S_k(p, 2p), (k^p)^{\otimes 2p})$ is a relative $4(p-1)$-Auslander pair. 
	\end{Theorem}
\begin{proof}
	By Theorem \ref{maintheoremarbitraryfield} and \citep[Theorem 5.9]{zbMATH02005565}, $$V^{\otimes 2p}\domdim_{S_k(p, 2p)} S_k(p, 2p)=4(p-1)=\gldim S_k(p, 2p).$$
\end{proof}

Analogously, this argument can be applied to the principal block of $S_k(p, 2p)$. However, the previous result, by itself, does not give everything that we want to know about the relative Auslander pair, in particular it does not give the value of the projective dimension of $V^{\otimes 2p}$. To address this, we make use of the resolutions built in Subsection \ref{coresolvingsingularSpechtmodule} to give a new approach to compute the global dimension of the principal block of $S_k(p, 2p)$ and mainly to show that these algebras fit in the setup presented in \cite{CP2}.
	
		\begin{Theorem} \label{maintheorempair}Let $k$ be an algebraically closed field of characteristic $p\geq 3$.
		Let $A_0$ be the principal block of $S_k(p, 2p)$ and $Q$ the direct summand of $V^{\otimes 2p}$ corresponding to the principal block. Then $(A_0, Q)$ is a relative $4(p-1)$-Auslander pair and $\pdim_{A_0} Q=p-2$. In particular, $\gldim A_0=4(p-1)$.
	\end{Theorem}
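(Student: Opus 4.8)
The strategy is to verify the two inequalities in the definition of a relative $n$-Auslander pair for $n=4(p-1)$, namely $\gldim A_0\le 4(p-1)\le Q\domdim_{A_0}A_0$. The right-hand inequality is already available: the equality $Q\domdim_{A_0}A_0=4(p-1)$ is obtained inside the proof of Theorem \ref{maintheoremone} (for $p\ge 5$) and of Corollary \ref{4dot3dot2} (for $p=3$), equivalently from the algebraically closed case of Theorem \ref{maintheoremarbitraryfield}. So the real content is the computation of the global dimension, together with the matching lower bound needed for the ``in particular'' clause.

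First I would record the elementary facts about $Q$ as an $A_0$-module. Since $A_0$ is quasi-hereditary with characteristic tilting module $T_0=Q\oplus T(2^p)$, the summand $Q$ is self-orthogonal, and, being a direct sum of indecomposable partial tilting modules, it is fixed by the simple-preserving duality of $A_0$; hence $\pdim_{A_0}Q=\injdim_{A_0}Q$. To pin this common value down to $p-2$ I would transport the question to $\Lambda_0$ via $\Hom_{A_0}(Q,-)$, which is fully faithful on $\mathcal{F}(\Cs_{A_0})$ and, by Theorems \ref{thm2dot2dot1} and \ref{quasihereditarycover}, induces isomorphisms $\Ext^i_{A_0}(I(\lambda),Q)\cong\Ext^i_{\Lambda_0}(Y^\lambda,\Lambda_0)$ for $0\le i\le 2(p-2)$ (this is the mechanism already used in the proof of Corollary \ref{simpleYoungmoduledimension}). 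Combining this with $\pdim_{\Lambda_0}Y^\lambda=p-2$ for $\lambda\ne\la p\ra$ (Proposition \ref{5dotonedotone}) and with the bound $\findim\Lambda_0=2p-4$ (Corollary \ref{finitisticlamdbazero}) forces $\pdim_{A_0}Q=p-2$.

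The core step is then the determination of $\gldim A_0$, for which I would invoke \citep[Lemma 3.8]{CP2}. Its hypotheses hold for $(A_0,Q)$: the algebra $A_0$ is quasi-hereditary, $Q$ is a self-orthogonal summand of $T_0$, and the remaining tilting summand $T(2^p)\cong\Cs(2^p)$ has finite relative $\add Q$-resolution data --- concretely $Q\codomdim_{A_0}T(2^p)=2(p-1)$, established in the proof of Theorem \ref{maintheoremone} from the coresolution of $S^{\la 2, 1\ra}=\Hom_{A_0}(Q,T(2^p))$ by Young modules constructed in Section \ref{coresolvingsingularSpechtmodule}. Feeding $\pdim_{A_0}Q=p-2$ and $Q\codomdim_{A_0}T(2^p)=2(p-1)$ into Lemma 3.8 should give $\gldim A_0=Q\domdim_{A_0}A_0=4(p-1)$. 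With this in hand, $\gldim A_0=4(p-1)=Q\domdim_{A_0}A_0$ is precisely the statement that $(A_0,Q)$ is a relative $4(p-1)$-Auslander pair.

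I expect the principal difficulty to lie in two places. One is making $\pdim_{A_0}Q=p-2$ watertight: the comparison functor only controls cohomology up to degree $2(p-2)$, so one must use self-duality of $Q$ and the finitistic dimension of $\Lambda_0$ to rule out higher projective dimension rather than read it off directly. The other is checking that the precise hypotheses of \citep[Lemma 3.8]{CP2} are met --- in particular that the ``extremal'' homological behaviour of $A_0$ is genuinely concentrated on the costandard module $\Cs(2^p)$ and is faithfully recorded by the Section \ref{coresolvingsingularSpechtmodule} coresolution --- so that the lemma outputs the clean equality $\gldim A_0=Q\domdim_{A_0}A_0$ rather than merely an inequality.
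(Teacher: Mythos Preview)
Your overall structure matches the paper's: both verify $\gldim A_0\le 4(p-1)\le Q\domdim_{A_0}A_0$, take the right-hand inequality from Theorem~\ref{maintheoremone} and Corollary~\ref{4dot3dot2}, and note that finiteness of $Q\domdim_{A_0}A_0$ forces $\gldim A_0\ge 4(p-1)$. Where you diverge is in the upper bound for $\gldim A_0$. The paper does not use \citep[Lemma 3.8]{CP2}; for $p\ge 5$ it invokes \citep[Lemma 2.8(i)]{CP2} to identify $\injdim_{A_0}T_0$ with $\dim_{\add Q_\Lambda}D\Hom_{A_0}(Q,T_0)=\dim_{\add Q_\Lambda}(D\Lambda_0\oplus DS^{\la 2,1\ra})$, bounds the two pieces by the explicit coresolutions (Proposition~\ref{5dotonedotone} gives $\le p-2$ for $D\Lambda_0$; the sequence~(\ref{eq3}) gives $\le 2(p-1)$ for $DS^{\la 2,1\ra}$), and then applies \cite{zbMATH02105773} to obtain $\gldim A_0\le 2\injdim_{A_0}T_0\le 4(p-1)$. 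For $p=3$ it simply quotes \cite{P}. The value $\pdim_{A_0}Q=p-2$ is deduced at the end, again via Lemma~2.8(i): $\injdim_{A_0}Q=\dim_{\add Q_\Lambda}D\Lambda_0$, and the coresolutions of Proposition~\ref{5dotonedotone} are checked to be minimal.

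There is a genuine gap in your derivation of $\pdim_{A_0}Q$. The isomorphisms $\Ext^i_{A_0}(I(\lambda),Q)\cong\Ext^i_{\Lambda_0}(Y^\lambda,\Lambda_0)$ are valid, but they do not compute $\injdim_{A_0}Q$: injective dimension is detected by $\Ext^i_{A_0}(S,Q)$ for simples $S$, not by Ext against injectives $I(\lambda)$, and the finitistic dimension of $\Lambda_0$ does not bridge this (in particular $Y^{\la p\ra}$ has infinite projective dimension, so the summand $\lambda=\la p\ra$ is uncontrolled). What the mechanism of Corollary~\ref{simpleYoungmoduledimension} actually yields is that each $I(\lambda)$ with $\lambda\ne\la p\ra$ has an $\add Q$-resolution of length $p-2$, a statement about $\dim_{\add Q}I(\lambda)$ rather than about $\pdim_{A_0}Q$. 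To convert this into $\pdim_{A_0}Q=p-2$ you need precisely the identification $\injdim_{A_0}Q=\dim_{\add Q_\Lambda}D\Lambda_0$ from \citep[Lemma 2.8(i)]{CP2} (equivalently, the quasi-generator correspondence of \cite{MTcorrespondence}) together with minimality of the Proposition~\ref{5dotonedotone} resolutions --- that is, the paper's own argument. Since your use of \citep[Lemma 3.8]{CP2} takes $\pdim_{A_0}Q$ as an input, this gap propagates to the global-dimension step as well.
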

\begin{proof} $Q$ is a direct summand of a characteristic tilting module, therefore it is a self-orthogonal $A_0$-module. 
	For the first claim, we need to show that $\gldim A_0\leq 4(p-1)\leq Q\domdim_{A_0} A_0$. The second inequality holds by Theorem \ref{maintheoremone} and Corollary \ref{4dot3dot2}. So, we determine now $\gldim A_0$. To do this, it is actually enough to show that $\gldim A_0\leq 4(p-1)$. Indeed,  since $Q\domdim_{A_0} A_0$ is a finite number (in particular $Q$ is not a characteristic tilting module) we have $\gldim A_0\geq Q\domdim_{A_0} A_0=4(p-1)$.

Assume that $p=3$. Then the global dimension of $S_k(3, 6)$ is $4(3-1)=8$ (see \cite{P}). Since $A_0$ is a block component of $S_k(3, 6)$, then  $\gldim A_0\leq \gldim S_3(3, 6)=8$.

Assume now that $p\geq 5$. Our strategy in this case is to use the quasi-hereditary structure of $S_k(p, 2p)$ and determine an upper bound for the injective dimension of the characteristic tilting module of $A_0$. Fix $\Lambda:=\Lambda_k(p, 2p)$.
Let $T_0$ be the characteristic tilting module of $A_0$, that is, $T_0=Q\oplus T(2^p)$.  As discussed in the proof of Theorem \ref{maintheoremone}, $Q\codomdim_{A_0} T_0\geq 2$. Lemma 2.8(i) of \cite{CP2} shows that $\injdim_{A_0} T_0=\dim_{\add Q_\Lambda} D\Hom_{A_0}(Q, T_0)$ (see also Subsection \ref{Homological dimensions}). By Lemma \ref{4dot1dot1}, $$D\Hom_{A_0}(Q, T_0)\cong D\Hom_{A_0}(Q, Q)\oplus D\Hom_{A_0}(Q, T(2^p))\cong D\Lambda_0\oplus DS^{\la 2, 1 \ra}$$ as right $\Lambda_0$-modules.

Since $\Ext_\Lambda^i(D(Q_\La), D(Q_\La))=0=\Ext_\Lambda^i(Q_\La, Q_\La)=0$ for $1\leq i\leq 4(p-1)-2$ and $R_{2(p-1)}=Y^{\langle p\rangle}\in \add DQ$ we obtain that (\ref{eq3}) is exact under $\Hom_\Lambda(-, D(Q_\La))$. Write $\delta$ to denote the exact sequence (\ref{eq3}). Then $D\delta$ is exact and it remains exact under $\Hom_\Lambda(Q_\La, -)$ and this means that $\dim_{\add Q_\Lambda} DS^{\la 2, 1\ra}\leq 2(p-1)$. Denote by $\gamma$ the direct sum of all the exact sequences (1)-(5) appearing in the proof of Proposition \ref{5dotonedotone}. This is an $\add D(Q_\La)$-coresolution of the direct sum of the projective $\Lambda_0$-modules which are not Young modules. Since the size of this coresolution is $p-2$ and $\Ext_{\Lambda_0}^i(Q, Q)=0$ for $1\leq i\leq p\leq  4(p-1)-2$, $\gamma$ remains exact under $\Hom_{\Lambda_0}(-, DQ)$. Thus, $D\gamma$ is exact and it remains exact under $\Hom_{\Lambda_0}(Q, -)$. So, $\dim_{\add Q_\Lambda} D\Lambda_0\leq p-2\leq 2(p-1)$ and it follows that $\injdim_{A_0} T_0\leq 2(p-1)$. The simple preserving duality on $A_0$ imposes then that $\gldim A_0\leq 4(p-1)$ (see \cite{zbMATH02105773}).

	Observe that $\dim_{\add Q} D\Lambda$ cannot be smaller than $p-2$ since the coresolutions constructed in the proof of Proposition \ref{5dotonedotone} are sent under $\Hom_\Lambda(-, D(Q_\La))$ to projective resolutions of direct summands of $D(Q_\La)$ whose middle terms are indecomposable modules. Hence, such projective resolutions are minimal and thus $\pdim_{A_0} Q=p-2$.

	For $p=3$, the resolutions in Subsection \ref{sec4dot3} have length one, and since $Q_\Lambda$ is self-orthogonal up to degree $4(p-1)-2$, those resolutions remain exact under $\Hom_\Lambda(-, DQ)$ as well. So, $\dim_{\add Q} D\Lambda=1$ in characteristic three and so $\pdim_{A_0} Q=1=3-2$ using the equality established in Lemma 2.8(i) of \cite{CP2}.
\end{proof}

	So according to \citep[Theorem 3.3]{MTcorrespondence}, Proposition \ref{5dotonedotone} can be reformulated into saying that $Q_\La$ is an $(p-2)$-quasi-cogenerator over $\Lambda$ and its self-duality implies that it is also an $(p-2)$-quasi-generator over $\Lambda$.

From the above theorem it follows that $$\gldim A_0=4(p-1)\geq 2(p-1)=2p-2=(p-2)+(p-2)+2=\pdim_{A_0} Q+\injdim_{A_0} Q+2.$$ This means that these pairs fit into the higher dimensional Auslander-Iyama-Solberg correspondence established in \cite{CP2}. Further, $Q_\La$ is a $(4(p-1), p-2, p-2)$-quasi-precluster tilting $\Lambda$-module in the sense of \cite{CP2}. In particular, the module $\tau\Omega^{3p-4}Q_\La\cong \tau\Omega^{3p-4}Y^{\la p\ra}$ fits into an $\add Q_\La$-resolution of length $p-2$ and 
$$\add Q_\Lambda={}^{\perp_{p-2}} Q_\Lambda\cap Q_\Lambda^{\perp_{3p-4}}={}^{\perp_{3p-4}} Q_\Lambda\cap Q_\Lambda^{\perp_{p-2}}.$$

	\subsection{Global dimension of $S_k(p, 2p)$ and the projective dimension of $V^{\otimes 2p}$} \label{globaldimensionofSp2p} For $p\in \{2, 3\}$ the global dimension of $S_k(p, 2p)$ was determined in \cite{P}. For $p>3$, this homological invariant was determined in \citep{zbMATH02005565} using the theory of algebraic groups. However, in such cases the projective dimension (and injective dimension) of the tensor space remains to be determined. In this section, we illustrate another way to compute $\gldim S_k(p, 2p)$ based on the cellular structure of the symmetric group, and as by-product we obtain the projective and injective dimension of the tensor space. 
		With our approach we have shown already that the global dimension of the principal block of $S_k(p, 2p)$ is $4(p-1)$, so we just need to check the global dimension of the non-principal blocks of $S_k(p, 2p)$. 
		
		\subsubsection{Blocks of $S_k(n, d)$} 
	
	Since $(k^n)^{\otimes d}$ affords a double centraliser property between $S_k(n, d)$ and $\Lambda_k(n, d)$ the blocks of $S_k(n, d)$ are in one to one correspondence with the blocks of $\Lambda_k(n, d)$ in the sense that if $\mathcal{B}$ is a block of $\Lambda_k(n, d)$, then $\Hom_{S_k(n, d)}((k^n)^{\otimes d}, -)$ sends the injective modules of the respective block ${S_k(n, d)}_\mathcal{B}$ to the Young modules living in the block $\mathcal{B}$ (for example, this follows a similar argument as in \citep[Corollary 5.38]{zbMATH01361790} together with Equation (\ref{eqone})). Moreover, if $X$ is the maximal multiplicity-free direct summand of $(k^n)^{\otimes d}$ living in the block $\mathcal{B}$, then the block component of the Schur algebra corresponding to the block $\cB$ is Morita equivalent to $\End_{\mathcal{B}}(X)$. Further, $\mathcal{B}$ can be written as a quotient of a block component of $k\cS_d$, say $\mathcal{K}$, and $\End_{\mathcal{B}}(X)\cong \End_{\mathcal{K}}(X)$. So the block component ${S_k(n, d)}_\mathcal{B}$ is also completely determined by the block $\mathcal{K}$.
	
	We recall the description of the blocks of finite representation-type of $S_k(n, d)$ described in \cite{E2}.
	
		\begin{Lemma} Let $k$ be an algebraically closed field of characteristic $p$, and let $\cB$ be a block of finite representation-type in the group algebra $k\cS_d$. Assume that $n \leq d$.  \label{lemma5dot3dot1}
			\begin{enumerate}[(a)]
				\item The  partitions in $\cB$ are linearly ordered by the dominance order, say $\mu_1>\mu_2>\ldots > \mu_{p}$.
				\item Assume  that $m$ is the largest natural number such that $\mu_m$ has  at most $n$ parts. Then the partitions
				$\mu_1, \ldots, \mu_{m}$  have at most $d$ parts and $\mu_k$ has more than $n $ parts for $k>m$.
				\item Let $S_k(n,d)_{\cB}$ be the component of the Schur algebra corresponding to the block $\cB$. Then 
				$S_k(n,d)_{\cB}$ is Morita equivalent to the algebra $\cA_m$ as defined in \cite{E2}.  In particular, it has finite representation-type and $m$ is smaller than or equal to the number of simple modules in $\cB$.
			\end{enumerate}
	\end{Lemma}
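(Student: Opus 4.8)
The plan is to split according to the $p$-weight of $\cB$. Since $\cB$ has finite representation-type, it has cyclic defect group, equivalently $p$-weight $0$ or $1$ (a block of $k\cS_d$ of weight at least $2$ is of infinite representation-type). If $\cB$ has weight $0$, it is simple and consists of a single partition, so (1) is vacuous and in (2)--(3) one has $m\leq 1$ with $\cA_m$ the ground field (or the zero algebra); everything is immediate. So from now on assume $\cB$ has weight $1$, with $p$-core $\gamma$, so that $|\gamma|=d-p$.

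For (1), I would fix an abacus display of $\gamma$ on $p$ runners with $b$ beads, $b$ a sufficiently large multiple of $p$; then each runner carries a top-justified block of beads. The partitions in $\cB$ are exactly the $p$ partitions obtained by moving, one runner at a time, the lowest bead of that runner one step down; write $\lambda^{(v)}$ for the partition coming from runner $v$. Using the standard criterion that $\lambda\geq\mu$ in the dominance order if and only if the partial sums $\sum_{i\leq j}\beta_i(\lambda)\geq\sum_{i\leq j}\beta_i(\mu)$ hold for all $j$, where $\beta_1>\beta_2>\cdots$ are the bead positions, a direct comparison of the displays $\lambda^{(v)}$ shows that they are pairwise comparable; I order them $\mu_1>\mu_2>\cdots>\mu_p$. (Equivalently, this is the classical fact that the partitions of a weight-$1$ block form a chain, the block being a Brauer line algebra.)

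For (2), I would use that in a $b$-bead display the number of parts $\ell(\lambda)$ equals $b$ minus the position of the first gap of the display. Comparing the displays $\lambda^{(v)}$ from the previous step, one checks that along the chain $\mu_1>\cdots>\mu_p$ the position of the first gap is non-increasing, hence $i\mapsto\ell(\mu_i)$ is non-decreasing. Therefore $\{\,i:\ell(\mu_i)\leq n\,\}$ is an initial segment $\{1,\dots,m\}$: the partitions $\mu_1,\dots,\mu_m$ have at most $n\leq d$ parts, while $\ell(\mu_k)>n$ for every $k>m$. This is exactly (2).

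For (3), I would invoke the block correspondence recalled immediately before the statement: the block component $S_k(n,d)_{\cB}$ is Morita equivalent to $\End_{k\cS_d}(X)$, where $X$ is the maximal multiplicity-free summand of $(k^n)^{\otimes d}$ lying in $\cB$. Since the indecomposable summands of $D(k^n)^{\otimes d}$ are exactly the Young modules $Y^{\lambda}$ with $\lambda\in\Lambda^+(n,d)$, part (2) gives $X\cong\bigoplus_{i=1}^m Y^{\mu_i}$. Now feed in the explicit description of the Young modules of a weight-$1$ (Brauer line) block from \cite{E2}: matching that description against the chain $\mu_1>\cdots>\mu_p$ identifies $\End_{k\cS_d}\bigl(\bigoplus_{i=1}^m Y^{\mu_i}\bigr)$ with the algebra $\cA_m$ of \cite{E2}, which is there shown to be of finite representation-type, and the bound on $m$ is read off there as well. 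I expect the main obstacle to be bookkeeping: getting the two combinatorial orderings in (1)--(2) to line up, and in particular tracking the unique $p$-singular partition $\mu_p$ and which end of the chain carries the projective Young modules, so that the endomorphism algebra is identified with $\cA_m$ rather than a reindexed or opposite version of it. The abacus monotonicity in (2) is the only genuinely new combinatorial input; parts (1) and (3) are, after this, assembly plus the cited structure theory of Brauer line blocks and of the algebras $\cA_m$.
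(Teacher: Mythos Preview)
Your proposal is correct and follows essentially the same approach as the paper. The paper is terser: it cites \cite[4.1]{E2} for (1) and (2) where you sketch the abacus argument, and for (3) it makes the Young module structure explicit (namely $Y^{\mu_1}=S^{\mu_1}$ is simple and $Y^{\mu_i}$ for $2\leq i\leq p-1$ is the indecomposable projective with Specht quotients $S^{\mu_{i-1}}$ and $S^{\mu_i}$) before invoking \cite[Proposition 3.2]{E2}, whereas you defer that description to the citation; but the logical structure is the same.
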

\begin{proof}
	For (a) and (b) see \citep[4.1]{E2}.
	
	The Young modules in $\cB$ are as follows: 
	We have $Y^{\mu_1} = S^{\mu_1}$ which is simple, and  for \mbox{$2\leq i\leq p-1$,} the Young module $Y^{\mu_i}$ is the indecomposable projective module 
	 with Specht subquotients
	$S^{\mu_{i-1}}$ and $S^{\mu_i}$. 
The block component $S(n,d)_{\cB}$ is Morita equivalent to the endomorphism algebra of $\oplus_{i=1}^m Y^{\mu_i}$. 
	Then part (c) follows directly from \citep[Proposition 3.2]{E2}, noting that the Young modules have the appropriate submodule structure.
\end{proof}

	\subsubsection{Non-principal blocks of $S(p, 2p)$}
	To understand the form of the non-principal blocks of $S_k(p, 2p)$, we will analyse the size of the blocks of $\Lambda_k(p, 2p)$. In turn, they arise as quotients of blocks of $k\cS_{2p}$. By \emph{Nakayama's conjecture on the blocks of the symmetric group} and \cite{zbMATH00023610}, the blocks of $k\cS_{2p}$ are determined up to Morita equivalence by their $p$-core and $p$-weight (see also \cite{zbMATH03519005}). The Specht modules in $\Lambda$ are labelled by partitions of $2p$ in at most $p$ parts. The (combinatorial) weight of a block is the number of $p$-hooks removed to reach the $p$-core. So, for partitions $\lambda\in \L^+(p, 2p)$ the weight is either one or two or zero. If the weight is zero, then the block is semisimple, and thus it can be ignored. If the weight is two, then the $p$-core is empty and in such a case we would get the principal block $\Lambda_0$. So, every non-principal (non-semisimple) block component of $\Lambda$ is the quotient of a block of $k\cS_{2p}$, say $\mathcal{K}$, with $p$-weight  equal to one. Since $p$ is larger than the weight of $\mathcal{K}$, the defect of the block $\mathcal{K}$ is equal to the $p$-weight, hence in this case it is one (see for instance \cite{MR644144}). 
	By \citep[Example 1]{zbMATH00023610}, the block $\mathcal{K}$ is Morita equivalent to the principal block of $k\cS_p$, and in particular it has finite representation-type. So, the number of simple modules in $\mathcal{K}$ is $p-1$ (see for instance \cite{E2}).
	By Lemma \ref{lemma5dot3dot1}, the block component of the Schur algebra $S_k(p, 2p)$ corresponding to the block $\mathcal{K}$ is Morita equivalent to $\mathcal{A}_m$ for some $m\leq p-1$.

By direct computation, making use of the quiver and relations of $\mathcal{A}_m$, it is not difficult to see that the global dimension of $\mathcal{A}_m$ is precisely $2(m-1)$, and that the dominant dimension of $\mathcal{A}_m$ is also $2(m-1)$ (compare with \citep[Theorem 1.1]{zbMATH07463798} when the $p$-weight is one). Hence, it is actually a higher Auslander algebra of global dimension $2(m-1)$. Thus,  any block component of $S_k(p, 2p)$ distinct from the principal block has global dimension smaller than or equal to $2(p-2)\leq 4(p-1).$

	Combining this discussion with Theorem \ref{maintheorempair}, we showed the following.
	
	\begin{Theorem}\label{thm5dotone}
		The global dimension of $S_k(p, 2p)$ is equal to $4(p-1)$.
	\end{Theorem}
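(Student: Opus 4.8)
The plan is to reduce the computation of $\gldim S_k(p, 2p)$ to two pieces: the principal block and the non-principal blocks, each of which has already been handled in the excerpt. Since $S_k(p, 2p)$ decomposes as a direct product of its block components, its global dimension is the maximum of the global dimensions of those blocks. The principal block $A_0$ has global dimension exactly $4(p-1)$ by Theorem \ref{maintheorempair}. So the entire statement will follow once I show that every non-principal block component has global dimension at most $4(p-1)$; in fact I will show the stronger bound $2(p-2)$.

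First I would recall the block correspondence: since $(k^p)^{\otimes 2p}$ affords a double centraliser property between $S_k(p, 2p)$ and $\Lambda_k(p, 2p)$, the non-principal, non-semisimple blocks of $S_k(p, 2p)$ correspond to non-principal, non-semisimple blocks of $\Lambda_k(p, 2p)$, which in turn are quotients of blocks $\mathcal{K}$ of $k\cS_{2p}$. For partitions of $2p$ with at most $p$ parts, the $p$-weight is $0$, $1$, or $2$; weight $0$ gives semisimple blocks (which can be discarded), weight $2$ forces $p$-core $\emptyset$, which is the principal block. So every non-principal non-semisimple block arises from a block $\mathcal{K}$ of $k\cS_{2p}$ of $p$-weight one. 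By Nakayama's conjecture together with \cite{zbMATH00023610} (Scopes's/Jost-type result, Example 1 there), such a block $\mathcal{K}$ is Morita equivalent to the principal block of $k\cS_p$, has finite representation-type, and has $p-1$ simple modules. By Lemma \ref{lemma5dot3dot1}, the corresponding block component of $S_k(p, 2p)$ is then Morita equivalent to $\mathcal{A}_m$ for some $m \leq p-1$, in the notation of \cite{E2}.

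Then I would compute $\gldim \mathcal{A}_m$. Using the explicit quiver and relations of $\mathcal{A}_m$ from \cite{E2} — the relevant endomorphism algebra of a sum of Young modules with linearly ordered Specht quotients — a direct computation of projective resolutions of the simple modules shows $\gldim \mathcal{A}_m = 2(m-1)$, and in fact $\mathcal{A}_m$ is a higher Auslander algebra with $\operatorname{domdim} \mathcal{A}_m = \gldim \mathcal{A}_m = 2(m-1)$ (this is consistent with \citep[Theorem 1.1]{zbMATH07463798} in the weight-one case). Since $m \leq p-1$, we get $\gldim \mathcal{A}_m \leq 2(p-2)$. Combining: every non-principal block component of $S_k(p, 2p)$ has global dimension at most $2(p-2) < 4(p-1)$, while the principal block $A_0$ has global dimension exactly $4(p-1)$ by Theorem \ref{maintheorempair}. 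Hence $\gldim S_k(p, 2p) = 4(p-1)$.

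The main obstacle I anticipate is the explicit global-dimension computation for $\mathcal{A}_m$: one needs to have the quiver and relations precisely (and to track the structure of the Young modules $Y^{\mu_i}$ with their two-step Specht filtrations) in order to write down the syzygies of the simple modules and verify that resolutions terminate at length exactly $2(m-1)$. This is essentially a bookkeeping computation on a small, well-understood Nakayama-type algebra, so it should be routine but must be done carefully; everything else is an assembly of results already in the excerpt (the block correspondence, Lemma \ref{lemma5dot3dot1}, and Theorem \ref{maintheorempair}). Note that the case $p \in \{2,3\}$ is already covered by \cite{P}, so strictly one only needs $p \geq 5$ for the new content, though the statement and proof are uniform.
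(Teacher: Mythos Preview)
Your proposal is correct and follows essentially the same approach as the paper: decompose $S_k(p,2p)$ into blocks, invoke Theorem \ref{maintheorempair} for the principal block, and bound the non-principal (weight-one) blocks via Lemma \ref{lemma5dot3dot1} and the computation $\gldim \mathcal{A}_m = 2(m-1) \leq 2(p-2)$. The paper's proof is exactly this assembly, with the $\mathcal{A}_m$ computation handled in the discussion preceding the theorem rather than inside the proof itself.
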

\begin{proof}
	The global dimension of $S_k(p, 2p)$ is the supremum of the global dimensions of all its block components. So by  Theorem \ref{maintheorempair} and the discussion above, the claim follows.
\end{proof}
	In the example below, we illustrate that the number of simple modules in a block of $\Lambda$ can be smaller than $p-1$, and thus the global dimension of the respective block component of $S_k(p, 2p)$ can be strictly smaller than $2(p-2)$.
	
		\begin{Example} Let $p=5$, then there are two $5$-cores of size $5$ (with $5$-weight one), they are $(3, 2)$ and $(2^2, 1)$.  We can use an abacus to display the partitions in the block $B_{(3,2)}$ that contains $(3, 2)$.
		
		Take an abacus $\Gamma = \la 1,1,2,1,2\ra$ (ie it has one bead on runners $1,2,4$ and two beads on runners $3$ and $5$. 
		Then we label the partitions exactly as we have done before. Then they are in dominance order by
		$$\la 5\ra > \la 3\ra > \la 4\ra > \la 2\ra > \la 1\ra.$$
		In terms of partitions, 
		$$(8, 2) > (6, 4) > (3^3, 1)  > (3, 2^2, 1^3) > (3, 2, 1^5).
		$$
		The last two have more than $5$ parts. The quotient has then three simple modules.
	\end{Example}

\subsubsection{The projective dimension of the tensor power}

\begin{Cor}
	The pair $(S_k(p, 2p), V^{\otimes 2p})$ is a relative $4(p-1)$-Auslander pair and $$\pdim_{S_k(p, 2p)} V^{\otimes 2p}=\injdim_{S_k(p, 2p)} V^{\otimes 2p}=\begin{cases}
		p-2, &\text{ if } p>2\\
		1, &\text{ if } p=2
	\end{cases}.$$
\end{Cor}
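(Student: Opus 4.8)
The plan is to read off both statements from results already established, after a block decomposition and a reduction to algebraically closed fields.

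\textbf{The relative Auslander pair.} By definition $(S_k(p,2p),V^{\otimes 2p})$ is a relative $n$-Auslander pair exactly when $\gldim S_k(p,2p)\le n\le V^{\otimes 2p}\domdim_{S_k(p,2p)} S_k(p,2p)$. Theorem \ref{maintheoremarbitraryfield} gives $V^{\otimes 2p}\domdim_{S_k(p,2p)} S_k(p,2p)=4(p-1)$ over an arbitrary field, and Theorem \ref{thm5dotone} (when $k$ is not algebraically closed, after the base change $\overline{k}\otimes_k S_k(p,2p)\cong S_{\overline{k}}(p,2p)$, under which global dimension is unchanged) gives $\gldim S_k(p,2p)=4(p-1)$. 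Hence $n=4(p-1)$ works and both inequalities become equalities.

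\textbf{Symmetry of the two dimensions.} I would use that $V^{\otimes 2p}$ is a direct summand of the characteristic tilting module $V^{\otimes 2p}\oplus T(2^p)$ of $S_k(p,2p)$; in particular it is self-orthogonal. The simple-preserving duality $d$ of $S_k(p,2p)\m$ fixes each indecomposable partial tilting module, hence $d(V^{\otimes 2p})\cong V^{\otimes 2p}$; since $d$ is an exact contravariant self-equivalence interchanging projective and injective modules, $\pdim_{S_k(p,2p)} M=\injdim_{S_k(p,2p)} d(M)$ for all $M$, and for $M=V^{\otimes 2p}$ this gives $\pdim_{S_k(p,2p)} V^{\otimes 2p}=\injdim_{S_k(p,2p)} V^{\otimes 2p}$.

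\textbf{The common value.} For $p=2$ this is contained in \cite{CE24}. For $p>2$, I would reduce to $k=\overline{k}$ by flat base change (projective dimension is preserved, and $\overline{k}\otimes_k V^{\otimes 2p}\cong(\overline{k}^p)^{\otimes 2p}$). Writing $S_k(p,2p)$ as the product of its block components, $\pdim_{S_k(p,2p)} V^{\otimes 2p}$ is the maximum, over the blocks, of the projective dimension of the summand of $V^{\otimes 2p}$ lying in that block. For the principal block that summand is $Q$ and $\pdim_{A_0}Q=p-2$ by Theorem \ref{maintheorempair}. For a non-principal non-semisimple block $\mathcal{B}$, by Lemma \ref{lemma5dot3dot1} and the discussion preceding Theorem \ref{thm5dotone} the corresponding block component of $S_k(p,2p)$ is quasi-hereditary with at most $p-1$ simple modules (Morita equivalent to $\mathcal{A}_m$ with $m\le p-1$), and the summand of $V^{\otimes 2p}$ in it is a characteristic tilting module there; since over a quasi-hereditary algebra with $N$ simple modules every module with a standard filtration has projective dimension at most $N-1$, this characteristic tilting module has projective dimension at most $(p-1)-1=p-2$. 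Semisimple blocks contribute $0$. So the maximum over all blocks is $p-2$, and together with the previous paragraph the stated formula follows.

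\textbf{Where the work is.} Almost everything is assembly of facts already proved; the point deserving care is the upper bound for the non-principal blocks, handled above via the elementary bound on projective dimensions of modules with a standard filtration over a quasi-hereditary algebra together with the count $m\le p-1$ of simple modules in those block components. The one substantive input, that the principal block contributes exactly $p-2$, is Theorem \ref{maintheorempair}.
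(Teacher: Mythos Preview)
Your proof is correct and follows the same overall architecture as the paper: block decomposition, Theorem \ref{maintheorempair} for the principal block, self-duality for the symmetry of projective and injective dimension, and a separate bound for the non-principal blocks.

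The one genuine difference is in how you bound $\pdim V_{\mathcal{B}}$ on a non-principal block. The paper uses that $\gldim \mathcal{B}\le 2(p-2)$ together with the Mazorchuk--Ovsienko result (\cite{zbMATH02105773}) that the projective dimension of a characteristic tilting module is half the global dimension when there is a simple-preserving duality, giving $\pdim V_{\mathcal{B}}\le p-2$. You instead invoke the elementary fact that in a quasi-hereditary algebra with $N$ simple modules any module in $\mathcal{F}(\Delta)$ has projective dimension at most $N-1$, combined with $m\le p-1$ from Lemma \ref{lemma5dot3dot1}. Your route is more self-contained (it avoids the external citation), while the paper's route reuses the global-dimension computation already done for these blocks; both yield the same bound $p-2$.
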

\begin{proof} For $p=2$ we refer the reader to \cite{CE24} and \citep[Example 4.1]{CP2}. Assume now that $p>2$.
	Since $\la 2, 1\ra=(2^p)$ is the only non $p$-regular partition of $2p$ in at most $p$ parts, the direct summands of $V^{\otimes 2p}$ in the non-principal block components are characteristic tilting modules. Let $\mathcal{B}$ be a non-principal block of $S_k(p, 2p)$ and $X$ the maximal multiplicity-free direct summand of $V^{\otimes 2p}$ in the block component $\mathcal{B}$. Since $X$ is a full tilting module, $X \domdim \mathcal{B}=+\infty$. By Theorem \ref{maintheoremone}, it follows that $V^{\otimes 2p}\domdim S_k(p, 2p)=4(p-1)$ (see also \citep[Corollary 3.1.9]{Cr2}). By Theorem \ref{thm5dotone}, the first claim follows. 
	
	By Theorem \ref{maintheorempair}, $\pdim_{A_0} Q=p-2$. By the discussion above, $\gldim \mathcal{B}\leq 2(p-2)$. Since $X$ is a characteristic tilting module, we obtain by \citep[Corollary 1]{zbMATH02105773} that $\pdim X\leq p-2$. It follows that $\pdim_{S_k(p, 2p)} V^{\otimes 2p}=p-2$. Since it is self-dual, the same statement holds for the injective dimension.
\end{proof}

\subsubsection{Global dimension of Schur algebras $S(t, 2p)$}

In \cite{zbMATH00966941}, Totaro proved that $S_k(2p, 2p)$ has global dimension equal to $2(2p-2)=4(p-1)$ when $k$ has characteristic $p>2$ and $\gldim S_k(4, 4)=2(4-1)=6$ when $k$ has characteristic two. So far, our approach unravelled the value of $\gldim S_k(p, 2p)$ among other insights into the homological properties of $S_k(p, 2p)$ like the projective dimension of the tensor space. Now, we provide an alternative approach  to assert that $\gldim S_k(p, 2p)=4(p-1)=\gldim S_k(2p, 2p)$ when $p>2$ combining Totaro's result with our results on relative dominant dimensions making use of Schur functors and the simple preserving duality. As a by-product, we obtain the value of all intermediate Schur algebras between $S_k(p, 2p)$ and $S_k(2p, 2p)$.

\begin{Cor}\label{cor5dot3dot5}
	Let $k$ be an algebraically closed field with characteristic $p>0$. Then, $\gldim S_k(t, 2p)=4(p-1)$ for every $t=p, \ldots, 2p-1$.
\end{Cor}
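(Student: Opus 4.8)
The plan is to sandwich $\gldim S_k(t,2p)$ between the two values $\gldim S_k(p,2p)=4(p-1)$ and $\gldim S_k(2p,2p)=4(p-1)$, exploiting that for $m\le n$ the Schur algebra $S_k(m,2p)$ is an idempotent truncation $eS_k(n,2p)e$, where $e=\pi^{\otimes 2p}$ for an idempotent endomorphism $\pi$ of $k^n$ with image a fixed $m$-dimensional subspace; in terms of weights, $e$ is the idempotent attached to the partitions of $2p$ having at most $m$ parts. The heart of the argument is the monotonicity
\[
\gldim S_k(m,2p)\le\gldim S_k(n,2p)\qquad(p\le m\le n\le 2p).
\]
To obtain it, first note that $\{\lambda\in\Lambda^+(n,2p):\lambda\text{ has at most }m\text{ parts}\}$ is a coideal of $\Lambda^+(n,2p)$ for the dominance order: if $\mu\ge\lambda$ and $\lambda$ has at most $m$ parts, then $\sum_{i=1}^m\mu_i\ge\sum_{i=1}^m\lambda_i=|\lambda|=|\mu|$, so $\mu$ has at most $m$ parts as well. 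Hence the Schur functor $e\cdot(-)\colon S_k(n,2p)\m\to S_k(m,2p)\m$ is exact, sends $L(\lambda)$ to $L(\lambda)$ when $\lambda$ has at most $m$ parts and to $0$ otherwise, and, being the functor associated with a coideal of the weight poset of the quasi-hereditary algebra $S_k(n,2p)$, carries projective modules to projective modules. Applying $e\cdot(-)$ to a minimal projective resolution of the simple module $L(\lambda)$ over $S_k(n,2p)$, for $\lambda$ with at most $m$ parts, produces a projective resolution of $L(\lambda)$ over $S_k(m,2p)$ of no greater length; taking the supremum over all such $\lambda$ --- which exhausts the simple $S_k(m,2p)$-modules --- yields the displayed inequality.

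\textbf{Assembling the proof.} Granting the monotonicity, for every $t$ with $p\le t\le 2p-1$ one has
\[
4(p-1)=\gldim S_k(p,2p)\le\gldim S_k(t,2p)\le\gldim S_k(2p,2p)=4(p-1),
\]
the first equality being Theorem~\ref{thm5dotone} (which itself rests on Theorem~\ref{maintheorempair}, hence on the relative dominant dimension computation of Theorem~\ref{maintheoremarbitraryfield} together with the simple preserving duality on $S_k(p,2p)$), and the last being Totaro's theorem~\cite{zbMATH00966941}, valid for $p>2$; for $p=3$ this reads $\gldim S_k(6,6)=8=4(p-1)$, still covered by Totaro. In characteristic two the claimed range is $t\in\{2,3\}$ and the upper endpoint $S_k(4,4)$ has global dimension $6$, so the sandwich is unavailable; instead $\gldim S_k(2,4)=\gldim S_k(3,4)=4=4(p-1)$ both follow directly from \cite{P} (the case $t=2$ also from \cite{CE24}), settling that case. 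Since the corollary fixes $k$ algebraically closed, no further base change is required.

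\textbf{Main obstacle.} The delicate point is the monotonicity, and inside it the assertion that the coideal Schur functor $e\cdot(-)$ preserves projectivity --- this is exactly where the quasi-hereditary structure is used, and it is what allows a single truncation inequality to serve for both the lower and the upper bound. Some care is needed here: the analogous statement fails for the truncation $S_k(2p,2p)\rightsquigarrow k\cS_{2p}$, the difference being that $\{(1^{2p})\}$ is an ideal rather than a coideal of the dominance order (and indeed $\gldim k\cS_{2p}=\infty$). If preservation of projectivity turns out to be awkward to invoke at this level of generality, an alternative is to re-run the argument of Theorem~\ref{maintheorempair} for each $S_k(t,2p)$, bounding the injective dimension of its characteristic tilting module by a relative dominant-dimension computation as there; but the route through Totaro is shorter and is the one I would pursue.
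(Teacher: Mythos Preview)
Your sandwich strategy coincides with the paper's, and your upper-bound variant (monotonicity up to $S_k(2p,2p)$, then Totaro at $n=d$, with a separate citation for $p=2$) is a harmless reshuffling of the paper's direct appeal to Totaro's general bound for $S_k(t,2p)$, which already yields $4(p-1)$ once $t\ge p$ and covers $p=2$ uniformly.

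The substantive difference is in how you justify the monotonicity $\gldim S_k(m,2p)\le\gldim S_k(n,2p)$, and here there is a genuine gap. Your assertion that the coideal Schur functor $e\cdot(-)$ ``carries projective modules to projective modules'' is \emph{false} for quasi-hereditary algebras in general. Take $A=kQ/\rad^2$ where $Q$ is the quiver $1\xrightarrow{a}2\xrightarrow{b}3$, with the order $1<2<3$; this is quasi-hereditary with $\Delta(i)=L(i)$ for all $i$, and $\Gamma=\{2,3\}$ is a coideal. One computes $eAe\cong k[2\to 3]$ and $eP_A(1)=k\cdot a\cong L_{eAe}(2)$, which is not projective. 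So applying $e\cdot(-)$ to a projective resolution need not produce one, and you give no argument specific to Schur algebras that would rescue the step.

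The paper avoids this by running the truncation through characteristic tilting modules rather than projectives. The functor $e\cdot(-)$ is exact and sends each $\Delta(\mu)$, $\nabla(\mu)$ to the corresponding object over $eAe$ or to zero (this is what the paper extracts from 1.5 and 1.6 of \cite{E1}); hence it preserves $\add T=\mathcal{F}(\Delta)\cap\mathcal{F}(\nabla)$, and an $\add T_t$-resolution of the direct sum of costandard modules is sent to an $\add T_p$-resolution. This gives $\pdim_{S_k(p,2p)} T_p\le\pdim_{S_k(t,2p)} T_t$, and then the result of \cite{zbMATH02105773} (that $\gldim=2\pdim T$ for quasi-hereditary algebras with a simple-preserving duality) yields the monotonicity. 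Replacing your projective-resolution step with this tilting argument closes the gap.
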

\begin{proof}
	By \cite{Gr}, there exists an idempotent $e$ of $S_k(t, 2p)$ such that $S_k(p, 2p)\cong eS_k(t, 2p)e$.  This idempotent satisfies 1.5 and 1.6 of \cite{E1}. Let $T_t$ be the multiplicity-free characteristic tilting module of $S_k(t, 2p)$ and $\Cs_t$ be the direct sum of all indecomposable costandard modules of $S_k(t, 2p)$ for $t=p, \ldots, 2p-1$  By 1.6 of \cite{E1}, by multiplying an $\add T_t$ resolution of costandard modules with the idempotent $e$, the resolution remains exact and the middle terms are sent to $\add T_p$ (potentially some of the middle terms of the resolution become zero). This means that $\dim_{\add T_p} \Cs_p\leq \dim_{\add T_t} \Cs_t$.
	By Lemma 2 of \cite{zbMATH02105773}, we obtain
	$$\pdim_{S_k(p, 2p)} T_p=\dim_{\add T_p} \Cs_p\leq \dim_{\add T_t} \Cs_t=\pdim_{S_k(t, 2p)}=\pdim_{S_k(t, 2p)} T_t.$$
	
	By the main result of \cite{zbMATH02105773}, we then obtain that $\gldim S_k(p, 2p)\leq \gldim S_k(t, 2p)$. With this, Theorem \ref{thm5dotone} gives $\gldim S_k(t, 2p)\geq 4(p-1)$ \footnote{this bound can also be deduced from  Theorem \ref{maintheoremone} (and from \citep[Theorem B]{CE24} for characteristic two) since in this case the inequality $\gldim S_k(p, 2p)\geq V^{\otimes 2p}\domdim_{S_k(p, 2p)} S_k(p, 2p)$ holds.}. By Theorem 3 of \cite{zbMATH00966941},
	we obtain that $$ 4(p-1)\leq\gldim S_k(t, 2p)\leq 2(2p-\left\lceil \frac{p^1}{t} \right\rceil -\left\lceil \frac{p^1}{t} \right\rceil )=2(2p-2)=4(p-1).$$
So, $\gldim S_k(t, 2p)=4(p-1)$. 
\end{proof}

This result highlights another homological distinction between the case $p=2$ and larger primes. Specifically, for $p>2$, we have $\gldim S_k(2p, 2p)=\gldim S_k(p, 2p)$, whereas for $p=2$ $\gldim S_k(4, 4)=6\neq \gldim S_k(2, 4)=4$.

\subsubsection{Global dimension of Schur algebras $S(t, 2p)$ over arbitrary fields} \label{globaldimensionoverarbitraryfields}

Schur algebras are quasi-hereditary over any field (see for example \citep[Section 5]{cellularqhalgebras} and the references therein), hence they have finite global dimension. So, the global dimension of $S_k(t, 2p)$ coincides with the projective dimension of $DS_k(t, 2p)$. Since the latter has a base change property, this means that the global dimension of Schur algebras over a field is preserved under base change to an algebraically closed field. To make this precise, we use the following folklore lemma, widely known though rarely stated explicitly.

\begin{Lemma}
	Let $k$ be an arbitrary field and $A$ a finite-dimensional $k$-algebra. Then,  $$\injdim \overline{k}\otimes_k A=\injdim A,$$ where $\overline{k}$ denotes the algebraic closure of $k$. \label{lemmainvarianceofinjectivedimension}
\end{Lemma}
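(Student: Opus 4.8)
The plan is to prove the base-change invariance of injective dimension by a standard flat-base-change argument on Ext-groups, combined with the finiteness of injective dimension which is guaranteed in our setting (the algebras of interest are either quasi-hereditary, hence of finite global dimension, or Iwanaga–Gorenstein). First I would set $\overline{A}:=\overline{k}\otimes_k A$ and recall that $\overline{k}$ is a flat (indeed faithfully flat) $k$-module, so $\overline{A}$ is flat as a left and right $A$-module; in particular $-\otimes_k\overline{k}$ is exact and sends projective $A$-modules to projective $\overline{A}$-modules and preserves finitely generated modules. The key computational input is the natural isomorphism
\begin{equation*}
	\overline{k}\otimes_k\Ext_A^i(M,N)\;\cong\;\Ext_{\overline{A}}^i(\overline{k}\otimes_k M,\;\overline{k}\otimes_k N)
\end{equation*}
for all $i\ge 0$ and all $M,N\in A\m$, which follows by taking a projective resolution $P_\bullet\to M$ over $A$, noting $\overline{k}\otimes_k P_\bullet\to \overline{k}\otimes_k M$ is a projective resolution over $\overline{A}$, and using that $\Hom_A(P_i,N)\otimes_k\overline{k}\cong\Hom_{\overline{A}}(\overline{k}\otimes_k P_i,\overline{k}\otimes_k N)$ (finite generation of $P_i$ is what makes this Hom–tensor interchange hold) together with the exactness of $-\otimes_k\overline{k}$ to commute it past cohomology.

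Granting this isomorphism, I would argue as follows. For the inequality $\injdim\overline{A}\le\injdim A$: if $\injdim_A A=m<\infty$, then $\Ext_A^{i}(S,A)=0$ for all $i>m$ and all simple $S$; since every simple $\overline{A}$-module is a direct summand of $\overline{k}\otimes_k S$ for some simple $A$-module $S$ (the semisimple quotient $\overline{A}/\rad\overline{A}\cong\overline{k}\otimes_k(A/\rad A)$ is a base change of a semisimple algebra, which stays semisimple so the radical behaves well, or one argues directly that $\overline{k}\otimes_k(A/\rad A)$ is semisimple over the perfect — indeed algebraically closed — field), the displayed isomorphism with $M=S$, $N=A$ gives $\Ext_{\overline{A}}^i(S',\overline{A})=0$ for all $i>m$ and all simple $\overline{A}$-modules $S'$, whence $\injdim_{\overline{A}}\overline{A}\le m$. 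For the reverse inequality $\injdim A\le\injdim\overline{A}$: suppose $\injdim_{\overline{A}}\overline{A}=n<\infty$; then for any $M\in A\m$ the isomorphism with $N=A$ gives $\overline{k}\otimes_k\Ext_A^i(M,A)\cong\Ext_{\overline{A}}^i(\overline{k}\otimes_k M,\overline{A})=0$ for $i>n$, and since $\overline{k}$ is faithfully flat over $k$, $\overline{k}\otimes_k\Ext_A^i(M,A)=0$ forces $\Ext_A^i(M,A)=0$; taking $M$ to range over all simples yields $\injdim_A A\le n$. Combining the two inequalities gives the claim; the case where both sides are infinite is covered automatically since each finiteness implies the other.

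The main obstacle — really the only subtlety — is the passage between simple $A$-modules and simple $\overline{A}$-modules, i.e. ensuring that $\overline{k}\otimes_k(A/\rad A)$ is semisimple so that vanishing of $\Ext^{>m}(-,A)$ on all simple $A$-modules transfers correctly to $\overline{A}$. This is where algebraic closedness (or at least perfectness) of $\overline{k}$ is used: the Jacobson radical commutes with base change along a separable field extension, so $\rad(\overline{A})=\overline{k}\otimes_k\rad(A)$ and $\overline{A}/\rad(\overline{A})\cong\overline{k}\otimes_k(A/\rad A)$ is a base change of a finite-dimensional semisimple $k$-algebra along a perfect extension, hence semisimple; thus every simple $\overline{A}$-module appears in $\overline{k}\otimes_k S$ for a simple $A$-module $S$, and conversely $\overline{k}\otimes_k S$ is a semisimple $\overline{A}$-module whose simple constituents are exactly detected by the Ext-computation. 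I would state this radical-base-change fact as a cited or folklore sub-lemma rather than proving it in detail, and then the argument above closes quickly. (In the application, $A=S_k(t,2p)$ has finite global dimension by quasi-heredity, so the finiteness hypothesis needed to run the argument is automatic, and one concludes $\gldim\overline{k}\otimes_k S_k(t,2p)=\gldim S_k(t,2p)$ via $\gldim=\injdim$ of the regular module for algebras of finite global dimension.)
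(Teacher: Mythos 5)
Your overall strategy—the Ext base-change isomorphism plus faithful flatness of $\overline{k}/k$—is the right idea, and your argument for the inequality $\injdim A\le\injdim\overline{A}$ is sound and essentially coincides with the paper's. But your argument for the other inequality, $\injdim\overline{A}\le\injdim A$, has a genuine gap. You route that direction through simple modules: you need every simple $\overline{A}$-module to be a direct summand of $\overline{k}\otimes_k S$ for some simple $A$-module $S$, which requires $\overline{k}\otimes_k S$ to be semisimple, equivalently $\rad(\overline{A})=\overline{k}\otimes_k\rad(A)$. You justify this by "the Jacobson radical commutes with base change along a separable field extension" and by "perfectness of $\overline{k}$", but perfectness of $\overline{k}$ is irrelevant—the relevant condition is that the \emph{extension} $\overline{k}/k$ be separable, i.e.\ that $k$ itself be perfect. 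The lemma is stated for an arbitrary field. If $k$ is imperfect (say $k=\mathbb{F}_p(t)$) and $A$ has a simple module $S$ with $\End_A(S)$ purely inseparable over $k$, then $\overline{k}\otimes_k\End_A(S)$ has nilpotents, $\overline{k}\otimes_k S$ is not semisimple, and vanishing of $\Ext^{>m}_{\overline{A}}(\overline{k}\otimes_k S,\overline{A})$ no longer transfers to the simple $\overline{A}$-constituents of $\overline{k}\otimes_k S$. So as written the proposal only works under an unstated extra hypothesis (perfectness of $k$, or splitness of $A$).

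The paper avoids this entirely. It rewrites $\injdim A$ as $\pdim_A D_k A$ (equivalently, works with an injective coresolution of the regular module) and simply observes that $\overline{k}\otimes_k-$ is exact and carries projectives to projectives (dually, injectives to injectives, because $\overline{k}\otimes_k D_k A\cong D_{\overline{k}}\overline{A}$). Hence a length-$m$ resolution base-changes to a length-$m$ resolution, giving $\injdim\overline{A}\le\injdim A$ with no reference to simple modules and no hypothesis on $k$; the reverse inequality is then obtained exactly by your faithful-flatness argument. I would replace the simple-module detour with this direct base change of resolutions.
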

\begin{proof}
	See Proposition 2.1 of \cite{zbMATH03785104}.
\end{proof}

We can also make use of this lemma  to generalise Theorem \ref{thm3:3:2} and the previous results in this section to arbitrary fields.

\begin{Theorem}\label{maintheoremA}
	Let $k$ be an arbitrary field with positive characteristic $p$. Then, the following holds.
\begin{enumerate}
	\item $\gldim S_k(t, 2p)=4(p-1)$ for every $t=p, \ldots, 2p-1$.
	\item $(S_k(p, 2p), V^{\otimes 2p})$ is a relative $4(p-1)$-Auslander pair.
	\item $\Lambda_k(p, 2p)$ is Iwanaga-Gorenstein, and $\findim \Lambda_k(p, 2p)=2p-4$ when $p>2$.
\end{enumerate}
\end{Theorem}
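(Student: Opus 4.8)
The plan is to transfer each assertion from an algebraically closed ground field to an arbitrary one, using the base-change invariance of injective dimension recorded in Lemma \ref{lemmainvarianceofinjectivedimension} together with the standard identifications $\overline{k}\otimes_k S_k(n,d)\cong S_{\overline{k}}(n,d)$ and $\overline{k}\otimes_k \Lambda_k(n,d)\cong \Lambda_{\overline{k}}(n,d)$; the latter holds because $\overline{k}\otimes_k(-)$ commutes with forming endomorphism rings of finite-dimensional modules and with passing to opposite algebras.

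For (1): since Schur algebras are quasi-hereditary over every field, $S_k(t,2p)$ has finite global dimension, so $\gldim S_k(t,2p)=\injdim_{S_k(t,2p)} S_k(t,2p)$ by Subsection \ref{Homological dimensions}. By Lemma \ref{lemmainvarianceofinjectivedimension} and the base-change identity for Schur algebras this equals $\injdim_{S_{\overline{k}}(t,2p)} S_{\overline{k}}(t,2p)=\gldim S_{\overline{k}}(t,2p)$, which is $4(p-1)$ by Corollary \ref{cor5dot3dot5}. For (2): combining (1) with Theorem \ref{maintheoremarbitraryfield} gives $\gldim S_k(p,2p)=4(p-1)=V^{\otimes 2p}\domdim_{S_k(p,2p)} S_k(p,2p)$, so in particular $\gldim S_k(p,2p)\leq 4(p-1)\leq V^{\otimes 2p}\domdim_{S_k(p,2p)} S_k(p,2p)$, which is exactly the condition defining a relative $4(p-1)$-Auslander pair.

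For (3), assume $p>2$. Over $\overline{k}$ the algebra $\Lambda_{\overline{k}}(p,2p)$ is Iwanaga-Gorenstein: this is Theorem \ref{thm3:3:2} for $p>3$, and for $p=3$ the same argument works using the projective resolutions of the non-injective projective $\Lambda_0$-modules displayed in Subsection \ref{sec4dot3}, whose interior terms are projective-injective, together with the simple-preserving duality. Hence $\injdim_{\Lambda_{\overline{k}}(p,2p)} \Lambda_{\overline{k}}(p,2p)$ and $\injdim_{\Lambda_{\overline{k}}(p,2p)^{op}} \Lambda_{\overline{k}}(p,2p)^{op}$ are finite, and by Lemma \ref{lemmainvarianceofinjectivedimension} applied to $\Lambda_k(p,2p)$ and to $\Lambda_k(p,2p)^{op}$ the same holds over $k$; thus $\Lambda_k(p,2p)$ is Iwanaga-Gorenstein. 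For the finitistic dimension we invoke $\findim A=\injdim_A A$ for Iwanaga-Gorenstein $A$ (Subsection \ref{Homological dimensions}) and Lemma \ref{lemmainvarianceofinjectivedimension} once more to reduce to computing $\findim\Lambda_{\overline{k}}(p,2p)$, which is the maximum of the finitistic dimensions of its block components. The principal block contributes $2p-4$ by Corollary \ref{finitisticlamdbazero} (and the analogous count in Subsection \ref{sec4dot3} when $p=3$), whereas every non-principal block is quasi-hereditary with at most $p-1$ simple modules (see the proof of Theorem \ref{thm3:3:2} and the block analysis in Subsection \ref{globaldimensionofSp2p}), hence has global dimension, and therefore finitistic dimension, at most $2(p-2)=2p-4$; so $\findim\Lambda_{\overline{k}}(p,2p)=2p-4$.

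There is no deep obstacle here; the points requiring care are the precise statement of $\overline{k}\otimes_k\Lambda_k(p,2p)\cong\Lambda_{\overline{k}}(p,2p)$ and of its opposite-algebra version, and, in (3), the bookkeeping that assembles $\findim\Lambda_{\overline{k}}(p,2p)$ from the block decomposition -- confirming that the principal block attains $2p-4$ and that no non-principal block exceeds it, with the $p=3$ instance supplied by Subsection \ref{sec4dot3} rather than directly by Theorem \ref{thm3:3:2}.
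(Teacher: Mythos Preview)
Your proposal is correct and follows essentially the same route as the paper: reduce each statement to the algebraically closed case via Lemma \ref{lemmainvarianceofinjectivedimension} and the base-change identifications, then invoke Corollary \ref{cor5dot3dot5}, Theorem \ref{maintheoremarbitraryfield}, Theorem \ref{thm3:3:2}, Corollary \ref{finitisticlamdbazero}, and the material of Subsection \ref{sec4dot3}. The only difference is cosmetic: in part (3) you spell out the block-by-block comparison (principal block contributes $2p-4$, non-principal quasi-hereditary blocks contribute at most $2(p-2)$) to assemble $\findim\Lambda_{\overline{k}}(p,2p)$, whereas the paper cites Theorem \ref{thm3:3:2} and Corollary \ref{finitisticlamdbazero} more tersely and leaves this bookkeeping implicit.
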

\begin{proof} Let $\overline{k}$ be the algebraic closure of $k$.
	Since Schur algebras (over a field) have finite global dimension, we have $\gldim S_k(t, 2p)=\injdim S_k(t, 2p)$. By Lemma \ref{lemmainvarianceofinjectivedimension} and Corollary \ref{cor5dot3dot5},
	$\injdim S_k(t, 2p)=\injdim \overline{k}\otimes_k S_k(t, 2p)=\injdim S_{\overline{k}}(t, 2p)=4(p-1).$ 
	
	Part (2) follows from part (1) together with Theorem \ref{maintheoremarbitraryfield}.
	
	For part (3), recall from Theorem \ref{thm3:3:2} and Corollary \ref{finitisticlamdbazero} that $\L_{\overline{k}}(p, 2p)$ is Iwanaga-Gorenstein and $\findim \L_{\overline{k}}(p, 2p)=2p-4$ when $p>3$. Using the same arguments and the resolutions in Equations (\ref{equation18}), (\ref{equation19}) and (\ref{equation20}) we obtain that $\La_{\overline{k}}(3, 6)$ has injective dimension two (both as left and right module) when $p=3$. 
	
	 Observe that 
	 \begin{align}
	 	\overline{k}\otimes_k\La_k(p, 2p)&\cong \overline{k}\otimes_k \End_{S_k(p, 2p)}((k^p)^{\otimes 2p})^{op}\cong \End_{\overline{k}\otimes_k S_k(p, 2p)}(\overline{k}\otimes_k(k^p)^{\otimes 2p})^{op} \\&\cong \End_{S_{\overline{k}}(p, 2p)}(({\overline{k}}^p)^{\otimes 2p})^{op}=\La_{\overline{k}}(p, 2p).
	 \end{align} Assume that $p>2$. Since $\Lambda_{\overline{k}}(p, 2p)$ has finite injective dimension (as a left and as a right module), it follows by Lemma \ref{lemmainvarianceofinjectivedimension} that  $\Lambda_k(p, 2p)$ is Iwanaga-Gorenstein and $\findim \La_k(p, 2p)=\injdim \La_k(p, 2p)=\injdim \La_{\overline{k}}(p, 2p)=2p-4$.
\end{proof}

	\section*{Acknowledgements} 
	
This work began during the second author’s visit to Stuttgart in 2024 and continued during the first author’s visit to Oxford in 2025. The authors are grateful for the hospitality and support they received. The authors thank Stephen Donkin for bringing the work developed in \cite{zbMATH02005565} to their attention. The first-named author thanks Kevin Schlegel for useful discussions about Lemma 5.3.6. The authors thank the referees for their helpful comments and suggestions.


\bibliographystyle{alphaurl}
\bibliography{ref}

\end{document}